\DeclareMathOperator{\im}{Im}
\DeclareMathOperator{\re}{Re}
\DeclareMathOperator{\supp}{supp}
\newtheorem{theorem}{Theorem}[section]
\theoremstyle{plain}
\newtheorem*{theorem*}{Main Theorem}
\newtheorem{corollary}[theorem]{Corollary}
\newtheorem{definition}[theorem]{Definition}
\newtheorem{lemma}[theorem]{Lemma}
\newtheorem{proposition}[theorem]{Proposition}
\newtheorem{remark}{Remark}
\begin{document}

\title{On the Gluing Construction of Translating Solitons to Lagrangian Mean Curvature Flow}
\author{WEI-BO SU}
\affil{\it Academia Sinica, Taipei, Taiwan\\
wbsu@gate.sinica.edu.tw}
\date{}
\maketitle
\begin{abstract}
We construct Lagrangian translating solitons by desingularizing the intersection points between Lagrangian Grim Reaper cylinders with the same phase using special Lagrangian Lawlor necks. The resulting Lagrangian translating solitons could have arbitrarily many ends and non-contractible loops.
\end{abstract}

\tableofcontents

\section{Introduction}
Let $(X^{2m}, J, \omega, \Omega)$ be a Calabi--Yau $m$-fold. It is conjectured that one should be able to define the derived Fukaya category $D^{b}\mathcal{F}(X)$ of Lagrangian submanifolds in $X$ and there should exist a Bridgeland stability condition on $D^{b}\mathcal{F}(X)$ so that the stable objects are represented by {\it special Lagrangians}, namely, those real $m$-submanifolds where $\omega$ and $\im\Omega$ restrict to zero. Special Lagrangians are calibrated submanifolds due to Harvey--Lawson \cite{HL}, and hence they are volume-minimizing. Joyce \cite{JoyceConj} conjectured that such stability condition could be found by {\it Lagrangian mean curvature flow (LMCF)}, that is, an evolution of Lagrangian submanifolds $F_{t}:L\to X$ obeying the rule
\begin{align*}
    \frac{d}{dt}F_{t} = H_{F_{t}},
\end{align*}
where $H_{F_{t}}$ is the mean curvature vector of $F_{t}(L)$ in $X$. LMCF is the negative gradient flow of the volume functional, so it is hoped that LMCF will deform a Lagrangian submanifold (possibly with finite times of ``surgeries'') to a union of special Lagrangians which realizes the Harder-Narasimhan filtration of the stability condition. It is therefore important to study LMCF and its singularity formation. When the initial Lagrangian submanifold $F:L\to X$ is {\it zero-Maslov}, that is, $F^{*}\Omega = e^{i\theta_{L}}dV_{L}$ for some real-valued phase function $\theta_{L}:L\to\mathbb{R}$, Neves \cite{NevesSing} showed that LMCF starting from $F:L\to X$ will not develop Type I (fast forming) singularities, so in this case one should focus on  Type II (slowly forming) singularities and by a blow-up procedure, such kind of singularities are modeled on {\it eternal solutions} in $\mathbb{C}^{m}$, namely, the solutions to LMCF in $\mathbb{C}^{m}$ which are defined for all $t\in\mathbb{R}$. 

One major class of eternal solutions to LMCF in $\mathbb{C}^{m}$ is the translating solutions, that is, solutions to LMCF of the form $F_{t} = F - tT$, where $F$ is a Lagrangian immersion and  $T\in\mathbb{C}^{m}$ is a fixed vector. The Lagrangian immersion $F:L\to\mathbb{C}^{m}$ should satisfy the equation
\begin{align}\label{translatoreq}
    H_{F} + T^{\perp} = 0,
\end{align}
and will be called the {\it translating soliton} for LMCF. Perhaps the simplest nontrivial example of  Lagrangian translating soliton is the {\it Grim Reaper curve} in $\mathbb{C}$, which can be parametrized by
\begin{align*}
\gamma_{0}:(-\pi/2, \pi/2)\to\mathbb{C},\quad\gamma_{0}(x)=-\log\cos(x) - i(x-\pi/2).
\end{align*}
We can then construct $m$-dimesional Lagrangian translating soliton by Cartesian product the Grim Reaper curve with $\mathbb{R}^{m-1}$, which we shall call it a {\it Grim Reaper cylinder}. More precisely, we have a Lagrangian translating soliton $L_{0}$ in $\mathbb{C}^{m}$ defined by
\begin{align}\label{basicGR}
    L_{0} := \{(x_{1}, \cdots, x_{m-1}, \gamma_{0}(x_{m}))\:|\:(x_{1}, \cdots, x_{m-1}, x_{m})\in\mathbb{R}^{m-1}\times(-\pi/2, \pi/2)\}.
\end{align}
It has been shown that Grim Reaper cylinders arise in the singularity formation of LMCF \cite{SmoWhitney}. More complicated examples of Lagrangian translating solitons are constructed by Castro--Lerma \cite{CL} in $\mathbb{C}^{2}$, and  Joyce--Lee--Tsui \cite{JLT} in $\mathbb{C}^{m}$ for $m\geq 2$. In particular, the $2$-dimensional examples constructed by Castro--Lerma are generated by two plane curves that satisfy the curve shortening flow, and the example constructed by Joyce--Lee--Tsui are {\it almost-calibrated}, namely, the total variation of the phase function $\theta_{L}$ is less than $\pi$. It is still unknown whether these examples can arise as blow-up limits of finite-time singularities of LMCF.

In the present paper, we aim to construct new examples of Lagrangian translating solitons by {\it gluing construction}. More precisely, we first construct Lagrangian translating solitons {\it with isolated conical singularities} by intersecting suitably rotated Grim Reaper cylinders (see Figure 4). The tangent cones of the intersection points would satisfy an angle condition so that there is a special Lagrangian {\it ``Lawlor neck''}, unique up to scaling, asymptotic to each tangent cone. One then removes a small neighborhood around each intersection point and replace it with a suitably scaled Lawlor neck. The resulting submanifold, the {\it ``approximate solution''}, is Lagrangian, but does not satisfy $(\ref{translatoreq})$, so it is not a translating soliton. However, one can show that the approximate solution is close to being a translating soliton in a precise sense provided the Lawlor necks are {\it small}, so we can perturb such an approximate solution to yield a Lagrangian translating soliton. 

We now describe our building blocks in more details in order to state our main result. Given $\mathbf{\phi} = (\phi_{1}, \cdots, \phi_{m})\in(0, \pi)^{m}$ and $\lambda\in\mathbb{R}$, define an automorphism
\begin{align}\label{AffineTrans}
    &P_{(\mathbf{\phi},  \lambda)}:\mathbb{C}^{m}\to\mathbb{C}^{m}, \;\mbox{ where}\nonumber\\
    &P_{(\mathbf{\phi}, \lambda)}(z_{1}, \cdots, z_{m}) := (e^{i\phi_{1}}z_{1}, \cdots, e^{i\phi_{m-1}}z_{m-1}, z_{m} + (\lambda + i\phi_{m})).
\end{align}
Define the set of admissible angles
\begin{align}\label{AdmissAngles}
    \mathcal{A} := \{\mathbf{\phi} = (\phi_{1}, \cdots, \phi_{m})\in (0, \pi)^{m}\:|\:\sum_{j=1}^{m}\phi_{j} = \pi\}.
\end{align}
Let $L_{0}$ be the Lagrangian translating soliton as defined in $(\ref{basicGR})$. If $(\mathbf{\phi}, \lambda)\in (0, \pi)^{m}\times\mathbb{R}$ such that $\mathbf{\phi}\in\mathcal{A}$, then there exists a unique intersection point $p\in\mathbb{C}^{m}$ of $L_{0}$ and $P_{(\mathbf{\phi}, \lambda)}(L_{0})$ such that the tangent cone $T_{p}L_{0}\cup T_{p}P_{(\mathbf{\phi}, \lambda)}(L_{0})$ satisfies the {\it angle criterion}, namely, up to a $U(m)$-rotation, $T_{p}L_{0}\cup T_{p}P_{(\mathbf{\phi}, \lambda)}(L_{0})$ is identified with the pair of planes $\Pi_{0}\cup\Pi_{\phi}$, where
\begin{align*}
    \Pi_{0} = \{(x_{1}, \cdots, x_{m})\:|\:x_{j}\in\mathbb{R}\}\quad\Pi_{\phi} = \{(e^{i\phi_{1}}x_{1}, \cdots, e^{i\phi_{m}}x_{m})\:|\:x_{j}\in\mathbb{R}\}
\end{align*}
with $\sum_{j=1}^{m}\phi_{j} = \pi$. The Lawlor neck $N:S^{m-1}\times\mathbb{R}\to\mathbb{C}^{m}$ is the unique (up to scaling) smooth, exact, embedded special Lagrangian submanifold asymptotic to $\Pi_{0}\cup\Pi_{\phi}$. For $t>0$, the family $tN$ converges weakly to $\Pi_{0}\cup\Pi_{\phi}$ as $t\to 0$. Hence, by taking $t$ sufficiently small, we can make $tN$ as close to the tangent cone at $p$ as we want. 
The main result of this paper may be stated as follows.
\begin{theorem*}
Given $(\mathbf{\phi}, \lambda)$ with $\mathbf{\phi}\in\mathcal{A}$. There exists $\delta>0$ a $1$-parameter family of smooth, embedded Lagrangian translating solitons $L_{t}$, $t\in (0, \delta)$, diffeomorphic to $S^{m-1}\times\mathbb{R}$, such that $L_{t}$ converges weakly to $L_{0}\cup P_{(\mathbf{\phi}, \lambda)}(L_{0})$ as $t\to 0$.
\end{theorem*}

\begin{center}
\includegraphics[width=13cm]{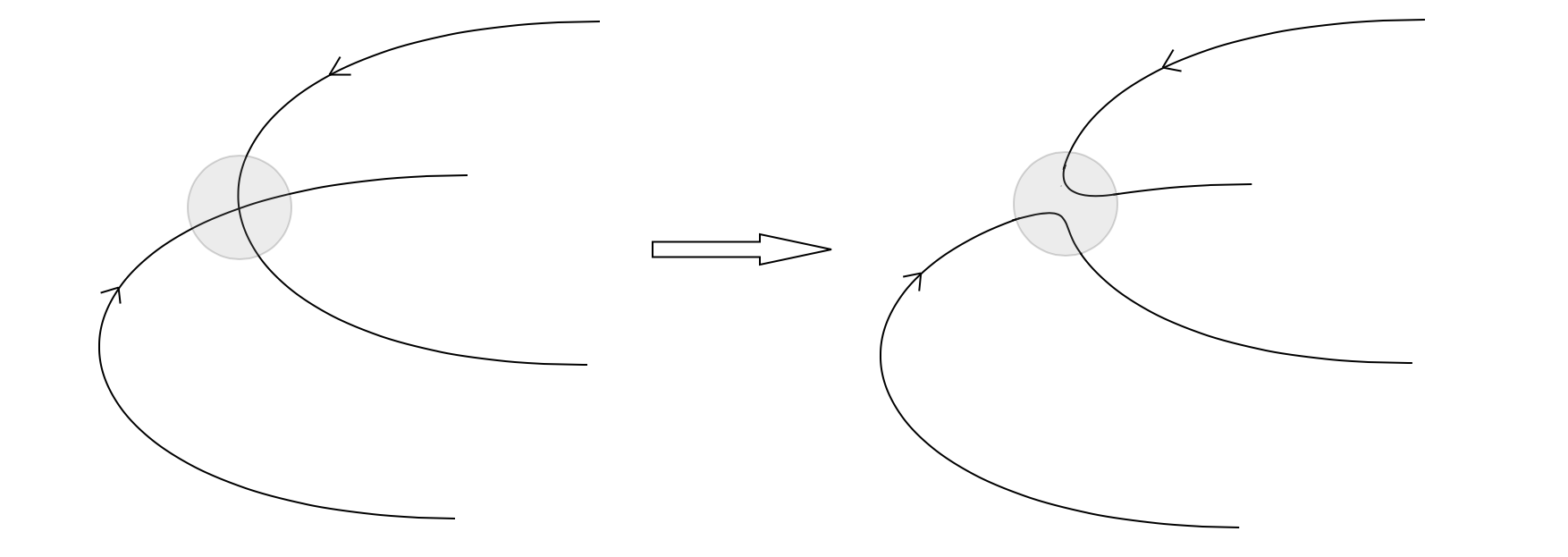}\\
\small{Figure 1. Schematic picture of resolution of an isolated singular point.}
\end{center}

By composing the maps $P_{(\mathbf{\phi}, \lambda)}$ with different $(\phi, \lambda)$ with $\phi\in\mathcal{A}$ finiely many times, we may construct more complicated Lagrangian translating solitons with isolated conical singulairities such that the tangent cone at each singularity is a pair of planes satisfies the angle criterion. By the same gluing technique, we may construct Lagrangian translating solitons with complicated topologies. In particular,
\begin{corollary}
There exist smooth Lagrangian translating solitons in $\mathbb{C}^{m}$, $m\geq 3$, constructed by desingularizing intersection points of Grim Reaper cylinders, with arbitrarily many ends and non-contractible loops.
\end{corollary}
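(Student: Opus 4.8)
The plan is to iterate the single--point desingularization of the Main Theorem along finite configurations of Grim Reaper cylinders whose combinatorial pattern dictates the topology: a tree--like pattern produces arbitrarily many ends, while a pattern containing a cycle produces a non--contractible loop. Throughout, the relevant cylinders are the images $Q(L_{0})$ with $Q$ a finite composition of maps $P_{(\mathbf{\phi},\lambda)}$. Since each $P_{(\mathbf{\phi},\lambda)}$ is an affine isometry of $\mathbb{C}^{m}$ that rotates only the flat coordinates $z_{1},\dots,z_{m-1}$ of $L_{0}$ and translates $z_{m}$, its differential fixes the translation vector $T$ of $L_{0}$ (which lies in the $z_{m}$--plane), so every $Q(L_{0})$ is again a Lagrangian translating soliton for the \emph{same} $T$; hence any finite union $\Sigma=\bigcup_{i}Q_{i}(L_{0})$ is a Lagrangian translating soliton for $T$, possibly singular. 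Moreover two such cylinders meet in at most one point: equating the $z_{l}$--components ($l<m$) forces them to the origin, since a line and a nontrivial rotation of it meet only there, and the two translated Grim Reaper curves in the $z_{m}$--plane then cross at most once. One checks, exactly as in the setup preceding the Main Theorem, that the tangent cone at such an intersection point is, up to $U(m)$, a pair of Lagrangian planes $\Pi_{0}\cup\Pi_{\mathbf{\psi}}$ whose characteristic angles $\psi_{l}$ are read off from the accumulated rotation angles of $Q_{i}Q_{j}^{-1}$ and, in the last slot, from the imaginary part of the relative $z_{m}$--translation; the point can be desingularized by a small Lawlor neck precisely when $\mathbf{\psi}\in\mathcal{A}$, i.e. when each $\psi_{l}\in(0,\pi)$ and $\sum_{l}\psi_{l}=\pi$.

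For the \emph{tree} configurations: starting from $C_{0}=L_{0}$, apply maps $P^{(i)}$ with angular parameters in $\mathcal{A}$ to define $C_{i}=P^{(i)}(C_{j(i)})$ for a chosen parent $j(i)<i$. A short computation (using that the rotations are diagonal and the $z_{m}$--translations commute) shows $C_{j(i)}^{-1}\!\circ C_{i}=P^{(i)}_{(\mathbf{\phi}^{(i)},\lambda^{(i)})}$ up to isometry, so every newly created pair $(C_{j(i)},C_{i})$ automatically has admissible tangent cone; taking the $\lambda^{(i)}$ large in modulus keeps all non--adjacent cylinders disjoint. This gives $\Sigma_{N}$ with $N-1$ isolated conical singularities of the admissible type, asymptotic at infinity to the disjoint union of its $N$ cylinders, hence with $2N$ ends (each cylinder contributing two Grim Reaper ends). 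For \emph{non--contractible loops} one needs a cycle in the bipartite incidence graph ``cylinders $\leftrightarrow$ necks''; since two cylinders admit at most one good intersection point, the shortest such cycle is a triangle $C_{a},C_{b},C_{c}$ with all three pairs good. With $C_{b}=P^{(1)}(L_{0})$ and $C_{c}=P^{(2)}(C_{b})$, $\mathbf{\phi}^{(1)},\mathbf{\phi}^{(2)}\in\mathcal{A}$, the pairs $(C_{a},C_{b})$ and $(C_{b},C_{c})$ are good, while the characteristic angles of $(C_{c},C_{a})$ are the componentwise sums $\phi^{(1)}_{l}+\phi^{(2)}_{l}$ reduced into $(0,\pi)$, so their total is $2\pi$ minus $\pi$ times the number of indices with $\phi^{(1)}_{l}+\phi^{(2)}_{l}>\pi$; choosing parameters with exactly one such index makes this total equal to $\pi$, so $(C_{c},C_{a})$ is good as well. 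This is possible precisely because $m\geq 3$ (in $\mathbb{C}^{3}$, take $\phi^{(1)}_{1}=\phi^{(2)}_{1}$ near $\pi$ and the remaining angles small); for $m=2$ the analogous bookkeeping cannot be arranged, but there a single Lawlor neck already yields $S^{1}\times\mathbb{R}$, which carries a non--contractible loop anyway, so the statement is only substantive for $m\geq 3$, where each neck $S^{m-1}\times\mathbb{R}$ is simply connected. Grafting such triangles into a tree, adjusting the translations to avoid spurious intersections, yields configurations $\Sigma$ with prescribed first Betti number and arbitrarily many ends.

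Given $\Sigma$, desingularize all of its finitely many conical singularities at once: excise a small ball around each and glue in a suitably scaled Lawlor neck $t_{i}N_{i}$, producing a smooth embedded Lagrangian approximate solution $\widetilde{L}_{t}$ whose translator error $H_{\widetilde{L}_{t}}+T^{\perp}$ is supported in the gluing regions and small in suitable weighted norms for small $t_{i}$. Because the singular points lie at a fixed positive distance from one another, the linearized operator stays uniformly invertible on the relevant weighted spaces (with bounds independent of the number of necks once the $t_{i}$ are small), so the same implicit--function / fixed--point argument as in the Main Theorem applies and produces, for small $t$, an exact smooth embedded Lagrangian translating soliton $L_{t}$ converging weakly to $\Sigma$ as $t\to 0$. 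Collapsing each cylinder--minus--balls to a vertex and each neck to an edge defines a map $L_{t}\to\mathcal{G}$ onto the incidence graph that is surjective on $\pi_{1}$ (and an isomorphism when $m\geq 3$, since then all pieces and their overlaps are simply connected), so a loop running once around a triangle of necks maps to a non--nullhomotopic loop in $\mathcal{G}$ and is therefore non--contractible in $L_{t}$; and $L_{t}$ inherits the ends of $\Sigma$. Letting $\Sigma$ range over the configurations above completes the proof.

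The analytic step is a routine extension of the Main Theorem, the gluing being local around each singular point and the necks uniformly separated; the genuine work is the combinatorial--geometric construction of the configurations $\Sigma$, and in particular the exhibition of cyclic patterns, built only from the maps $P_{(\mathbf{\phi},\lambda)}$, for which the characteristic--angle condition $\sum_{l}\psi_{l}=\pi$ holds at \emph{every} intersection point—including those that close up a loop—while no unwanted intersections occur. Verifying this global compatibility around a cycle is the one place where the hypothesis $m\geq 3$ really intervenes, and it is the expected main obstacle.
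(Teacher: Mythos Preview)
Your approach---compose maps $P_{(\mathbf{\phi},\lambda)}$ to assemble a $T$-finite configuration $\Sigma$ of Grim Reaper cylinders with admissible tangent cones at every intersection, then run the gluing argument simultaneously at all singular points---is exactly what the paper intends; indeed the paper offers no proof of the Corollary beyond the sentence immediately preceding it and the reference to Figures~3 and~4, so you have supplied substantially more detail than what is actually written there. In particular, your explicit triangle construction showing that a cycle of admissible intersections can be arranged for $m\geq 3$ (by forcing exactly one coordinate sum $\phi^{(1)}_{l}+\phi^{(2)}_{l}$ to exceed $\pi$, with that index necessarily among $l<m$ so the Grim Reaper curves in the $z_{m}$-plane still meet) fills a point the paper leaves entirely to pictures.

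One small correction: a Grim Reaper cylinder is diffeomorphic to $\mathbb{R}^{m}$, which for $m\geq 2$ has a single topological end, not two; hence a tree on $N$ cylinders yields $N$ ends rather than $2N$. This does not affect the conclusion, since $N$ is arbitrary.
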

 
The approximate solutions can also be constructed by gluing Lawlor necks to the intersection points of Grim Reaper cylinders and suitably rotated Lagrangian planes (see Figure 4), and that is indeed the author's original plan. However, the author fails to do the perturbation for this kind of approximate solutions since our weighted Sobolev spaces are not able to control the functions in the $T$-direction (see page 14). Because of this, we can only do the perturbation for the so-called {\it $T$-finite} approximate solution (see Definition $\ref{Tfinite}$), namely, those approximate solutions which do not extend to the infinity in $T$-direction.

The reason why the above gluing procedure can be applied to construct Lagrangian translating solitons is the following. In \cite{WBfmin}, we study the {\it $f$-special Lagrangians} in gradient steady K\"ahler--Ricci solitons. To be precise, let $(X, \omega, f)$ be a gradient steady K\"ahler--Ricci soliton, then a result of R. Bryant shows that there exists a holomorphic $(m, 0)$-form $\Omega_{f}$ such that $(X, \omega, \Omega_{f})$ is an {\it almost Calabi--Yau} structure satisfying
\begin{align*}
    e^{-f}\frac{\omega^{m}}{m!} = (-1)^{\frac{m(m-1)}{2}}(i/2)^{m}\Omega_{f}\wedge\overline{\Omega_{f}},
\end{align*}
and $f$-special Lagrangians with phase $\overline{\theta}$ are the $m$-submanifolds defined by the condition
\begin{align*}
    \omega\big|_{L} = \im(e^{-i\overline{\theta}}\Omega_{f})\big|_{L} = 0.
\end{align*}
Note that if $f$ is a constant, $(X, \omega, f)$ reduces to a Calabi--Yau structure. If $F:L\to X$ is $f$-special Lagrangian with an isolated singularity at $p\in X$, then in a small neighborhood $U\ni p$, the value of $f\big|_{U}$ is very close to $f(p)$, hence $f$ is almost a constant. Therefore we may use special Lagrangians in Calabi--Yaus to resolve the singularities of $f$-special Lagrangians, as long as the surgery region is small enough. Now a key observation is that, if we take $(X, \omega, f) = (\mathbb{C}^{m}, \omega_{0}, f(z) = 2\langle z, T\rangle)$, then one can show that $f$-special Lagrangians in this case are exactly the Lagrangian translating solitons satisfying $(\ref{translatoreq})$. Hence we may resolve the singularities of Lagrangian translating solitons by special Lagrangians in $\mathbb{C}^{m}$. Another viewpoint is that since the mean curvature vector $H_{F}$ scales like $H_{\lambda F} = H_{F}/\lambda$ for $\lambda>0$, if $F$ is a Lagrangian translating soliton, then
\begin{align*}
    H_{\lambda F} = \frac{1}{\lambda}H_{F} = -\frac{1}{\lambda}T^{\perp}.
\end{align*}
So we see that $\lambda F$ becomes closer and closer to a minimal submanifold as $\lambda\to\infty$. As a result, at a large scale, the Lagrangian translating solitons approximate special Lagrangians in $\mathbb{C}^{m}$.

The key issue when doing the perturbation argument is the {\it uniform invertibility} of the linearized operator. To describe the idea, let $\widetilde{L}_{t}$, $t\in(0, \delta)$ be the family of approximate solutions obtained by gluing in scaled Lawlor necks $tN$. Then we linearize the translator equation $(\ref{translatoreq})$ at each $\widetilde{L}_{t}$ to get a family of linear operators $\mathcal{L}_{t}$. As a family of operators between ordinary function spaces like $C^{k, \alpha}$ or $W^{k, p}$, $\mathcal{L}_{t}$ is in general not Fredholm, or even if $\mathcal{L}_{t}$ is invertible for each $t$, the operator norm of $\mathcal{L}^{-1}_{t}$ is dependent on $t$ and often blow up as $t\to 0$, due to the lack of uniform curvature bound on $\widetilde{L}_{t}$. These difficulties could possibly get round by considering {\it weighted} function spaces. Roughly speaking, one uses the weight to compensate for the possible blow-up behavior of the functions causing by either noncompactness of the manifolds or blow-up of the curvature. In our case, we define the weighted Sobolev spaces $W^{k, p}_{\beta, \gamma, t}$ on $\widetilde{L}_{t}$ with the norm 
\begin{align*}
\|u\|_{W^{k, p}_{\beta, \gamma, t}} := \left(\sum_{j=0}^{k}\int_{\widetilde{L}_{t}}|e^{\frac{\beta f_{t}}{2}}\rho^{-\gamma+j}_{t}\nabla^{j}u|^{p}\:\rho^{-m}_{t}\:dV_{g_{t}}\right)^{\frac{1}{p}}.
\end{align*}
Notice that these weighted Sobolev spaces are in fact a kind of ``interpolation'' between two different weighted Sobolev spaces of functions supported on different parts of the apporximate solutions, one is $W^{k, p}_{\beta}$ with weight $e^{\beta f_{t}}$ defined on the Grim Reaper ends, and another one is the usual weighted Sobolev spaces $W^{k, p}_{\gamma, t}$ with weight $\rho_{t}$ defined on asymptotically conical Riemannian manifolds. Hence, roughly speaking, the weight function $e^{\beta f_{t}}$ governs the behavior of the functions near spatial infinity, and the weight function $\rho_{t}$ controls the functions on the scaled necks $tN$. Using these weighted Sobolev spaces, we are able to obtain uniform invertibility result for $\mathcal{L}_{t}$ (Proposition $\ref{LinearIsom}$) by first show the uniform invertibility of $\mathcal{L}_{t}$ for functions supported on either Grim Reaper ends and necks using the weighted spaces $W^{k, p}_{\beta}$ and $W^{k, p}_{\gamma, t}$, respectively, and then by a cut-off function argument by Pacini {\cite{PaciniUnifEst}} to achieve uniform invertibility on the whole manifold.

The gluing construction has been proven to be useful for constructing geometric objects satisfying some nonlinear elliptic PDEs. The most relevant results to this paper are the gluing construction of compact special Lagrangians in almost Calabi--Yau manifolds by Joyce \cite{Joyce3}, and the gluing construction of special Lagrangian conifolds in $\mathbb{C}^{m}$ by Pacini \cite{PaciniGlue}. Besides the results in the Lagrangian category, there are gluing constructions for $2$-dimensional translating solitons in $\mathbb{R}^{3}$, by desingularizing the intersection curve of some known examples of translating solitons by the Scherk surface, for instance, see \cites{Trident, Transsurface2013, Transsurface2015, Transsurface2017}. There are also gluing constructions for steady K\"ahler--Ricci solitons \cites{conlon2020steady, biquard2017steady}.

This paper is organized as follows. In section 2, we review the geometry of Lagrangian translating solitons. In section 3, we introduce the building blocks for our gluing construction and construct examples of Lagrangian translating solitons with isolated conical singularities by intersecting Grim Reaper cylinders. In section 4, we define the approximate solutions and estimate the error term. Section 5 and 6 are devoted to the perturbation argument. Then we prove the uniform invertibility of the linearized operator in section 5. In section 6, we estimate the quadratic term and solve the perturbation problem using Fix Point Theorem. 

We end this section with some remarks on the abuse of notations. Let $F: L\to X$ be an immersed submanifold. We will sometimes omit the immersion $F$ but just saying that $L$ is a submanifold of $X$. We will also not distinguish different constants in a sequence of estimates if they do not depend on the relevant quantities.

\subsection*{Acknowledgement}
The author would like to thank Dominic Joyce for suggesting him to think about gluing construction for Lagrangian translating solitons. He also wants to thank Yng--Ing Lee for stimulating discussions.

\section{$f$-special Lagrangian Geometry}
\subsection{Preliminaries}

We briefly review the relevant theory of Lagrangian submanifolds in almost Calabi--Yau manifolds and fix notations.

Let $X$ be a real $2m$-dimensional manifold, $m\geq 1$. Recall that a quadruple $(X, J, \omega, \Omega_{f})$ is called an {\it almost Calabi--Yau $m$-fold} if $(X, J, \omega)$ is a K\"ahler manifold and $\Omega_{f}$ is a nonvanishing holomorphic $(m, 0)$-form satisfying
\begin{align}\label{ACY}
    e^{-f}\:\frac{\omega^{m}}{m!} = (-1)^{\frac{m(m-1)}{2}}\left(\frac{i}{2}\right)^{m}\Omega\wedge\overline{\Omega}
\end{align}
for some smooth function $f:X\to\mathbb{R}$. If $f \equiv 0$, then $(X, J, \omega, \Omega_{0})$ is called a {\it Calabi--Yau $m$-fold}. In this case, we denote $\Omega := \Omega_{0}$. 

A real $m$-dimensional immersed submanifold $F:L\to X$ is a {\it Lagrangian } submanifold if 
\begin{align}\label{Lag}
    F^{*}\omega = 0.
\end{align}
From this condition and $(\ref{ACY})$, there is a (multi-valued) function $\theta_{L}: L\to\mathbb{R}/2\pi\mathbb{Z}$ so that
\begin{align}\label{fcalibrated}
    F^{*}\Omega = e^{i\theta_{L}}\:e^{-\frac{F^{*}f}{2}}dV_{g},
\end{align}
where $dV_{g}$ is the volume form of the induced metric $g$ on $L$. $\theta_{L}$ is called the {\it Lagrangian angle} of $L$. If $\theta_{L}$ can be lifted to a single-valued function $\theta_{L}:L\to\mathbb{R}$, then the Lagrangian submanifold $L$ is said to be of {\it zero Maslov class}. In any case, $d\theta_{L}$ is a well-defined $1$-form on $L$. An important fact is that this $1$-form  $d\theta_{L}$ is dual to the {\it generalised} mean curvature of $L$, which we are going to explain. Let $H_{L}$ be the mean curvature vector of the Lagrangian immersion $F:L\to X$, then the {\it generalised mean curvature vector} $K_{L}$ is defined by $K_{L} := H_{L} + (\frac{1}{2}\overline{\nabla}f)^{\perp}$, where $\overline{\nabla}$ denotes the gradient of the ambient K\"ahler metric $\overline{g} := \omega(\cdot, J\cdot)$ and $\perp$ denotes the orthogonal projection to the normal bundle of $L$. Then we have the identity
\begin{align}
    d\theta_{L} = F^{*}\iota_{K_{L}}\omega,
\end{align}
or equivalently, $K_{L} = JF_{*}(\nabla\theta_{L})$, where $\nabla$ here denotes the tangential part of $\overline{\nabla}$ on $L$. Therefore, Lagrangian submanifolds with constant Lagrangian angle $\theta_{L} = \overline{\theta}$ have vanishing generalised mean curvature. In this case, we say $F:L\to X$ is an {\it $f$-special Lagrangian submanifold with phase $\overline{\theta}$}. 

From $(\ref{fcalibrated})$, if $F:L\to X$ is special Lagrangian with phase $\overline{\theta}$, then $F^{*}\re(e^{-i\overline{\theta}}\Omega) = e^{-\frac{F^{*}f}{2}}dV_{L}$. In other words, $F:L\to X$ is {\it f-calibrated} by $\re (e^{-i\overline{\theta}}\Omega_{f})$ in the sense of \cite{WBfmin}. It follows that any compact subset $K\subset L$ of an  $f$-special Lagrangian submanifold $F:L\to X$ minimizes the $f$-volume $V_{f}(K):=\int_{K}e^{-\frac{f}{2}}dV_{L}$ homologically. If $f\equiv 0$, that is, if $(X, J, \omega, \Omega)$ is Calabi--Yau, then $f$-special Lagrangians reduce to the usual {\it special Lagrangians}.

\begin{remark}
In Joyce's terminology (see \cite[Definition   3.5]{JoyceCS1}), $f$-special Lagrangians are simply called special Lagrangians. However, we think it would be clearer to the readers if we emphasize the ``weight'' $f$. 
\end{remark}

\subsection{Lagrangian translating solitons}
We now consider $\mathbb{C}^{m}$ equipped with standard Euclidean metric $g_{0}$, complex structure $J$, K\"ahler form $\omega_{0}$, and holomorphic $(m, 0)$-form $\Omega = dz_{1}\wedge\cdots\wedge dz_{m}$. Then $(\mathbb{C}^{m}, J, \omega_{0}, \Omega)$ is Calabi--Yau.
\begin{definition}
An immersed Lagrangian submanifold $F:L\to\mathbb{C}^{m}$ is a Lagrangian translating soliton if 
\begin{align*}
H_{F} + T^{\perp} = 0
\end{align*}
for some fixed vector $T\in\mathbb{C}^{m}$.
\end{definition}
Let $\theta_{L}:L\to\mathbb{R}/2\pi\mathbb{Z}$ be the Lagrangian angle of $F$ with respect to $\Omega$, then by Lagrangian condition,
\begin{align*}
\begin{array}{rcl}
H_{F} + T^{\perp} = 0 &\Longleftrightarrow & J\nabla\theta_{L} + T^{\perp} = 0\\
&\Longleftrightarrow & \nabla\theta_{L} - (JT)^{T} = 0\\
&\Longleftrightarrow & \nabla\theta_{L} - \nabla\langle F, JT\rangle = 0.
\end{array}
\end{align*}
So $F$ is a Lagrangian translating soliton if and only if $F$ is Lagrangian and 
\begin{align}\label{TLS}
\theta_{L} - \langle F, JT\rangle = \theta_{0}
\end{align}
for some constant $\theta_{0}\in\mathbb{R}$. We will call the constant $\theta_{0}$ the {\it phase} of $F$. This terminology is motivated by the following observation in \cite{WBfmin}. Let   
\begin{align*}
\Omega_{f}:=e^{-\frac{f}{2} - i\langle z, JT \rangle}\Omega,
\end{align*}
where $f(z) = 2\langle z, T\rangle$. Then $\Omega_{f}$ is a holomorphic $(m, 0)$-form on $\mathbb{C}^{m}$ and satisfies $(\ref{ACY})$. Hence $(\mathbb{C}^{m}, J, \omega_{0}, \:\Omega_{f})$ is an {\it almost Calabi--Yau $m$-fold}. By Lagrangian condition we then have
\begin{align*}
F^{*}(\Omega_{f}) = e^{i(\theta(F) - \langle F, JT\rangle)}e^{-\frac{F^{*}f}{2}}dV_{g} = e^{i\theta_{0}}(e^{-\frac{F^{*}f}{2}}dV_{g}).
\end{align*}
Therefore Lagrangian translating solitons with phase $\theta_{0}$ are exactly the $f$-special Lagrangians  with phase $\theta_{0}$.

Given $T\in\mathbb{C}^{m}$, we will distinguish the submanifolds $F:L\to\mathbb{C}^{m}$ into two classes.
\begin{definition}\label{Tfinite}
Given $T\in\mathbb{C}^{m}$, we say a submanifold $F:L\to\mathbb{C}^{m}$ is $T$-finite if
\begin{align*}
    \langle F(x), T\rangle< +\infty,
\end{align*}
where $\langle\cdot, \cdot\rangle$ is the Euclidean metric on $\mathbb{C}^{m}$.
Otherwise, we say $F:L\to\mathbb{C}^{m}$ is $T$-infinite.
\end{definition}
We remark that if a Lagrangian submanifold  $F:L\to\mathbb{C}^{m}$ is $T$-finite, then the induced $f$-volume $e^{-\frac{F^{*}f}{2}}dV_{g} = e^{-\frac{2\langle F(x), T\rangle}{2}}dV_{g}$ has a {\it positive lower bound}.

\section{Building Blocks}
The building blocks of our gluing construction lie in the categories of Lagrangian translating solitons with conical singularities (CS) and asymptotically conical (AC) Lagrangian submanifolds. We first introduce the notion of CS and AC ends of a Riemannian manifold.

The underlying topological manifolds are of the following type.
\begin{definition}\label{cyl}
Let $X$ be a smooth $m$-manifold. We say $X$ is a manifold with cylindrical ends if there exists a compact subset $K\subset X$, a real number $S>0$, a smooth $(m-1)$-manifold $\Sigma$ without boundary, and a diffeomorphism $\phi$ so that $\phi:\Sigma\times(S, \infty)\stackrel{\simeq}{\to} X\setminus K$. Suppose $\Sigma$ is decomposed into connected components $\Sigma = \bigcup_{i=1}^{e}\Sigma_{i}$, then the corresponding component $E_{i}:=\phi(\Sigma_{i}\times [S, \infty))$ is called an end of $X$, and $\Sigma_{i}$ will be called the link of $E_{i}$.
\end{definition}

The Riemannian structure on the ends of these manifolds will be modelled on the {\it Riemannian cones}.
\begin{definition}
Let $(\Sigma, g_{\Sigma})$ be a smooth, compact, Riemannian $(m-1)$-manifold. Then the Riemannian cone over $\Sigma$ is the Riemannian $m$-manifold $C_{\Sigma} := (\Sigma\times(0, \infty), g_{C_{\Sigma}})$, where
\begin{align*}
    g_{C_{\Sigma}}(r, \sigma) := dr^{2} + r^{2}g_{\Sigma}(\sigma).
\end{align*}

\end{definition}

\subsection{CS and AC ends}

\begin{definition}\label{RiemACCS}
Let $(X, g)$ be a Riemannian manifold with $X$ being a manifold with cylindrical ends as in Definition $\ref{cyl}$, and let $E_{i}$ be one of the ends of $X$ with link $\Sigma_{i}$, $i = 1, \cdots, e$. For each $i$, let $C_{i} = (\Sigma_{i}\times (0, \infty), g_{C_{i}})$ be the Riemannian cone over $\Sigma_{i}$. 
\begin{description}
    \item[1.] We say $E_{i}$ is a conical singular (CS) end of $X$ with cone $C_{i}$ and rate $\mu_{i}$ if there exist $\check{\epsilon}_{i}>0$ and a diffeomorphism $\check{\phi}_{i}: \Sigma_{i}\times(0, \check{\epsilon}]\to E_{i}$ such that for all $k\geq 0$,
\begin{align*}
    |\nabla^{k}(\check{\phi}_{i}^{*}g - g_{C_{i}})| = O(r^{\mu_{i} - k})
\end{align*}
as $r\to 0$. 
    \item[2.] We say $E_{i}$ is an asymptotically conical (AC) end of $X$ with cone $C_{i}$ and rate $\lambda_{i}$ if there exist $\hat{R}_{i}>0$ and a diffeomorphism $\hat{\phi}_{i}:[\hat{R}_{i}, \infty)\times\Sigma_{i}\to E_{i}$ such that for all $k\geq 0$,
\begin{align*}
    |\nabla^{k}(\hat{\phi}^{*}g - g_{C_{i}})| = O(r^{\lambda_{i} - k})
\end{align*}
as $r\to \infty$. 
\item[3.] If all the ends of $X$ are CS (resp. AC), then $X$ is called a CS (resp. AC) Riemannian manifold.
\end{description}
Here, the connection $\nabla$ and the norm $|\cdot|$ are computed using the cone metric $g_{C_{i}} = dr^{2} + r^{2}g_{\Sigma_{i}}$.
\end{definition}

The CS and AC Riemannian structures that we will consider in this paper are induced from the {\it Lagrangian submanifolds with conical singularities} and {\it asymptotically conical Lagrangian subamnifolds} in $\mathbb{C}^{m}$. 

Let $\Sigma^{m-1}\subset S^{2m-1}$ be a Legendrian submanifold in the standard unit sphere $S^{2m-1}\subset \mathbb{C}^{m}$. Then the immersion $\iota_{C}: \Sigma\times (0, \infty)\to\mathbb{C}^{m}$ given by $\iota_{C}(\sigma, r) := r\sigma$ is a Lagrangian cone. We will denote $C := \iota_{C}(\Sigma\times (0, \infty))$. The submanifold $\Sigma\subset S^{2m-1}$ will be called the {\it link} of $C$. 

\begin{definition}
Let $L$ be a manifold with cylindrical ends $E_{1}, \cdots, E_{e}$, and $F:L\to\mathbb{C}^{m}$ be a Lagrangian submanifold. Suppose we have Lagrangian cones $C_{i} = \iota_{C_{i}}(\Sigma_{i}\times (0, \infty))$, $i = 1, \cdots, e$, in $\mathbb{C}^{m}$. 
\begin{itemize}
    \item[1.] We say $F:L\to\mathbb{C}^{m}$ is a Lagrangian submanifold with conical singularities $p_{1}, \cdots, p_{e}\in\mathbb{C}^{m}$ with cones $C_{1}, \cdots, C_{e}$ and rates $\mu_{1}, \cdots, \mu_{e}$ if for each $i = 1, \cdots, e$ there exist a $\check{\epsilon}_{i}>0$ and a diffeomorphism $\check{\phi}_{i}: \Sigma\times(0, \check{\epsilon}_{i}]\to E_{i}$ such that for all $k\geq 0$,
    \begin{align*}
        |\nabla^{k}(F\circ\check{\phi}_{i} - (\iota_{C_{i}} + p_{i}))| = O(r^{\mu_{i} - 1 - k})
    \end{align*}
    as $r\to 0$.
    \item[2.] We say $F:L\to\mathbb{C}^{m}$ is an asymptotically conical Lagrangian submanifold with cones $C_{1}, \cdots, C_{e}$, rates $\lambda_{1}, \cdots, \lambda_{e}$, and centers $q_{1}, \cdots, q_{e}$, if for each $i = 1, \cdots, e$ there exist $\hat{R}_{i}>0$ and a diffeomorphism $\hat{\phi}_{i}: \Sigma\times[\hat{R}_{i}, \infty]\to E_{i}$ such that for all $k\geq 0$,
    \begin{align*}
        |\nabla^{k}(F\circ\hat{\phi}_{i} - (\iota_{C_{i}} + q_{i}))| = O(r^{\lambda_{i} - 1 - k})
    \end{align*}
    as $r\to \infty$.
\end{itemize}
Here, the connection $\nabla$ and the norm $|\cdot|$ are computed using the induced cone metric $g_{C_{i}} = dr^{2} + r^{2}g_{\Sigma_{i}}$.
\end{definition}
For brevity, we will call Lagrangian submanifolds with conical singularities and asymptotically conical Lagrangian submanifolds {\it CS} and {\it AC} Lagrangian submanifolds, respectively. It is not hard to see if $F:L\to\mathbb{C}^{m}$ is a CS/AC Lagrangian submanifold, then $L$ together with the induced metric is a Riemannian manifold with CS/AC ends.

The Lagrangian submanifolds {\it with both AC and CS ends} are defined in an obvious manner. Such Lagrangian submanifolds in $\mathbb{C}^{m}$ are called {\it Lagrangian conifolds}. It is also possible that some ends of $L$ are neither CS nor AC ends.

\subsection{Lawlor necks}
We describe an important example of asymptotically conical special Lagrangians in $\mathbb{C}^{m}$ found by Lawlor \cite{Lawlor}, which will be used later to resolve conical singular points modeled on transversely intersecting planes.

Given Lagrangian planes $\Pi_{0}$, $\Pi_{1}$ in $\mathbb{C}^{m}$ so that $\Pi_{0}\cap\Pi_{1} = \{0\}$. Then up to an action of $U(m)$, we can put $\Pi_{0}$, $\Pi_{1}$ into the form
\begin{align*}
\Pi_{0} = \{(x_{1}, \cdots, x_{m})\:|\:x_{j}\in\mathbb{R}\},\quad\Pi_{1} = \{(x_{1}e^{i\phi_{1}}, \cdots, x_{m}e^{i\phi_{m}})\:|\:x_{j}\in\mathbb{R}\},
\end{align*}
where $\phi := (\phi_{1}, \cdots, \phi_{m})\in (0, \pi)^{m}$. 
\begin{definition}
We say the pair of Lagrangian planes $\Pi_{0}\cup\Pi_{1}$ satisfies the angle condition if
\begin{align*}
\phi_{1} +\cdots + \phi_{m} = \pi.
\end{align*}
\end{definition}
If $\Pi_{0}\cup\Pi_{1}$ satisfies the angle condition, then $\Pi_{0}\cup-\Pi_{1}$ is a special Lagrangian cone in $\mathbb{C}^{m}$. Lawlor \cite{Lawlor} found a family of nonsingular, asymptotically conical special Lagrangians $N: S^{m-1}\times\mathbb{R}\to\mathbb{C}^{m}$ resolving $\Pi_{0}\cup-\Pi_{1}$, which we are going to describe.

Let $m>2$. Given $a_{1}, \cdots, a_{m}>0$, define polynomials
\begin{align*}
p(x) = (1+a_{1}x^{2})\cdots(1+a_{m}x^{2}),\quad P(x) = \frac{p(x)}{x^{2}},
\end{align*}
and define $\phi_{1}, \cdots, \phi_{m}, A$ by
\begin{align*}
\phi_{j} = a_{j}\int_{-\infty}^{\infty}\frac{dx}{(1+a_{j}x^{2})\sqrt{P(x)}}, \quad A = \int_{-\infty}^{\infty}\frac{dx}{2\sqrt{P(x)}}.
\end{align*}
Note that $\phi_{1}+\cdots+\phi_{m} = \pi$, and this gives an $1$-$1$ correspondence between the $m$-tuples $(a_{1}, \cdots, a_{m})\in\mathbb{R}_{>0}$ and $(m+1)$-tuples $(\phi_{1}, \cdots, \phi_{m}, A)\in(0, \pi)^{m}\times\mathbb{R}_{>0}$ with $\phi_{1}+\cdots+\phi_{m} = \pi$. 

Define functions $\psi_{1}(y), \cdots, \psi_{m}(y)$ by
\begin{align*}
\psi_{j}(y) = a_{j}\int_{-\infty}^{y}\frac{dx}{(1+a_{j}x^{2})\sqrt{P(x)}},
\end{align*}
and functions $z_{j}:\mathbb{R}\to\mathbb{C}$ by $z_{j}(y):=e^{i\psi_{j}(y)}\sqrt{a^{-1}_{j}+ y^{2}}$ for $j=1, \cdots, m$. Now define a submanifold
\begin{align*}
N_{\phi, A} : S^{m-1}\times\mathbb{R}\to\mathbb{C}^{m}, \quad N_{\phi, A}(x_{1}, \cdots, x_{m}, y) := (x_{1}z_{1}(y), \cdots, x_{m}z_{m}(y)),
\end{align*}
where $x^{2}_{1}+\cdots x^{2}_{m} = 1$. Then $N_{\phi, A}$ defines an embedded, asymptotically conical special Lagrangian submanifold in $\mathbb{C}^{m}$, called the {\it Lawlor neck}. Imagi--Joyce--Oliveria dos Santos \cite{IJO} shows that the Lawlor neck is the unique smooth, exact, AC special Lagrangian with cone $C = \Pi_{0}\cup\Pi_{1}$ and rate $\lambda<2$.

\subsection{Grim Reaper cylinders}
A curve $\gamma: (-\pi/2, \pi/2)\to\mathbb{C}$ given by
\begin{align*}
    \gamma(s) := -\log\cos(s) - is
\end{align*}
is called the {\it Grim Reaper curve} (see Figure 2). It is a translating soliton of curve shortening flow, hence an example of translating soliton to LMCF for $m=1$. It is easy to see that the product immersion
\begin{align*}
    F_{\gamma}:\mathbb{R}^{m-1}\times(-\pi/2, \pi/2)\to\mathbb{C}^{m},\quad F_{\gamma}(x_{1}, \cdots, x_{m-1}, x_{m}) := (x_{1}, \cdots x_{m-1}, \gamma(x_{m}))
\end{align*}
defines a $T$-finite Lagrangian translating soliton in $\mathbb{C}^{m}$ diffeomorphic to $\mathbb{R}^{m}$, with $T = -e_{m}$, where $e_{m} = (0, \cdots, 0, 1)$. We will still call $F_{\gamma}$ a Grim Reaper cylinder in $\mathbb{C}^{m}$. We now show that $F_{\gamma}$ is an $f$-special Lagrangian with $f(z) := 2\langle z, -e_{m} \rangle$, where $e_{m} = (0, \cdots, 0, 1)\in\mathbb{C}^{m}$, and compute its phase.

Since $\gamma'(x_{m}) = \tan(x_{m}) - i$, the Lagrangian angle $\theta(F_{\gamma})$ of $F_{\gamma}$ with respect to $\Omega$ is given by $\arg(\tan(x_{m}) - i) = x_{m} - \frac{\pi}{2}$. On the other hand, $\langle F_{\gamma}, J(-e_{m})\rangle = x_{m}$. Thus we have
\begin{align*}
    \theta(F_{\gamma}) - \langle F_{\gamma}, J(-e_{m})\rangle = -\frac{\pi}{2},
\end{align*}
so $F_{\gamma}$ is an $f$-special Lagrangian with phase $-\frac{\pi}{2}$. If we translate $F_{\gamma}$ by $\phi\in\mathbb{R}$ in $\im(z_{m})$-direction, that is, set $F^{\phi}_{\gamma}:=F_{\gamma} + (0, \cdots, 0, i\phi)$, then the Lagrangian angle is unchanged and $\langle F^{\phi}_{\gamma}, J(-e_{m})\rangle = x_{m} - \phi$. Therefore $F^{\phi}_{\gamma}$ is $f$-special Lagrangian with phase $\phi - \frac{\pi}{2}$. In particular, $F^{\frac{\pi}{2}}_{\gamma}$ has phase $0$.

\begin{center}
    \includegraphics[width=0.5\textwidth]{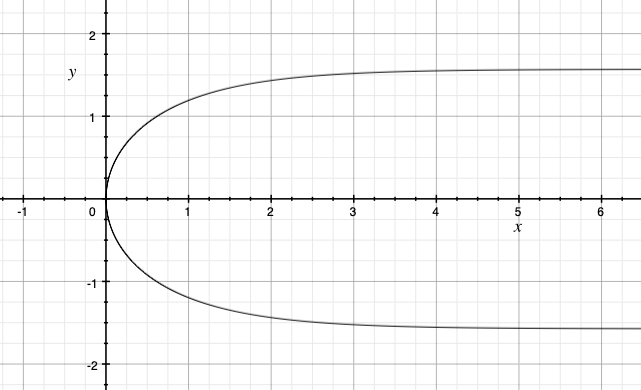}\\
    \small{Figure 2. a Grim Reaper curve $\gamma$}
\end{center}

\subsection{Lagrangian translating solitons with isolated conical singularities}
We now construct examples of Lagrangian translating solitons with isolated conical singularities by intersecting Grim Reaper cylinders with same phase. The tangent cones at each singularity is a pair of special Lagrangian planes with the same phase. This property allows us to use the Lawlor neck to resolve the singularities.

\ 

Consider the Grim Repaer $F_{0} := F^{\frac{\pi}{2}}_{\gamma}$ with phase $0$. Given $\phi = (\phi_{1}, \cdots, \phi_{m})\in (0, \pi)^{m}$ and $\lambda\in\mathbb{R}$,  define 
\begin{align*}
    &F^{\lambda}_{\phi} := P_{(\phi, \lambda)}\circ F_{0}: \mathbb{R}^{m-1}\times(-\pi/2, \pi/2)\to\mathbb{C}^{m},\\
    &F^{\lambda}_{\phi}(x_{1}, \cdots, x_{m}) = \left( x_{1}e^{i\phi_{1}}, \cdots, x_{m-1}e^{i\phi_{m-1}}, \lambda - \log\cos(x_{m}) - i(x_{m} - \pi/2) + i\phi_{m}\right),
\end{align*}
where $P_{\phi, \lambda}$ is the affine transformation defined in $(\ref{AffineTrans})$.
Then $L^{\lambda}_{\phi} := F^{\lambda}_{\phi}(\mathbb{R}^{m-1}\times(-\pi/2, \pi/2))$ is a Lagrangian translating soliton with phase $\sum_{j=1}^{m}\phi_{j}$. Thus, if $\phi\in\mathcal{A}$ is admissible (see $(\ref{AdmissAngles})$), that is, $\sum_{j=1}^{m}\phi_{j} = \pi$, then $-L^{\lambda}_{\phi}$ is a Lagrangian translating soliton with phase $0$, where we use $-L^{\lambda}_{\phi}$ to denote the same submanifold as $L^{\lambda}_{\phi}$ but with reversed orientation.

As $\phi_{m}\in (0, \pi)$, there is a unique intersection point $\{p\} = L_{0}\cap L^{\lambda}_{\phi}$, where $p\in\mathbb{C}^{m}$ is of the form $(0, \cdots, 0, q)$, $q\in\mathbb{C}$. Let $s_{0}, s_{1}\in (-\pi/2, \pi/2)$ be the points such that 
\begin{align*}
q = -\log\cos(s_{0}) - i(s_{0} - \pi/2) = \lambda-\log\cos(s_{1}) - i(s_{1}-\pi/2) + i\phi_{m}.
\end{align*}
Then $s_{1} - s_{0} = \phi_{m}$. We compute the characterizing angles of the tangent cone of $L_{0}\cup L^{\lambda}_{\phi}$ at $p$. Let $p_{0}$, $p^{\lambda}_{\phi}$ be the preimages of $p$ under $F_{0}$ and $F^{\lambda}_{\phi}$, respectively.
\begin{lemma}
For any $\lambda\in\mathbb{R}$, the characterizing angles of $T_{p_{0}}L_{0}\cup T_{p^{\lambda}_{\phi}}L^{\lambda}_{\phi}$ are $(\phi_{1}, \cdots, \phi_{m-1}, \phi_{m})$, which do not depend on $\lambda$. Hence it satisfies the angle condition if and only if $\phi\in\mathcal{A}$.
\end{lemma}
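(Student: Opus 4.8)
The plan is to compute both tangent planes directly from the explicit parametrizations and then produce a single unitary that normalizes the pair into standard form.

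First I would differentiate. Since the preimages have the form $p_0 = (0,\dots,0,s_0)$ and $p^\lambda_\phi = (0,\dots,0,s_1)$, reading off the partial derivatives of $F_0$ at $p_0$ gives $T_{p_0}L_0 = \spn_{\mathbb R}\{e_1,\dots,e_{m-1},(\tan s_0 - i)e_m\}$, where $e_1,\dots,e_m$ is the standard complex basis of $\mathbb C^m$; likewise, differentiating the explicit formula for $F^\lambda_\phi$ at $p^\lambda_\phi$ (the constant terms in the last coordinate drop out upon differentiation) gives $T_{p^\lambda_\phi}L^\lambda_\phi = \spn_{\mathbb R}\{e^{i\phi_1}e_1,\dots,e^{i\phi_{m-1}}e_{m-1},(\tan s_1 - i)e_m\}$. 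Using the elementary identity $\tan s - i = \sec s \cdot e^{i(s-\pi/2)}$, which is valid with $\sec s > 0$ for $s\in(-\pi/2,\pi/2)$, the last spanning vector of each plane is a positive real multiple of $e^{i(s_0-\pi/2)}e_m$ (resp. of $e^{i(s_1-\pi/2)}e_m$).

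Next I would conjugate by the diagonal unitary $U_0 := \operatorname{diag}(1,\dots,1,e^{-i(s_0-\pi/2)})\in U(m)$. It fixes $e_1,\dots,e_{m-1}$ and sends $e^{i(s_0-\pi/2)}e_m$ to $e_m$, so $U_0(T_{p_0}L_0) = \mathbb R^m = \Pi_0$. On the second plane it turns $e^{i(s_1-\pi/2)}e_m$ into $e^{i(s_1-s_0)}e_m$; since equating imaginary parts in the two expressions for the intersection value $q$ forces $s_1 - s_0 = \phi_m$ (as recorded just before the lemma), this is $e^{i\phi_m}e_m$, whence $U_0(T_{p^\lambda_\phi}L^\lambda_\phi) = \spn_{\mathbb R}\{e^{i\phi_1}e_1,\dots,e^{i\phi_m}e_m\} = \Pi_\phi$. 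Thus $U_0$ carries $T_{p_0}L_0\cup T_{p^\lambda_\phi}L^\lambda_\phi$ to $\Pi_0\cup\Pi_\phi$ with characterizing angles $(\phi_1,\dots,\phi_{m-1},\phi_m)$, none of which depends on $\lambda$. As every $\phi_j$ lies in $(0,\pi)$ the two planes intersect only at the origin, so this is a transverse pair and $(\phi_1,\dots,\phi_m)$ is exactly its (well-defined, $U(m)$-invariant) tuple of characterizing angles; the last assertion of the lemma is then immediate from the definition of $\mathcal A$.

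I do not expect a genuine obstacle here, since the content reduces to two explicit differentiations and one trigonometric identity. The only point worth stressing — and the only apparent source of $\lambda$-dependence — is that $s_0$ and $s_1$ individually \emph{do} vary with $\lambda$ (they are pinned down by equating the \emph{real} parts of the two formulas for $q$), whereas their difference is forced to equal $\phi_m$ purely by the \emph{imaginary} parts; this is precisely why the normalizing unitary $U_0$, although it depends on $\lambda$, always produces the same standard pair $\Pi_0\cup\Pi_\phi$. If desirable I would also briefly recall that the characterizing angles of a transverse pair of Lagrangian planes constitute a bona fide $U(m)$-invariant, so that exhibiting one normalizing unitary suffices.
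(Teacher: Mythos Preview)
Your proof is correct and follows essentially the same line as the paper's: the paper simply observes that the first $m-1$ characterizing angles are immediate from the diagonal rotation by $e^{i\phi_j}$, and that the last tangent direction $\tan s_j - i$ has argument $s_j - \pi/2$, so the remaining angle is $(s_1 - \pi/2) - (s_0 - \pi/2) = s_1 - s_0 = \phi_m$. Your version spells out the normalizing unitary and the $U(m)$-invariance explicitly, but the underlying computation is identical.
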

\begin{proof}
It suffices to compute the difference between the arguments of the tangent vectors of $x_{m}\mapsto -\log\cos(x_{m}) - i(x_{m} - \pi/2)$ and $x_{m}\mapsto \lambda-\log\cos(x_{m}) - i(x_{m} - \pi/2) + i\phi_{m}$. It is clear that the tangent vectors at $x_{m} = s_{0}$ and $x_{m} = s_{1}$ are $\tan(s_{0}) - i$ and $\tan(s_{1}) - i$, respectively. Therefore, the difference of the arguments is
\begin{align*}
    \left(s_{1} - \frac{\pi}{2}\right) - \left(s_{0} - \frac{\pi}{2}\right) = s_{1} - s_{0} = \phi_{m}.
\end{align*}
\end{proof}
The resulting submanifold $L_{0}\cup L^{\lambda}_{\phi}$ is a Lagrangian translating soliton with phase $0$ and conical singularity at the intersection point $p\in\mathbb{C}^{m}$ with rate $\mu = 3$ and special Lagrangian cone $T_{p_{0}}L_{0}\cup T_{p^{\lambda}_{\phi}}L^{\lambda}_{\phi}$. Note that $L_{0}\cup L^{\lambda}_{\phi}$ is $T$-finite in the sense of Definition $\ref{Tfinite}$, with $T = -e_{m}$. One can construct more complicated $T$-finite examples of Lagrangian translating solitons with phase $0$ and isolated conical singularities so that each tangent cone satisfies the angle condition by intersecting more $L^{\lambda}_{\phi}$ with different choices of $\lambda\in\mathbb{R}$ and $\phi = (\phi_{1}, \cdots, \phi_{m})\in\mathcal{A}$, see Figure 3.
\begin{center}
    \includegraphics[width=0.45\textwidth]{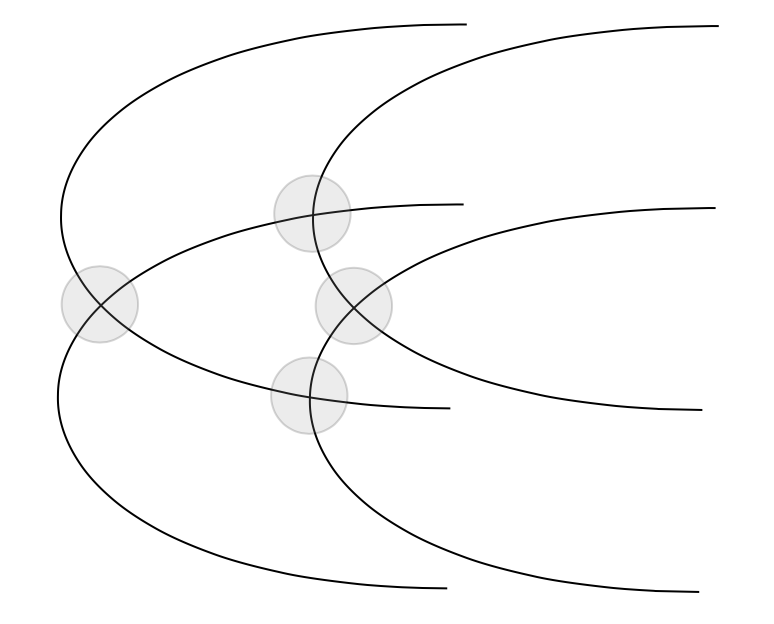}\\
    \small{Figure 3. $T$-finite Lagrangian translating soliton with 4 singularities}
\end{center}

\begin{remark}
One can construct {\it $T$-infinite} examples by intersecting Lagrangian planes and Grim Reaper cylinders. See Figure 4.
\begin{center}
    \includegraphics[width=0.45\textwidth]{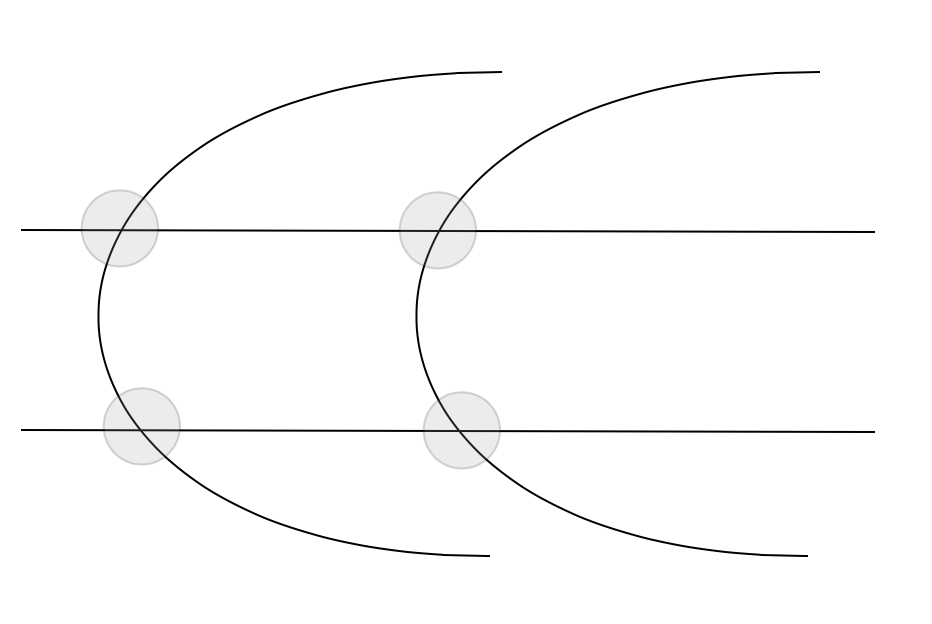}\\
    \small{Figure 4. $T$-infinite Lagrangian translating soliton with 4 singularities}
\end{center}

\end{remark}

\section{Construction of Approximate Solutions}
We will construct approximate solutions to the Lagrangian translating soliton equation $(\ref{translatoreq})$ by resolving the conical singularities of the Lagrangian translating soliton with isolated conical singularities constructed in the last section using special Lagrangian Lawlor necks. The resulting submanifolds are Lagrangian, but not $f$-special Lagrangian, as the Lawlor necks are not $f$-special Lagrangian. 

\subsection{Lagrangian surgery using Lawlor necks}
In this subsection we construct a $1$-parameter family of approximate solutions $\widetilde{F}_{t}:X_{t}\to\mathbb{C}^{m}$ which converges as $t\to 0$ to a Lagrangian translating soliton with isolated singularities $\widetilde{F}$ constructed in \S 3.4. We will only demonstrate the construction for the case where $\widetilde{F}$ is a union of two Grim Reaper cylinders. The constructions for the other cases described in \S3.4 are completely analogous.

Given $\phi = (\phi_{1}, \cdots, \phi_{m})\in\mathcal{A}$ and $\lambda\in\mathbb{R}$. Let $F_{0}$, $F^{\lambda}_{\phi}$ denote the Grim Reaper cylinders as in \S3.4. By parametrizing Grim Reaper curve by the arc-length, we may assume that $F_{0}$ and $F^{\lambda}_{\phi}$  are embeddings of $\mathbb{R}^{m}$ with induced flat metric. To distinguish the domains of $F_{0}$ from that of $F^{\lambda}_{\phi}$, we denote $F_{0}:\mathbb{R}^{m}_{0}\to\mathbb{C}^{m}$ and $F^{\lambda}_{\phi}:\mathbb{R}^{m}_{\phi}\to\mathbb{C}^{m}$, where $\mathbb{R}^{m}_{0}= \mathbb{R}^{m}_{\phi}=\mathbb{R}^{m}$. Define $\widetilde{F}:\mathbb{R}^{m}_{0}\cup\mathbb{R}^{m}_{\phi}\to\mathbb{C}^{m}$ by
\begin{align*}
\widetilde{F}(x) = \left\{\begin{array}{lr}
                  F_{0}(x),&\quad x\in\mathbb{R}^{m}_{0},\\
                  F^{\lambda}_{\phi}(x),&\quad x\in\mathbb{R}^{m}_{\phi}.
                  \end{array}\right.
\end{align*}
Then we have seen from \S3.4 that $\widetilde{F}$ defines a Lagrangian translating soliton with phase $0$, and with an isolated conical singularity at the intersection point $p$ with rate $\mu=3$ and cone $C_{p} = T_{p_{0}}L_{0}\cup -T_{p_{\phi}}L_{\phi}$ satisfying the angle condition.

We now construct a family of embedded Lagrangian submanifolds $\tilde{F}_{t}: X_{t}\to\mathbb{C}^{m}$ for $t\in (0, \delta)$ so that $\tilde{L}_{t} = \tilde{F}_{t}(X_{t})\to \widetilde{L} = \widetilde{F}(\mathbb{R}^{m}_{0}\cup\mathbb{R}^{m}_{\phi})$ as currents as $t\to 0$, by gluing in the special Lagrangian Lawlor necks $N_{t} := tN: S^{m-1}\times\mathbb{R}\to\mathbb{C}^{m}$ asymptotic to $C$, where $X_{t}$ is homeomorphic to the connected sum $\mathbb{R}^{m}_{0}\#\mathbb{R}^{m}_{\phi}\simeq S^{m-1}\times\mathbb{R}$ for each $t\in(0, \delta)$. Let $\Sigma := S^{m-1}\cup S^{m-1}\subset S^{2m-1}$ be the link of $C$. Since $p$ is a conical singularity of $L_{0}\cup L_{\phi}$, there exists $\epsilon>0$ and a diffeomorphism $\check{\phi}: \Sigma\times (0, \epsilon]\to E_{p}:=\overline{B_{1}(p_{0})}\setminus\{p_{0}\}\cup \overline{B_{1}(p_{\phi})}\setminus\{p_{\phi}\}$, where $B_{1}(p_{0})$ and $B_{1}(p_{\phi})$ are the unit balls in $\mathbb{R}^{m}_{0}$, $\mathbb{R}^{m}_{\phi}$, repectively, so that for every $k\in\{0\}\cup\mathbb{N}$,
\begin{align*}
    |\nabla^{k}(\widetilde{F}\circ\check{\phi} - (\iota_{C} + p))|_{g_{C}} = O(r^{2-k})\quad\mbox{as $r\to 0$}.
\end{align*}
Since $\mu = 3>0$, by \cite[p239-240]{PaciniGlue}, there exists $u\in C^{\infty}(\Sigma\times(0, \epsilon])$ with $|\nabla^{k}u|_{g_{C}} = O(r^{3-k})$ for each $k\in\{0\}\cup\mathbb{N}$, so that $\widetilde{F}(E_{p})$ can be written as a graph
\begin{align*}
    \widetilde{F}\circ\check{\phi}(\Sigma\times(0, \epsilon]) = \{\: p + \iota_{C}(\sigma, r) + J(\iota_{C})_{*}(\nabla u)(\sigma, r)\:|\:(\sigma, r)\in \Sigma\times(0, \epsilon]\:\}
\end{align*}
over $C$. Likewise, the Lawlor neck $N: S^{m-1}\times\mathbb{R}\to\mathbb{C}^{m}$ is AC with rate $\lambda = 2-m$ and cone $C$, so there exist a compact subset $K\subset S^{m-1}\times\mathbb{R}$, a constant $\hat{R}>0$ and a diffeomorphism $\hat{\phi}:\Sigma\times [\hat{R}, \infty)\to \hat{E}_{\infty}:=\overline{(S^{m-1}\times\mathbb{R})\setminus K}$ so that 
\begin{align}\label{lawlordiff}
    |\nabla^{k}(N\circ\hat{\phi} - \iota_{C})|_{g_{C}} = O(r^{1-m-k})\quad\mbox{ as $r\to\infty$}.
\end{align}
For each $t\in(0, \delta)$, consider the rescaled Lawlor neck $tN: S^{m-1}\times\mathbb{R}\to\mathbb{C}^{m}$. Define $\hat{\phi}_{t}(\sigma, r):=\hat{\phi}(\sigma, r/t)$, then $\hat{\phi}_{t}: \Sigma\times[t\hat{R}, \infty)\to \hat{E}_{\infty}$ is a diffeomorphism, and
\begin{align*}
    |\nabla^{k}(tN\circ\hat{\phi}_{t} - \iota_{C})|_{g_{C}} = t^{m}O(r^{1-m-k})\quad\mbox{ as $r\to\infty$}.
\end{align*}
Hence $tN$ is again AC with rate $\lambda = 2-m$ and cone $C$. We can find $v_{t}\in C^{\infty}(\Sigma\times[t\hat{R}, \infty))$ so that $tN(\hat{E}_{\infty}) + p$ can be written as a graph
\begin{align*}
    tN(\hat{E}_{\infty}) + p = \{\:p + \iota_{C}( \sigma, r) + J(\iota_{C})_{*}(\nabla v_{t})(\sigma, r)\:|\:(\sigma, r)\in\Sigma\times[t\hat{R}, \infty)  \:\}
\end{align*}
over $C$. 

Choose $\tau\in (0, 1)$. Then for $\delta$ small enough we have $t\hat{R}<t^{\tau}<2t^{\tau}<\epsilon$ for $t\in (0, \delta)$. Fix a smooth monotone increasing function $\eta:(0, \infty)\to [0, 1]$ such that $\eta(r)\equiv 0$ on $(0, 1]$ and $\eta(r)\equiv 1$ on $[2, \infty)$. Define $w_{t}:\Sigma\times[t\hat{R}, \epsilon]\to\mathbb{R}$ by
\begin{align*}
w_{t}(\sigma, r) := \eta(t^{-\tau}r)u(\sigma, r) + (1-\eta(t^{-\tau}r))v_{t}(\sigma, r).
\end{align*}
Then $w_{t}$ is a smooth interpolation between $u$ and $v_{t}$. Define an $m$-manifold $X_{t}$ by
\begin{align*}
X_{t} := [\:(\mathbb{R}^{m}_{0}\cup\mathbb{R}^{m}_{\phi})\setminus E_{p}\:]\cup [\: \sigma\times [t\hat{R}, \epsilon]\:]\cup [\:(S^{m-1}\times\mathbb{R})\setminus \hat{E}_{\infty}\:].
\end{align*}
and an embedding $\tilde{F}_{t}: X_{t}\to\mathbb{C}^{m}$ by
\begin{align*}
\widetilde{F}_{t}(x) = \left\{\begin{array}{ll}
                          \widetilde{F}(x),&\quad x\in (\mathbb{R}^{m}_{0}\cup\mathbb{R}^{m}_{\phi})\setminus E_{p},\\
                          p + \iota_{C}(x) + J(\iota_{C})_{*}(\nabla w_{t})(x),&\quad x\in\Sigma\times[t\hat{R}, \epsilon],\\
                          p+tN(x),&\quad x\in (S^{m-1}\times\mathbb{R})\setminus \hat{E}_{\infty},
                         \end{array}\right.
\end{align*}
Then by construction, $\tilde{F}_{t}:X_{t}\to\mathbb{C}^{m}$ is a smooth, embedded, Lagrangian submanifold in $\mathbb{C}^{m}$. We will call $\widetilde{F}_{t}$ an {\it approximate solution}. The induced metric $g_{t}:= \widetilde{F}_{t}^{*}g_{0}$ on $X_{t}$ satisfies the following property.
\begin{lemma}[\cite{PaciniGlue}, Lemma 4.3]\label{metric}
Let $g$ be the Euclidean metric on $\mathbb{R}^{m}$, and let $g^{N}_{t}$ be the induced AC metric of $tN: S^{m-1}\times\mathbb{R}\to\mathbb{C}^{m}$.  Assume that $\tau\in (\frac{m}{m+1}, 1)$. Then $g_{t}$ satisfies
\begin{align*}
    g_{t} = \left\{\begin{array}{ll}
                          g,&\quad x\in (\mathbb{R}^{m}_{0}\cup\mathbb{R}^{m}_{\phi})\setminus E_{p},\\ \phi^{*}g,&\quad x\in\Sigma\times[2t^{\tau}, \epsilon],\\ \hat{\phi}_{t}^{*}g^{N}_{t},&\quad x\in\Sigma\times[t\hat{R}, t^{\tau}],\\
                          g^{N}_{t},&\quad x\in (S^{m-1}\times\mathbb{R})\setminus \hat{E}_{\infty},
                         \end{array}\right.
\end{align*}
and for all $k\geq 0$,
\begin{align*}
    \sup_{\Sigma\times[t^{\tau}, t^{2\tau}]}|\nabla^{k}(g_{t} - g_{C})|_{r^{-2}g_{C}\otimes g_{C}}\to 0\quad\mbox{ as $t\to 0$}.
\end{align*}
\end{lemma}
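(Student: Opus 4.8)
The plan is to treat separately the four ``exact'' identities in the piecewise formula for $g_t$ and the single scale-invariant convergence statement on the transition annulus; the former are immediate from the construction, the latter is the only place where estimates are needed.

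For the exact identities I would simply unwind the definitions of $\widetilde{F}_t$ and of the interpolating potential $w_t=\eta(t^{-\tau}r)u+(1-\eta(t^{-\tau}r))v_t$. Off the gluing cylinder $\widetilde{F}_t$ coincides with $\widetilde{F}$ on $(\mathbb{R}^{m}_{0}\cup\mathbb{R}^{m}_{\phi})\setminus E_p$, and since the Grim Reaper cylinders were parametrised by arc length they are isometric immersions of flat $\mathbb{R}^m$, so $g_t=\widetilde{F}^{*}g_0=g$ there; likewise $\widetilde{F}_t=p+tN$ on $(S^{m-1}\times\mathbb{R})\setminus\hat{E}_\infty$, whence $g_t=(tN)^{*}g_0=g^{N}_{t}$. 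On the overlap cylinder $\Sigma\times[t\hat{R},\epsilon]$ one uses $t\hat{R}<t^{\tau}<2t^{\tau}<\epsilon$: on $[2t^{\tau},\epsilon]$ the cut-off is identically $1$, so $w_t=u$ and hence $\widetilde{F}_t=\widetilde{F}\circ\check{\phi}$ by the defining property of $u$, giving $g_t=\check{\phi}^{*}g$; on $[t\hat{R},t^{\tau}]$ the cut-off is identically $0$, so $w_t=v_t$ and hence $\widetilde{F}_t=p+tN\circ\hat{\phi}_t$ by the defining property of $v_t$, giving $g_t=\hat{\phi}_t^{*}g^{N}_{t}$. None of this uses anything beyond the construction of \S4.1.

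The substantive part is the transition annulus, where $\eta(t^{-\tau}r)$ is neither $0$ nor $1$ and $\widetilde{F}_t$ is the normal graph $(\sigma,r)\mapsto p+\iota_{C}(\sigma,r)+J(\iota_{C})_{*}(\nabla w_t)(\sigma,r)$ over the (flat) special Lagrangian cone $C$. I would write the induced metric of such a graph in the standard form
\[
g_t=g_{C}+L\!\left(\nabla^{2}w_t,\,r^{-1}\nabla w_t\right)+Q\!\left(\nabla^{2}w_t,\,r^{-1}\nabla w_t\right),
\]
with $L$ linear and $Q$ quadratic in their arguments, the arguments measured in the cone metric, so that it suffices to estimate $\nabla^{j}w_t$ in $g_C$ and to check that, after the scale-invariant rescaling built into the $r^{-2}g_{C}\otimes g_{C}$ norm, every term tends to $0$ as $t\to0$. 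The three inputs are: the CS rate $\mu=3$, giving $|\nabla^{j}u|_{g_{C}}=O(r^{3-j})$; the AC rate $\lambda=2-m$ of $N$ together with the scaling of $tN\circ\hat{\phi}_t$ recorded in \S4.1, giving $|\nabla^{j}v_t|_{g_{C}}=O(t^{m}r^{2-m-j})$; and $|\nabla^{j}\eta(t^{-\tau}\cdot)|_{g_{C}}=O(r^{-j})=O(t^{-\tau j})$ on $r\sim t^{\tau}$. Writing $w_t=v_t+\eta(u-v_t)$ and applying Leibniz, on $r\sim t^{\tau}$ the cross-terms produced by differentiating the cut-off turn out to be of the same order as the $\nabla^{j}u$ and $\nabla^{j}v_t$ terms themselves: the hypothesis $\tau\in(\tfrac{m}{m+1},1)$ enters precisely in deciding which of the two rates dominates the difference $u-v_t$, and after rescaling the whole expansion is $O(t^{\tau})+O(t^{m(1-\tau)})$, hence tends to $0$ since $\tau\in(0,1)$.

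The only real work, and the ``hard part'' such as it is, is this exponent bookkeeping in the transition annulus: one must carry the three decay scales $r^{3-j}$, $t^{m}r^{2-m-j}$, $t^{-\tau j}$ simultaneously across $r\sim t^{\tau}$ and confirm that each term of the metric expansion, once weighted by the appropriate power of $r$, is a strictly positive power of $t$. Conceptually there is nothing new: the whole lemma is the adaptation of \cite[Lemma 4.3]{PaciniGlue} to the present CS rate $\mu=3$ and AC rate $\lambda=2-m$, and I would present it as such.
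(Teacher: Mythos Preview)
The paper does not prove this lemma; it is quoted verbatim from \cite[Lemma~4.3]{PaciniGlue} and simply cited. Your sketch is correct and is exactly the argument Pacini gives: the four exact identities are immediate from the definition of $w_t$ and the cut-off $\eta$, and on the transition annulus one expands the induced metric of the normal graph over the cone and bounds the linear and quadratic pieces using the CS decay $|\nabla^{j}u|=O(r^{3-j})$, the rescaled AC decay $|\nabla^{j}v_t|=O(t^{m}r^{2-m-j})$, and the scale-invariant bounds on $\eta(t^{-\tau}\cdot)$, obtaining $O(t^{\tau})+O(t^{m(1-\tau)})$ after weighting.

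One small clarification: your remark that the hypothesis $\tau>\tfrac{m}{m+1}$ ``enters precisely in deciding which of the two rates dominates'' is literally true but slightly misleading as to its necessity. Both $t^{\tau}$ and $t^{m(1-\tau)}$ tend to $0$ for any $\tau\in(0,1)$, so the convergence statement in the lemma holds without the lower bound on $\tau$. The restriction $\tau>\tfrac{m}{m+1}$ is really imposed so that the slower rate $t^{m(1-\tau)}$ beats $t^{\tau}$, which matters later (e.g.\ in the error estimate of Proposition~\ref{initial_estimate} and in choosing $\alpha$ in \S6.4), not for this lemma per se. You might phrase it as: the condition identifies $t^{m(1-\tau)}$ as the dominant scale, which is what the subsequent analysis needs.
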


The components $(\mathbb{R}^{m}_{0}\cup\mathbb{R}^{m}_{\phi})\setminus E_{p} = [\:\mathbb{R}^{m}_{0}\setminus B_{1}(p_{0})\:]\cup[\:\mathbb{R}^{m}_{\phi}\setminus B_{1}(p_{\phi})\:]$ are noncompact and equipped with Euclidean metric $g$. Following \cite{Trident}, we will call them the {\it wings}. The component $(S^{m-1}\times\mathbb{R})\setminus \hat{E}_{\infty}$ is the truncated cylinder with an induced AC metric $g^{N}_{t}$ and will be called the {\it neck region}. The {\it transition region} $\Sigma\times[t\hat{R}, \epsilon]$ is a smooth interpolation between the wings and the neck. We remark that if one construct approximate solution of more than one isolated conical singularities, than the wings consist of more components of $\mathbb{R}^{m}$ punctured by more balls. For instance, the wings of the approximate solution constructed from Figure 3 or Figure 4 would consist of four copies of $\mathbb{R}^{m}$ punctured by two balls.

\subsection{Error estimate on the approximate solutions}
The Lagrangian submanifold $\tilde{F}_{t}$ is a translating soliton of phase $0$ when restricting to the wings $(\mathbb{R}^{m}_{0}\cup\mathbb{R}^{m}_{\phi})\setminus E_{p}$. However, since the Lawlor neck is not $f$-special Lagrangian, there are some {\it error terms} on the neck and transition region. The error can be described as follows. Recall that $(\mathbb{C}^{m}, J, \omega_{0}, \Omega_{f})$ is an almost Calabi--Yau $m$-fold, where $\Omega_{f}$ is a holomorphic $(m, 0)$-form given by $\Omega_{f} = e^{-\frac{f}{2}- i\langle z, JT\rangle}dz_{1}\wedge\cdots\wedge dz_{m}$, and Lagrangian translating solitons are $f$-special Lagrangians in $(\mathbb{C}^{m}, J, \omega_{0}, \Omega_{f})$. For any Lagrangian immersion $F: X\to\mathbb{C}^{m}$, consider the function defined by
\begin{align*}
\Theta(F) := *F^{*}\im\Omega_{f}: X\to\mathbb{R},
\end{align*}
where $*$ is the Hodge star with respect to the induced metric $g=F^{*}g_{0}$.
Then $\Theta(F) = 0$ if and only if $F:X\to\mathbb{C}^{m}$ is an $f$-special Lagrangian of phase $0$. Hence, what we want is to estimate $\Theta(\widetilde{F}_{t})$ on the neck and transition region, where $\widetilde{F}_{t}:X_{t}\to\mathbb{C}^{m}$ is the approximate solution constructed in \S 4.1. First, we employ the $C^{1}$-estimate of $\Theta(\widetilde{F}_{t})$ proved by Joyce \cite{Joyce3} adapted to our situation (In our case, we have a conical singularity with rate $\mu=3$ and an AC end with rate $\lambda = 2-m$).
\begin{proposition}[\cite{Joyce3}, Proposition 6.4]\label{initial_estimate}
There exists $\delta>0$ and $C>0$ such that for all $t\in (0, \delta)$ and $(\sigma, r)\in\Sigma\times(t\hat{R}, \epsilon)$,
\begin{align*}
|\Theta(\widetilde{F}_{t})(\sigma, r)| &= \left\{\begin{array}{ll}
                          Cr,&\quad r\in(t\hat{R}, t^{\tau}],\\
                          Ct^{\tau} + Ct^{(1-\tau)m},&\quad r\in(t^{\tau}, 2t^{\tau}),\\
                          0,&\quad r\in[2t^{\tau}, \epsilon),
                          \end{array}\right.\\
|d\Theta(\widetilde{F}_{t})(\sigma, r)| &= \left\{\begin{array}{ll}
                          C,&\quad r\in(t\hat{R}, t^{\tau}],\\
                          C + Ct^{(1-\tau)m-\tau},&\quad r\in(t^{\tau}, 2t^{\tau}),\\
                          0,&\quad r\in[2t^{\tau}, \epsilon),
                          \end{array}\right.\\
    \mbox{and} \quad|\Theta(\widetilde{F}_{t})|\leq Ct,&\quad |d\Theta(\widetilde{F}_{t})|\leq C\quad\mbox{on}\quad (S^{m-1}\times\mathbb{R})\setminus \hat{E}_{\infty}.
\end{align*}
Here, the norm $|\cdot|$ is computed using the induced metrics on the corresponding components.
\end{proposition}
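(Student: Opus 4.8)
The plan is to follow Joyce's region-by-region argument for the gluing of special Lagrangians in almost Calabi--Yau manifolds (\cite{Joyce3}, Proposition 6.4), with two cosmetic differences in our setting: the Lawlor necks are exactly special Lagrangian for the Calabi--Yau form $\Omega=dz_{1}\wedge\cdots\wedge dz_{m}$ rather than for $\Omega_{f}$, and the building blocks have a CS end of rate $\mu=3$ and an AC end of rate $\lambda=2-m$. I split $X_{t}$ into: (a) the wings together with the outer part $\Sigma\times[2t^{\tau},\epsilon)$ of the transition region; (b) the neck $(S^{m-1}\times\mathbb{R})\setminus\hat{E}_{\infty}$ together with the inner part $\Sigma\times(t\hat{R},t^{\tau}]$ of the transition region; (c) the overlap annulus $\Sigma\times(t^{\tau},2t^{\tau})$, where $\eta(t^{-\tau}r)$ genuinely interpolates.

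On (a), $\widetilde{F}_{t}$ coincides with the $f$-special Lagrangian $\widetilde{F}$ of phase $0$ (on the wings by construction, and on $\Sigma\times[2t^{\tau},\epsilon)$ because $\eta(t^{-\tau}r)\equiv1$ there, so $w_{t}=u$); since $\Theta$ vanishes identically on such a submanifold, $\Theta(\widetilde{F}_{t})=0$ and $d\Theta(\widetilde{F}_{t})=0$ there, which is the bottom line of each bracket. On (b), $\widetilde{F}_{t}=p+tN$ (on the neck by construction, and on $\Sigma\times(t\hat{R},t^{\tau}]$ because $\eta(t^{-\tau}r)\equiv0$ there, so $w_{t}=v_{t}$), and the only source of error is the discrepancy between $\Omega$ and $\Omega_{f}$. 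The Lawlor neck $N$, hence $tN$, is exactly special Lagrangian for $\Omega$, and its constant Lagrangian angle equals that of its asymptotic cone $C$; since $C$ is the tangent cone at $p$ of the $f$-special Lagrangian $\widetilde{F}$ of phase $0$, this angle is $\theta_{p}:=\langle p,JT\rangle$, so $(tN)^{*}\Omega=e^{i\theta_{p}}\,dV_{g_{t}}$. Writing $\Omega_{f}|_{p+z}=e^{-\langle p,T\rangle}e^{-i\theta_{p}}\,e^{-\langle z,T\rangle-i\langle z,JT\rangle}\,\Omega|_{p+z}$ (recall $f(\cdot)=2\langle\cdot,T\rangle$) and pulling back by $p+tN$ gives the explicit identity
\[
\Theta(\widetilde{F}_{t}) \;=\; -\,e^{-\langle p,T\rangle}\,e^{-\langle z,T\rangle}\,\sin\langle z,JT\rangle,\qquad z=\widetilde{F}_{t}-p .
\]
On the neck, $|z|=O(t)$ because the compact core of $S^{m-1}\times\mathbb{R}$ is scaled by $t$; on $\Sigma\times(t\hat{R},t^{\tau}]$ one has $z=\iota_{C}(\sigma,r)+J(\iota_{C})_{*}(\nabla v_{t})$, hence $|z|\le r+t^{m}O(r^{1-m})=r\big(1+O((t/r)^{m})\big)=O(r)$ by $(\ref{lawlordiff})$. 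Since $\widetilde{F}_{t}$ is an isometric immersion, $|d\langle z,T\rangle|_{g_{t}}$ and $|d\langle z,JT\rangle|_{g_{t}}$ are bounded by $|T|$, so Taylor-expanding $\sin$ and $e^{-\langle z,T\rangle}$ about $z=0$ in the identity above gives $|\Theta(\widetilde{F}_{t})|\le C|z|$ and $|d\Theta(\widetilde{F}_{t})|\le C$: the $Ct$ (resp.\ $Cr$) bound for $|\Theta|$ and the $C$ bound for $|d\Theta|$ on these sets.

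Region (c) is the substantive step. Here $\widetilde{F}_{t}$ is the graph over the special Lagrangian cone $C$ with potential $w_{t}=\eta u+(1-\eta)v_{t}$, while $\widetilde{F}$ restricted to the same annulus is the graph with potential $u$, and $\Theta(\widetilde{F})=0$. The assignment $w\mapsto\Theta(\text{graph of }w\text{ over }C)$ is a smooth second-order operator whose coefficients are smooth functions of position in $\mathbb{C}^{m}$, hence uniformly bounded with their derivatives on this region, which has diameter $O(t^{\tau})$ near $p$; Taylor-expanding at $w=u$, where the operator vanishes, and using Lemma $\ref{metric}$ to compute all norms with the cone metric $g_{C}$, I get $|\Theta(\widetilde{F}_{t})|\le C\big(|\nabla^{\le2}(w_{t}-u)|_{g_{C}}+|\nabla^{\le2}(w_{t}-u)|_{g_{C}}^{2}\big)$ and $|d\Theta(\widetilde{F}_{t})|\le C\big(|\nabla^{\le3}(w_{t}-u)|_{g_{C}}+|\nabla^{\le3}(w_{t}-u)|_{g_{C}}^{2}\big)$. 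Now $w_{t}-u=(1-\eta(t^{-\tau}r))(v_{t}-u)$, and on $r\sim t^{\tau}$ one has $|\nabla^{j}u|_{g_{C}}=O(r^{3-j})=O(t^{\tau(3-j)})$ from $\mu=3$, $|\nabla^{j}v_{t}|_{g_{C}}=t^{m}O(r^{2-m-j})=O(t^{(1-\tau)m+(2-j)\tau})$ from $(\ref{lawlordiff})$ and the rescaling, and $|\nabla^{j}\eta(t^{-\tau}r)|_{g_{C}}=O(t^{-\tau j})$. Combining by the Leibniz rule gives $|\nabla^{2}(w_{t}-u)|_{g_{C}}=O(t^{\tau}+t^{(1-\tau)m})$ and $|\nabla^{3}(w_{t}-u)|_{g_{C}}=O(1+t^{(1-\tau)m-\tau})$, the quadratic remainders being of strictly higher order because $(1-\tau)m>0$; substituting gives exactly $|\Theta(\widetilde{F}_{t})|\le Ct^{\tau}+Ct^{(1-\tau)m}$ and $|d\Theta(\widetilde{F}_{t})|\le C+Ct^{(1-\tau)m-\tau}$ on $\Sigma\times(t^{\tau},2t^{\tau})$.

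The main obstacle is region (c): one must pin down which terms of the nonlinear operator and of the $\Omega_{f}$-correction actually contribute, and check that the interior estimates behind the Taylor expansion hold with constants uniform in $t$ -- this is where Lemma $\ref{metric}$ and the hypothesis $\tau\in(\tfrac{m}{m+1},1)$ enter, ensuring $g_{t}\to g_{C}$ on the annulus with uniform control. Note that the exponents $\tau$ and $(1-\tau)m$ controlling $\Theta$ are positive, so $\Theta(\widetilde{F}_{t})\to0$ uniformly, whereas $t^{(1-\tau)m-\tau}$ may blow up as $t\to0$; this is harmless, since the overlap annulus is tiny and the weighted norms introduced later absorb it. Regions (a) and (b) are then a direct computation once the explicit formula for $\Theta$ on $p+tN$ is in hand.
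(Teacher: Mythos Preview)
Your argument is correct and follows precisely the region-by-region strategy of Joyce \cite[Proposition~6.4]{Joyce3}; the paper itself gives no independent proof of this proposition but simply invokes Joyce's result, noting only that here $\mu=3$ and $\lambda=2-m$. Your explicit identity $\Theta(\widetilde{F}_{t})=-e^{-\langle p,T\rangle}e^{-\langle z,T\rangle}\sin\langle z,JT\rangle$ on region~(b) is a nice simplification specific to the translating-soliton setting (where $\Omega_{f}$ differs from $\Omega$ by an explicit exponential factor), making that part more transparent than in the general almost Calabi--Yau case.
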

For our purpose, we need to estimate $\Theta(\widetilde{F}_{t})$ in {\it weighted Sobolev spaces}, which we shall now define.

Fix a diffeomorphism $\hat{\phi}: \Sigma\times[\hat{R}, \infty)\to \hat{E}_{\infty}$ for the Lawlor neck $N: S^{m-1}\times\mathbb{R}\to\mathbb{C}^{m}$ as in $(\ref{lawlordiff})$. Let $\hat{r}: S^{m-1}\times\mathbb{R}\to\mathbb{R}_{+}$ to be a smooth function such that $\hat{r}\big|_{\hat{E}_{\infty}}(\hat{\phi}(\sigma, r)) = r$. That is, $\hat{r}$ coincides with the coordinate function $r$ on the AC end $\hat{E}_{\infty}$, under the identification of $\hat{\phi}$. Then for $t\in(0, \delta)$, let  $\rho_{t}: X_{t}\to\mathbb{R}_{+}$ be a smooth function such that
\begin{align*}
\rho_{t}(x) = \left\{\begin{array}{ll}
                          1,&\quad x\in (\mathbb{R}^{m}_{0}\cup\mathbb{R}^{m}_{\phi})\setminus E_{p},\\
                          r, &\quad x = (\sigma, r)\in\Sigma\times[t\hat{R}, \epsilon],\\
                          t\hat{r}(x),&\quad x\in (S^{m-1}\times\mathbb{R})\setminus \hat{E}_{\infty}.
                          \end{array}\right.
\end{align*}
Denote $f_{t}:=2\langle \widetilde{F}_{t}, T\rangle$. For $k\geq 0$, $p>1$, $\beta, \gamma\in\mathbb{R}$, define $W^{k, p}_{\beta, \gamma, t}(X_{t})$ to be the Banach space completion of $C^{\infty}_{c}(X_{t})$ with respect to the norm
\begin{align*}
\|u\|_{W^{k, p}_{\beta, \gamma, t}} := \left(\sum_{j=0}^{k}\int_{X_{t}}|e^{\frac{\beta f_{t}}{2}}\rho^{-\gamma+j}_{t}\nabla^{j}u|^{p}\:\rho^{-m}_{t}\:dV_{g_{t}}\right)^{\frac{1}{p}},
\end{align*}
where $|\cdot|$, $\nabla$ are computed using the induced metric $g_{t}$ on $X_{t}$. Notice that in the neck region the weighted norm $\|\cdot\|_{W^{k, p}_{\beta, \gamma, t}}$ is equivalent to the one used in Lockhart--McOwen \cite{LM}. Also note that $W^{k, p}_{\beta, \gamma, t}$ is a special case of weighted Sobolev spaces defined in \cite[p16]{PaciniUnifEst} with weight $w := e^{\frac{\beta f_{t}}{2}}\rho_{t}^{-\gamma}$.

\begin{proposition}\label{InitialEst}
For any $\gamma<0$, $\beta\in\mathbb{R}$, $\tau\in(\frac{m}{m+1}, 1)$, there exists $C>0$ such that
\begin{align*}
\|\Theta(\widetilde{F}_{t})\|_{W^{1, p}_{\beta+1, \gamma-2, t}}\leq C\:t^{\tau(2-\gamma) + (1-\tau)m}.
\end{align*}
\end{proposition}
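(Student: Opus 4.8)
The strategy is to integrate the pointwise bounds from Proposition~\ref{initial_estimate} against the weight defining $W^{1,p}_{\beta+1,\gamma-2,t}$, treating the neck region, the transition region, and the wings separately. On the wings $\Theta(\widetilde F_t)\equiv 0$, so there is nothing to estimate there. On the remaining two regions $f_t = 2\langle\widetilde F_t, T\rangle$ is bounded (the neck $p + tN$ lies in a fixed bounded set as $t\to 0$, and the transition region is a small piece near $p$), so the weight $e^{(\beta+1)f_t/2}$ is bounded above and below by constants independent of $t$; hence it may be dropped and the whole computation reduces to an unweighted-in-$f$, weighted-in-$\rho_t$ estimate. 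The norm then becomes, schematically,
\begin{align*}
\|\Theta(\widetilde F_t)\|_{W^{1,p}_{\beta+1,\gamma-2,t}}^p \sim \int \Big(|\rho_t^{-\gamma+2}\,\Theta(\widetilde F_t)|^p + |\rho_t^{-\gamma+3}\,\nabla\Theta(\widetilde F_t)|^p\Big)\rho_t^{-m}\,dV_{g_t}.
\end{align*}

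\textbf{Key steps.} First I would handle the part of the transition region $\Sigma\times(t\hat R, t^\tau]$, where $\rho_t = r$, $|\Theta| \le Cr$ and $|d\Theta|\le C$ by Proposition~\ref{initial_estimate}. Using $g_t = \hat\phi_t^* g^N_t$ there, with $dV_{g_t}\sim r^{m-1}\,dr\,dV_\Sigma$ (the cone volume element governs the relevant range once $r$ exceeds the neck scale, cf. Lemma~\ref{metric}), both the zeroth and first order contributions give integrands comparable to $r^{(-\gamma+3)p}\,r^{-m}\,r^{m-1} = r^{(-\gamma+3)p-1}$, whose integral over $(t\hat R, t^\tau]$ is $\sim t^{\tau((-\gamma+3)p)}$ provided $-\gamma+3>0$ (which holds since $\gamma<0$); this is the dominant term and yields the exponent $\tau(3-\gamma)$ after taking the $p$-th root — wait, one must be careful: the claimed exponent is $\tau(2-\gamma)+(1-\tau)m$, so I should track the power of $t$ more carefully, noting that on the neck portion (and the transition region near $r\sim t\hat R$) the extra factor of $t^m$ in the AC decay of $tN$ (equation before Lemma~\ref{metric}) contributes. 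Concretely, on the genuine neck region $(S^{m-1}\times\mathbb R)\setminus\hat E_\infty$ we have $|\Theta|\le Ct$, $|d\Theta|\le C$, $\rho_t = t\hat r$, $g_t = g^N_t$; rescaling by $t$ so that $g^N_t = t^2 g^N_1$ and $\hat r$-variable runs over a fixed compact set, the integral over the neck becomes a fixed constant times $t^{\#}$ where the power of $t$ collects from the weight $\rho_t^{-\gamma+j} = t^{-\gamma+j}\hat r^{-\gamma+j}$, the volume form $dV_{g_t} = t^m\,dV_{g^N_1}$, the weight $\rho_t^{-m} = t^{-m}\hat r^{-m}$, and the size of $\Theta$ ($\sim t$) resp. $d\Theta$ ($\sim 1$ but measured in $g_t$, so $|d\Theta|_{g_t} = t^{-1}|d\Theta|_{g^N_1}$); balancing these gives $t$ to a power that, together with the transition-region contribution, I expect to combine to exactly $\tau(2-\gamma)+(1-\tau)m$. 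Finally I would check the small overlap region $\Sigma\times(t^\tau, 2t^\tau)$, where $|\Theta|\le C(t^\tau + t^{(1-\tau)m})$ and $|d\Theta|\le C(1 + t^{(1-\tau)m-\tau})$; here $\rho_t\sim t^\tau$ and the volume of the slab is $\sim t^\tau\cdot(t^\tau)^{m-1} = t^{\tau m}$, and one verifies this contributes a term of order at least $t^{\tau(2-\gamma)+(1-\tau)m}$, i.e. it is not worse than the claimed bound.

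\textbf{Main obstacle.} The delicate point is bookkeeping the exponent of $t$ so that the three regions all produce a power of $t$ that is $\ge \tau(2-\gamma)+(1-\tau)m$, with at least one region saturating it; in particular one must use the hypothesis $\gamma<0$ (to guarantee convergence of the $r$-integrals near $r=t\hat R$ and to ensure the transition-region exponent beats the neck exponent appropriately) and the hypothesis $\tau\in(\tfrac{m}{m+1},1)$ (which, as in Lemma~\ref{metric}, is what makes $g_t$ genuinely comparable to the cone metric on the matching region and makes the interpolation error in $w_t = \eta u + (1-\eta)v_t$ negligible — the $t^{(1-\tau)m}$ terms in Proposition~\ref{initial_estimate} come precisely from differentiating the cutoff $\eta(t^{-\tau}r)$ against $v_t$, which decays like $t^m r^{2-m}$). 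Modulo this power-counting, the proof is a routine change of variables; I do not expect any analytic subtlety beyond it. One should also double-check that the constant $C$ can be chosen uniformly in $t\in(0,\delta)$, which follows because all the pointwise estimates invoked are uniform and the change-of-variables Jacobians are exact powers of $t$.
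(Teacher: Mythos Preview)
Your approach is essentially identical to the paper's: split the support of $\Theta(\widetilde F_t)$ into the neck $I_t=(S^{m-1}\times\mathbb R)\setminus\hat E_\infty$, the annulus $II_t=\Sigma\times[t\hat R,t^\tau]$, and the interpolation slab $III_t=\Sigma\times[t^\tau,2t^\tau]$, absorb the uniformly bounded factor $e^{(\beta+1)f_t/2}$, and integrate the pointwise bounds of Proposition~\ref{initial_estimate} against the cone volume. To resolve your mid-proof hesitation about the exponent: the three regions contribute, after taking the $p$-th root, the exponents $3-\gamma$, $\tau(3-\gamma)$, and $\tau(2-\gamma)+(1-\tau)m$ respectively (the last arising from the $t^{(1-\tau)m}$ term in $III_t$), and the hypothesis $\tau>\tfrac{m}{m+1}$ is exactly the inequality $\tau(3-\gamma)>\tau(2-\gamma)+(1-\tau)m$, so it is the interpolation region $III_t$---not the neck---that saturates the claimed bound.
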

\begin{proof}
For convenience, we will denote the constants that appear in the estimates which are not dependent on $t$ by $C$. Hence, the values of $C$ in different inequalities may be different.

Note that $\Theta(\widetilde{F}_{t})$ is supported on $[(S^{m-1}\times\mathbb{R})\setminus \hat{E}_{\infty}]\cup(\Sigma\times [t\hat{R}, 2t^{\tau}])$. We first estimate $\Theta(\widetilde{F}_{t})$ in the compact region $I_{t}:=(S^{m-1}\times\mathbb{R})\setminus \hat{E}_{\infty}$. In $I_{t}$, we have $\rho_{t} = t\hat{r}$ and $\widetilde{F}_{t} = tN$ with induced metric $g_{t} = t^{2}g^{N}$. Also, $\hat{r}$ and $e^{\frac{f_{t}}{2}}$ are bounded. Thus from Proposition $\ref{initial_estimate}$ we have
\begin{align*}
\int_{I_{t}}|\:e^{\frac{(\beta+1)f_{t}}{2}}\rho^{2-\gamma}_{t}\:\Theta(\widetilde{F}_{t})|^{p}\:\rho^{-m}_{t}\:dV_{g_{t}}\leq C\int_{I_{t}}|(t\hat{r})^{2-\gamma}t|^{p}(t\hat{r})^{-m}t^{m}dV_{g^{N}}\leq C\:t^{(3-\gamma)p}.
\end{align*}
Next we estimate $\Theta(\widetilde{F}_{t})$ in the annulus region $II_{t} := \Sigma\times [t\hat{R}, t^{\tau}]$. In $II_{t}$, $\rho_{t}(\sigma, r) = r\in [t\hat{R}, t^{\tau}]$, $\widetilde{F}_{t} = tN$ with induced metric $g_{t}$ equivalent to the cone metric $g_{C}$, and the function $e^{\frac{f_{t}}{2}}$ is bounded.
Thus from Proposition $\ref{initial_estimate}$ we have
\begin{align*}
\int_{II_{t}}|\:e^{\frac{(\beta+1)f}{2}}\rho^{2-\gamma}_{t}\:\Theta(\widetilde{F}_{t})|^{p}\:\rho^{-m}_{t}\:dV_{g_{t}}\leq C\int_{II_{t}}|r^{2-\gamma}r|^{p}r^{-m}dV_{g_{C}}\leq C\int_{t\hat{R}}^{t^{\tau}}r^{(3-\gamma)p-1}dr\leq C\:t^{(3-\gamma)p\tau}.
\end{align*}
In the remaining region $III_{t} := \Sigma\times [t^{\tau}, 2t^{\tau}]$, $\rho_{t} = r\in [t^{\tau}, 2t^{\tau}]$, the induced metric $g_{t}$ is equivalent to the cone metric $g_{C}$ by Lemma $\ref{metric}$, and the function $e^{\frac{f_{t}}{2}}$ is bounded. Thus from Proposition $\ref{initial_estimate}$ we have
\begin{align*}
\int_{III_{t}}|\:e^{\frac{(\beta+1)f_{t}}{2}}\rho^{2-\gamma}_{t}\:\Theta(\widetilde{F}_{t})|^{p}\:\rho^{-m}_{t}\:dV_{g_{t}}&\leq  C\:t^{(2-\gamma)\tau p}(t^{\tau} + t^{(1-\tau)m})^{p}\int_{III_{t}}r^{-m}dV_{g_{C}}\\
&\leq C\:t^{(2-\gamma)\tau p}(t^{\tau } + t^{(1-\tau)m})^{p}.
\end{align*}
Combining the estimates together, we get
\begin{align*}
\|\Theta(\widetilde{F}_{t})\|_{W^{0, p}_{\beta+1, \gamma, t}}\leq C\:[t^{3-\gamma} + t^{(3-\gamma)\tau} + t^{(2-\gamma)\tau + (1-\tau)m}]
\end{align*}

By similar computation,
\begin{align*}
&\int_{I_{t}}|\:e^{\frac{(\beta+1)f}{2}}\rho^{3-\gamma}_{t}\:d\Theta(\tilde{F}_{t})|^{p}\:\rho^{-m}_{t}\:dV_{g_{t}}\leq C\:t^{(3-\gamma)p},\\
&\int_{II_{t}}|\:e^{\frac{(\beta+1)f}{2}}\rho^{3-\gamma}_{t}\:d\Theta(\tilde{F}_{t})|^{p}\:\rho^{-m}_{t}\:dV_{g_{t}}\leq C\:t^{(3-\gamma)\tau p}.
\end{align*}
and
\begin{align*}
\int_{III_{t}}|\:e^{\frac{(\beta+1)f}{2}}\rho^{3-\gamma}_{t}\:d\Theta(\tilde{F}_{t})|^{p}\:\rho^{-m}_{t}\:dV_{g_{t}}\leq C\:t^{(3-\gamma)\tau p + ((1-\tau)m - \tau)p}.
\end{align*}
Combining the estimates together, we find
\begin{align*}
\|d\Theta(\tilde{F}_{t})\|_{W^{0, p}_{\beta+1, \gamma-3, t}}\leq C\: [\:t^{3-\gamma} + t^{(3-\gamma)\tau} + t^{(2-\gamma)\tau + (1-\tau)m}].
\end{align*}
Therefore,
\begin{align*}
    \|\Theta(\widetilde{F}_{t})\|_{W^{1, p}_{\beta+1, \gamma -2, t}} \leq C\:[\:t^{3-\gamma} + t^{(3-\gamma)\tau} + t^{(2-\gamma)\tau + (1-\tau)m}]\leq C\:[\:t^{(3-\gamma)\tau} + t^{(2-\gamma)\tau + (1-\tau)m}].
\end{align*}
For $\tau\in (\frac{m}{1+m}, 1)$, we have $(3-\gamma)\tau> \tau(2-\gamma)+ (1-\tau)m$. Therefore
\begin{align*}
\|\Theta(\widetilde{F}_{t})\|_{W^{1, p}_{\beta+1, \gamma-2, t}}\leq C\:t^{\tau(2-\tau) + (1-\tau)m}
\end{align*}
and the proposition is proved.
\end{proof}

\subsection{Sobolev Embedding Theorem}
In this subsection, we state relevant Sobolev embedding results for our weighted Sobolev spaces on $T$-finite approximate solutions. 
For each $k\geq 0$, $\beta, \gamma\in\mathbb{R}$, we define the weighted $C^{k}$ space to be the space of $C^{k}$ functions on $X_{t}$ such that the $C^{k}_{\beta, \gamma, t}$-norm 
\begin{align*}
\|u\|_{C^{k}_{\beta, \gamma, t}} := \sum_{j=0}^{k}\sup_{X_{t}}|\:e^{\frac{\beta f_{t}}{2}}\rho_{t}^{-\gamma + j}\nabla^{j}u\:|_{g_{t}}
\end{align*}
is finite. These are Banach space of functions on $X_{t}$. Note that our $C^{k}_{\beta, \gamma, t}$ spaces are special cases of weighted $C^{k}$ spaces defined in \cite[p16]{PaciniUnifEst} with weight $w = e^{\frac{\beta f_{t}}{2}}\rho_{t}^{-\gamma}$.

We use \cite[Theorem 5.1]{PaciniUnifEst} to show the following Sobolev embedding theorem.
\begin{theorem}\label{sobolevemb}
Suppose $\widetilde{F}_{t}: X_{t}\to\mathbb{C}^{m}$ is $T$-finite. For $p\geq 1$, $lp>m$, $\beta'\geq\beta$,  $\gamma'\geq \gamma$ and $t\in (0, \delta)$, we have a continuous embedding
\begin{align*}
W^{k+l, p}_{\beta, \gamma, t}(X_{t})\hookrightarrow C^{k}_{\beta', \gamma', t}(X_{t}).
\end{align*}
Moreover, there exists a constant $C>0$ independent of $t$ such that 
\begin{align*}
\|u\|_{C^{k}_{\beta', \gamma', t}}\leq C\cdot t^{\gamma-\gamma'}\|u\|_{W^{k+l, p}_{\beta, \gamma, t}}.
\end{align*}
\end{theorem}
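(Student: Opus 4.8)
The plan is to reduce the weighted Sobolev embedding on $X_t$ to Pacini's abstract embedding theorem \cite[Theorem 5.1]{PaciniUnifEst}, whose hypotheses are stated for a single manifold with a fixed weight function $w$, and then track how the constants depend on $t$. Recall that our norms $\|\cdot\|_{W^{k,p}_{\beta,\gamma,t}}$ and $\|\cdot\|_{C^k_{\beta',\gamma',t}}$ are precisely the weighted norms of \cite[p16]{PaciniUnifEst} with weights $w = e^{\beta f_t/2}\rho_t^{-\gamma}$ and $w' = e^{\beta' f_t/2}\rho_t^{-\gamma'}$ respectively, together with the ``radius function'' $\rho_t$ playing the role of Pacini's injectivity-radius-type scale. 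So the first step is to verify that $(X_t, g_t, \rho_t, w)$ satisfies the geometric hypotheses of Pacini's theorem uniformly in $t$: namely, that $g_t$ has bounded geometry at scale $\rho_t$ (bounds on curvature and its derivatives, lower bound on injectivity radius, all measured in $\rho_t$-rescaled balls), and that $w$ is ``slowly varying'' relative to $\rho_t$, i.e. $|\nabla^j \log w|_{g_t} \le C\rho_t^{-j}$. The bounded-geometry part follows from Lemma \ref{metric}: on the wings $g_t$ is the fixed flat metric, on the neck it is $g^N_t = t^2 g^N$ (which after rescaling by $\rho_t = t\hat r$ has uniformly bounded geometry, since $g^N$ is a fixed AC metric), and on the transition region $g_t$ is $C^\infty$-close to the cone metric $g_C$ at scale $r = \rho_t$. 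The slowly-varying property of $\rho_t^{-\gamma}$ is immediate from the definition of $\rho_t$; the only new ingredient compared to the purely conical setting is the factor $e^{\beta f_t/2}$, and here is where $T$-finiteness enters.

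Second step: control the weight $e^{\beta f_t/2}$. We have $f_t = 2\langle \widetilde F_t, T\rangle$, so $\nabla f_t$ is the tangential projection of the constant vector $2T$, hence $|\nabla f_t|_{g_t} \le 2|T|$ and $|\nabla^j f_t|_{g_t} \le C$ for $j\ge 1$ (the higher derivatives are controlled by the second fundamental form, which is uniformly bounded at scale $\rho_t$ on all three regions by the bounded-geometry analysis above, and on the wings it is a fixed Grim Reaper so $|\nabla^j f_t|$ is literally bounded). On the wings $\rho_t \equiv 1$, so $|\nabla^j \log(e^{\beta f_t/2})| = |\tfrac{\beta}{2}\nabla^j f_t| \le C = C\rho_t^{-j}$, as required. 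On the neck and transition regions $\rho_t \le \epsilon$ is small, so $|\nabla^j(e^{\beta f_t/2})^{\pm 1}| \le C \le C\rho_t^{-j}$ as well — the estimate only gets easier where $\rho_t$ is small. Crucially, because $\widetilde F_t$ is $T$-finite, $f_t$ is \emph{bounded above} on $X_t$, so $e^{\beta f_t/2}$ is bounded above and below by positive constants independent of $t$ on the compact core; on the Grim Reaper wings $f_t = 2\langle \widetilde F_t, -e_m\rangle$ tends to $-\infty$ along the ends, so $e^{\beta f_t/2}$ is still bounded (for $\beta$ of either sign it is monotone and bounded on each wing by a $t$-independent constant — indeed the wings do not depend on $t$ at all). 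This is exactly the point flagged on page 14 and in the remark after Definition \ref{Tfinite}: if $\widetilde F_t$ were $T$-infinite, $f_t$ would be unbounded above and the weight $e^{\beta f_t/2}$ would blow up, breaking the uniform comparison.

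Third step: invoke Pacini's theorem to get, for each fixed $t$, the continuous embedding $W^{k+l,p}_{\beta,\gamma,t} \hookrightarrow C^k_{\beta,\gamma,t}$ (same weights, $\beta'=\beta$, $\gamma'=\gamma$), with an embedding constant that, by the uniformity established in steps one and two, is \emph{independent of $t$}. Pacini's statement with the scale function $\rho_t$ produces a constant depending only on the bounded-geometry constants and the slowly-varying constants of $w$, all of which we have made uniform; this is the $t^0$ case of the claimed bound. To pass from $(\beta,\gamma)$ to general $(\beta',\gamma')$ with $\beta'\ge\beta$, $\gamma'\ge\gamma$, write the pointwise bound $|e^{\beta'f_t/2}\rho_t^{-\gamma'+j}\nabla^j u| = e^{(\beta'-\beta)f_t/2}\rho_t^{\gamma-\gamma'}\cdot|e^{\beta f_t/2}\rho_t^{-\gamma+j}\nabla^j u|$ and estimate the prefactor: since $\beta'-\beta\ge 0$ and $f_t$ is bounded above by a $t$-independent constant (using $T$-finiteness), $e^{(\beta'-\beta)f_t/2}\le C$; and since $\gamma-\gamma'\le 0$ while $\rho_t \le \max(1, t\sup\hat r) \le C$ everywhere, but on the neck $\rho_t$ can be as small as $t\hat R$, we get $\rho_t^{\gamma-\gamma'}\le (t\hat R)^{\gamma-\gamma'} = C\, t^{\gamma-\gamma'}$ — wait, $\gamma - \gamma' \le 0$ so $t^{\gamma-\gamma'}$ is the \emph{large} power; this is precisely the factor $t^{\gamma-\gamma'}$ appearing in the statement. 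Taking sup over $X_t$ and over $0\le j\le k$ and combining with the $(\beta,\gamma)$-embedding from Pacini gives $\|u\|_{C^k_{\beta',\gamma',t}} \le C\, t^{\gamma-\gamma'}\|u\|_{C^k_{\beta,\gamma,t}} \le C\, t^{\gamma-\gamma'}\|u\|_{W^{k+l,p}_{\beta,\gamma,t}}$, which is the desired inequality.

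\textbf{Main obstacle.} The genuinely delicate point — and the reason the theorem is restricted to $T$-finite approximate solutions — is the uniform two-sided control of $e^{\beta f_t/2}$ on the \emph{noncompact} Grim Reaper wings, and more precisely verifying that Pacini's abstract hypotheses on the weight are met uniformly in $t$ there, since the wings are the only place where the weight $w = e^{\beta f_t/2}\rho_t^{-\gamma}$ is neither bounded nor conical. One must check that $e^{\beta f_t/2}$ is a legitimate ``bounded geometry weight'' on the flat wing $\mathbb{R}^{m-1}\times(-\pi/2,\pi/2)$-type end: this amounts to the slowly-varying estimate $|\nabla^j f_t|\le C$ which, on a Grim Reaper, follows from an explicit computation of the induced metric and second fundamental form; since the wings are $t$-independent this is a fixed, one-time estimate, but it is the analytic heart of why the construction works. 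If instead $f_t$ were allowed to grow without bound in the positive direction (the $T$-infinite case), no choice of $\beta$ would give a uniform constant, and the embedding — hence the whole perturbation scheme — would fail; this is the obstruction the author alludes to on page 14.
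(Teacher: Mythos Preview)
Your proposal is correct and follows essentially the same route as the paper. The paper's proof is shorter: it first uses $T$-finiteness (i.e.\ $f_t$ bounded above) and $\beta'\ge\beta$ to get $e^{(\beta'-\beta)f_t/2}\le C$, then writes $e^{\beta'f_t/2}\rho_t^{-\gamma'} \le C\rho_t^{\gamma-\gamma'}\,e^{\beta f_t/2}\rho_t^{-\gamma}$ and bounds $\rho_t^{\gamma-\gamma'}\le Ct^{\gamma-\gamma'}$ on the neck (using the lower bound on $\hat r$), giving $\|u\|_{C^k_{\beta',\gamma',t}}\le Ct^{\gamma-\gamma'}\|u\|_{C^k_{\beta,\gamma,t}}$; finally it composes with Pacini's embedding $W^{k+l,p}_{\beta,\gamma,t}\hookrightarrow C^k_{\beta,\gamma,t}$ \cite[Theorem~5.1]{PaciniUnifEst}. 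This is exactly your Step~3. Your Steps~1--2, verifying Pacini's bounded-geometry and slowly-varying-weight hypotheses uniformly in $t$, are not spelled out in the paper (they are implicitly delegated to the cited reference), so your write-up is in fact more complete on that point; but note that $T$-finiteness is used only in the weight-comparison step $(\beta,\gamma)\to(\beta',\gamma')$, not in checking Pacini's hypotheses for the fixed weight $e^{\beta f_t/2}\rho_t^{-\gamma}$, since the slowly-varying condition is about $\nabla^j f_t$ rather than $f_t$ itself.
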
 
\begin{proof}
Since $\widetilde{F}_{t}: X_{t}\to\mathbb{C}^{m}$ is $T$-finite, we have $e^{\frac{\beta' f}{2}}\leq Ce^{\frac{\beta f}{2}}$ for some $C = C(\beta'-\beta)>0$. Therefore for each $t$ we have a natural continuous embedding
\begin{align}
    C^{k}_{\beta, \gamma, t}(X_{t})\hookrightarrow C^{k}_{\beta', \gamma', t}(X_{t}).
\end{align}
This embedding is not uniform in $t$. In fact, since
\begin{align*}
    e^{\frac{\beta'f_{t}}{2}}\rho_{t}^{-\gamma'} = e^{\frac{(\beta'-\beta)f_{t}}{2}}\rho_{t}^{-\gamma'+\gamma}e^{\frac{\beta f_{t}}{2}}\rho_{t}^{-\gamma}\leq C\rho_{t}^{-\gamma'+\gamma}e^{\frac{\beta f_{t}}{2}}\rho_{t}^{-\gamma},
\end{align*}
in the neck region we have
\begin{align*}
    e^{\frac{\beta'f_{t}}{2}}\rho_{t}^{-\gamma'}\leq C\hat{r}^{\gamma-\gamma'}t^{\gamma-\gamma'}e^{\frac{\beta f_{t}}{2}}\rho_{t}^{-\gamma}\leq Ct^{\gamma-\gamma'}e^{\frac{\beta f_{t}}{2}}\rho_{t}^{-\gamma},
\end{align*}
where $C$ depends on $\beta'-\beta$ and the lower bound of $\hat{r}$ on the Lawlor neck. The above inequality clearly holds in other regions. This yields
\begin{align*}
    \|u\|_{C^{k}_{\beta', \gamma', t}}\leq Ct^{\gamma-\gamma'}\|u\|_{C^{k}_{\beta, \gamma, t}}
\end{align*}
for some $C>0$ independent of $t$. Composing with the Sobolev embedding \cite[Theorem 5.1]{PaciniUnifEst}
\begin{align*}
    W^{k+l, p}_{\beta, \gamma, t}(X_{t})\hookrightarrow C^{k}_{\beta, \gamma, t}(X_{t}),
\end{align*}
the result follows.
\end{proof}

\section{Inverting the Linearized Operator}
Let $\widetilde{F}_{t}: X_{t}\to\mathbb{C}^{m}$ be the approximate solution that we have constructed in the previous section. Let $g_{t}:=\widetilde{F}^{*}_{t}g_{0}$ be the induced metric, $f_{t}:=\widetilde{F}^{*}_{t}f = 2\langle\widetilde{F}_{t}, -e_{m}\rangle$, and $\theta_{f_{t}}$ be the Lagrangian angle with respect to $\Omega_{f_{t}}$, that is, $\theta_{f_{t}}$ satisfies $\widetilde{F}^{*}_{t}\Omega_{f_{t}} = e^{i\theta_{f_{t}}}\:e^{-f_{t}/2}\:dV_{g_{t}}$.

The goal of this section is to show that there exist small $\delta>0$ and some suitable weights $\beta, \gamma\in\mathbb{R}$ such that for all $t\in (0, \delta)$ and $p>1$, the linear operator
\begin{align*}
\mathcal{L}_{g_{t}}\cdot = e^{-\frac{f_{t}}{2}}(\:\cos\theta_{f_{t}}\:\Delta_{f_{t}}\cdot - \sin\theta_{f_{t}}\langle\nabla\theta_{f_{t}}, \nabla\cdot\rangle_{g_{t}})
\end{align*}
is an isomorphism from $W^{3, p}_{\beta, \gamma, t}(X_{t})$ to $W^{1, p}_{\beta+1, \gamma-2, t}(X_{t})$, and the operator  norm of $\mathcal{L}_{g_{t}}^{-1}$ is bounded independent of $t$. In this case, $\mathcal{L}_{g_{t}}$ will be said to have {\it uniform invertibility}. We first study the invertibility of $\mathcal{L}_{g_{t}}$ on each components of $X_{t}$, namely, the wings and the necks. Then the uniform invertibility of $\mathcal{L}_{g_{t}}$ on the whole space $X_{t}$ is obtained by using suitable cut-off functions argument as in Pacini \cite{PaciniUnifEst}.

\subsection{Analysis on the translating solitons with Euclidean metric}
We first define the corresponding weighted Sobolev spaces on translating solitons, or more generally, $f$-special Lagrangian submanfiolds in gradient steady K\"ahler--Ricci solitons. Let $(M, \overline{\omega}, f)$ be a gradient steady K\"ahler--Ricci soliton and $F:L\to M$ be an $f$-special Lagrangian $m$-fold. By \cite[Proposition 4.5]{WBfmin}, $L$ must be {\it noncompact}.

Define the weighted Sobolev space $\widetilde{W}^{k, p}_{\beta}(L)$ to be the completion of the $C^{\infty}(L)$ with respect to the norm
\begin{align*}
\|u\|_{\widetilde{W}^{k, p}_{\beta}} := \left(\sum_{j=0}^{k}\int_{L}|e^{\frac{\beta f}{2}}\nabla^{j}u|^{p}\:dV_{g}\right)^{\frac{1}{p}}.
\end{align*}
Note that when $\beta = 0$, $\widetilde{W}^{k, p}_{\beta}(L)$ reduces to the ordinary Sobolev space $W^{k, p}(L)$.

The $f$-Laplacian $\Delta_{f} = \Delta_{g} - \frac{1}{2}\langle\nabla f, \cdot\rangle$ extends naturally to a bounded operator 
\begin{align*}
    \Delta_{f}: \widetilde{W}^{k, p}_{\beta}(L)\to \widetilde{W}^{k-2, p}_{\beta}(L),
\end{align*}
and the multiplication by $e^{-\frac{\beta f}{2}}$ gives an isomorphism $e^{-\frac{\beta f}{2}} : W^{k, p}(L)\stackrel{\sim}{\longrightarrow} \widetilde{W}^{k, p}_{\beta}(L)$. Hence we may transform $\Delta_{f}: \widetilde{W}^{k, p}_{\beta}(L)\to \widetilde{W}^{k-2, p}_{\beta}(L)$ into an operator
\begin{align*}
\mathcal{L}_{\beta} := e^{\frac{\beta f}{2}}\circ\Delta_{f}\circ e^{-\frac{\beta f}{2}}: W^{k, p}(L)\to W^{k-2, p}(L),
\end{align*}
where by direct computation,
\begin{align*}
\mathcal{L}_{\beta} = \Delta_{g}\cdot - \left(\frac{1}{2} + \beta\right)\langle\nabla f, \nabla\cdot\rangle + \left(-\frac{\beta}{2}\right)\left(\Delta_{g}f - \frac{1}{2}|\nabla f|^{2} - \frac{\beta}{2}|\nabla f|^{2}\right).
\end{align*}
We estimate the zeroth order term of $\mathcal{L}_{\beta}$.
\begin{lemma}\label{negative}
Suppose the scalar curvature $\overline{R}$ of $\overline{g}$ satisfies $\overline{R}\geq0$. Then for $\beta\in (-1, 0)$, there exists $c = c(\beta)>0$ such that 
\begin{align*}
\left(-\frac{\beta}{2}\right)\left(\Delta_{g}f - \frac{1}{2}|\nabla f|^{2} - \frac{\beta}{2}|\nabla f|^{2}\right)<-c.
\end{align*}
\end{lemma}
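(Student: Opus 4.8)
The plan is to rewrite the zeroth-order term of $\mathcal{L}_{\beta}$, namely $\bigl(-\tfrac{\beta}{2}\bigr)\bigl(\Delta_{g}f-\tfrac12|\nabla f|^{2}-\tfrac{\beta}{2}|\nabla f|^{2}\bigr)$, in terms of ambient curvature data, and then to exploit the $f$-special Lagrangian equation together with the conservation law for gradient steady Ricci solitons. Throughout, $f$ abbreviates $F^{*}f$ and $\nabla,\Delta_{g}$ are the operators of the induced metric $g$ on $L$, while $\overline{\nabla},\overline{\nabla}^{2},\overline{\Ric},\overline{R}$ denote ambient quantities for $\overline{g}$. First I would record the standard formula for the Laplacian of a restricted function, $\Delta_{g}f=\mathrm{tr}_{TL}\,\overline{\nabla}^{2}f+\langle\overline{\nabla}f,H_{L}\rangle$, where $H_{L}$ is the mean curvature vector. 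Since $F$ is $f$-special Lagrangian, its generalised mean curvature $K_{L}=H_{L}+\tfrac12(\overline{\nabla}f)^{\perp}$ vanishes, so $H_{L}=-\tfrac12(\overline{\nabla}f)^{\perp}$; combined with $|(\overline{\nabla}f)^{\perp}|^{2}=|\overline{\nabla}f|^{2}-|\nabla f|^{2}$ (recall $\nabla f=(\overline{\nabla}f)^{T}$) this gives
\begin{align*}
\Delta_{g}f=\mathrm{tr}_{TL}\,\overline{\nabla}^{2}f-\tfrac12\bigl(|\overline{\nabla}f|^{2}-|\nabla f|^{2}\bigr).
\end{align*}
Next, because $(M,\overline{\omega},f)$ is a gradient steady K\"ahler--Ricci soliton, $\overline{\nabla}^{2}f$ is $J$-invariant and $\overline{\Ric}+\overline{\nabla}^{2}f=0$ (in particular $\Delta_{\overline{g}}f=-\overline{R}$). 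As $L$ is Lagrangian, an orthonormal frame $e_{1},\dots,e_{m}$ of $TL$ together with $Je_{1},\dots,Je_{m}$ is an orthonormal frame of $TM$ along $L$, so $J$-invariance gives $\mathrm{tr}_{TL}\,\overline{\nabla}^{2}f=\tfrac12\,\mathrm{tr}_{TM}\,\overline{\nabla}^{2}f=\tfrac12\Delta_{\overline{g}}f=-\tfrac12\overline{R}$. Substituting,
\begin{align*}
\Delta_{g}f-\tfrac12|\nabla f|^{2}-\tfrac{\beta}{2}|\nabla f|^{2}=-\tfrac12\overline{R}-\tfrac12|\overline{\nabla}f|^{2}-\tfrac{\beta}{2}|\nabla f|^{2}.
\end{align*}

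Now I would estimate the right-hand side from above. For $\beta\in(-1,0)$ we have $-\tfrac{\beta}{2}>0$, and since $|\nabla f|^{2}\le|\overline{\nabla}f|^{2}$ this yields $-\tfrac{\beta}{2}|\nabla f|^{2}\le-\tfrac{\beta}{2}|\overline{\nabla}f|^{2}$, so the right-hand side is $\le-\tfrac12\overline{R}-\tfrac{1+\beta}{2}|\overline{\nabla}f|^{2}$. With $\varepsilon_{0}:=\min\{\tfrac12,\tfrac{1+\beta}{2}\}>0$ and using $\overline{R}\ge0$, $|\overline{\nabla}f|^{2}\ge0$, this is $\le-\varepsilon_{0}\bigl(\overline{R}+|\overline{\nabla}f|^{2}\bigr)$. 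Finally I invoke the classical identity for gradient steady Ricci solitons, $\overline{R}+|\overline{\nabla}f|^{2}\equiv C_{0}$ for a constant $C_{0}$, which is positive for any non-flat soliton (for the translator $M=\mathbb{C}^{m}$, $f(z)=2\langle z,T\rangle$ one computes $\overline{R}\equiv0$ and $|\overline{\nabla}f|^{2}\equiv 4|T|^{2}$, so $C_{0}=4|T|^{2}$). Multiplying through by $-\tfrac{\beta}{2}>0$ gives
\begin{align*}
\Bigl(-\tfrac{\beta}{2}\Bigr)\Bigl(\Delta_{g}f-\tfrac12|\nabla f|^{2}-\tfrac{\beta}{2}|\nabla f|^{2}\Bigr)\le\tfrac{\beta}{2}\varepsilon_{0}C_{0}<0,
\end{align*}
and the lemma follows with, e.g., $c:=-\tfrac14\beta\varepsilon_{0}C_{0}>0$.

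The computation itself is routine; the one genuine point is that the hypothesis $\overline{R}\ge0$ alone is insufficient, since $\overline{R}$ may vanish — one really needs the soliton conservation law $\overline{R}+|\overline{\nabla}f|^{2}=\mathrm{const}$, which both makes the lower bound uniform and excludes the degenerate Ricci-flat case (where $f$ would be constant and the statement false). Accordingly the main thing to be careful about is coupling the curvature hypothesis with that conservation law, and checking that the estimate $|\nabla f|^{2}\le|\overline{\nabla}f|^{2}$ is used in the favourable direction — which is exactly why one restricts to $\beta\in(-1,0)$, needing both $-\beta>0$ and $1+\beta>0$.
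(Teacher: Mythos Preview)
Your proof is correct and follows essentially the same route as the paper: both derive (the paper simply cites from \cite{WBfmin}) the identity $2\Delta_{g}f-|\nabla f|^{2}=-\overline{R}-|\overline{\nabla}f|^{2}$, and then use $|\nabla f|^{2}\le|\overline{\nabla}f|^{2}$ together with $\beta\in(-1,0)$ to bound the zeroth-order term above by $-\tfrac{1+\beta}{2}|\overline{\nabla}f|^{2}$. Your final step, invoking the soliton conservation law $\overline{R}+|\overline{\nabla}f|^{2}=C_{0}$ to extract a uniform positive constant, is actually more careful than the paper's proof, which stops at the pointwise bound and leaves the positive lower bound on $|\overline{\nabla}f|^{2}$ implicit.
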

\begin{proof}
By the computation in \cite[p17-18]{WBfmin}, we have $2\Delta_{g}f - |\nabla f|^{2} = -\overline{R} - |\overline{\nabla}f|^{2}\leq - |\overline{\nabla}f|^{2}$. Hence for $\beta<0$,
\begin{align*}
\Delta_{g}f - \frac{1}{2}|\nabla f|^{2} - \frac{\beta}{2}|\nabla f|^{2}\leq-\frac{1}{2} |\overline{\nabla}f|^{2} - \frac{\beta}{2}|\nabla f|^{2}\leq-\frac{1}{2}(1+\beta)|\overline{\nabla}f|^{2}.
\end{align*} 
Thus 
\begin{align*}
\left(-\frac{\beta}{2}\right)\left(\Delta_{g}f - \frac{1}{2}|\nabla f|^{2} - \frac{\beta}{2}|\nabla f|^{2}\right)<\left(\frac{\beta}{2}\right)\left(\frac{\beta+1}{2}|\overline{\nabla}f|^{2}\right)
\end{align*}
and the Lemma is proved.
\end{proof}

Now we let $(M, \overline{\omega}, f) = (\mathbb{C}^{m}, \omega_{0}, 2\langle z, -e_{m}\rangle)$, then clearly we have $\overline{R} = 0$. Let $F = F_{0}:\mathbb{R}^{m}_{0}\to\mathbb{C}^{m}$ and $F^{\lambda}_{\phi}:\mathbb{R}^{m}_{\phi}\to\mathbb{C}^{m}$ be the Grim Reaper cylinders defined in \S 3.4. Then the induced metrics $F^{*}_{0}g_{0}$ and $(F^{\lambda}_{\phi})^{*}g_{0}$ are Euclidean. By Lemma $\ref{negative}$ and the standard theory of linear elliptic operators on the Euclidean space (see Krylov {\cite{Krylov}} Chapter 11.6.2), we conclude:
\begin{proposition}\label{invwingsprop}
Let $\beta\in (-1, 0)$. Then the linear operator $\Delta_{f}: \widetilde{W}^{k, p}_{\beta}(L)\to \widetilde{W}^{k-2, p}_{\beta}(L)$ is an isomorphism for $F:L\to\mathbb{C}^{m}$ being the Grim Reaper cylinder, that is, $F = F_{0}$ or $F = F^{\lambda}_{\phi}$. Furthermore,
there exists $C>0$ independent of $u$ such that
\begin{align}
    \|u\|_{\widetilde{W}^{k, p}_{\beta}}\leq C\cdot\|\Delta_{f}u\|_{\widetilde{W}^{k-2, p}_{\beta}}.
\end{align}
\end{proposition}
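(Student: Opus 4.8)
The plan is to conjugate $\Delta_{f}$ by $e^{\pm\beta f/2}$ into a uniformly elliptic operator on Euclidean $\mathbb{R}^{m}$ whose zeroth-order coefficient is bounded above by a strictly negative constant, and then quote the standard global solvability theory for such operators.

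First I would record the reduction. Parametrizing the Grim Reaper curve by arc length $\sigma$, the induced metric $g$ on $L\cong\mathbb{R}^{m-1}\times\mathbb{R}$ is the flat Euclidean metric, so $W^{k,p}(L)=W^{k,p}(\mathbb{R}^{m})$ with the standard norm, and $\widetilde{W}^{k,p}_{\beta}(L)$ is the space defined in the excerpt. On $L=F_{0}$ or $L=F^{\lambda}_{\phi}$ one has $f=2\langle F,-e_{m}\rangle=2\log\cos(x_{m}(\sigma))+\mathrm{const}$, and since $dx_{m}/d\sigma=\cos x_{m}$ a direct computation gives $\partial_{\sigma}f=-2\sin(x_{m})$, $\partial_{\sigma}^{2}f=-2\cos^{2}(x_{m})$, and inductively every $\nabla^{j}f$ is a bounded smooth function of $\sin x_{m},\cos x_{m}$. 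In particular all covariant derivatives of $f$ are bounded on $L$, so the multiplication map $e^{-\beta f/2}\colon W^{k,p}(L)\to\widetilde{W}^{k,p}_{\beta}(L)$ noted in the excerpt is a genuine Banach-space isomorphism in both directions, and via $\Delta_{f}=e^{-\beta f/2}\circ\mathcal{L}_{\beta}\circ e^{\beta f/2}$ the claim is equivalent to showing that $\mathcal{L}_{\beta}\colon W^{k,p}(\mathbb{R}^{m})\to W^{k-2,p}(\mathbb{R}^{m})$ is an isomorphism with $\|v\|_{W^{k,p}}\le C\,\|\mathcal{L}_{\beta}v\|_{W^{k-2,p}}$.

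Next I would verify that $\mathcal{L}_{\beta}$ fits the hypotheses of the Euclidean theory. From the formula in the excerpt, $\mathcal{L}_{\beta}=\Delta_{g}\cdot-(\tfrac12+\beta)\langle\nabla f,\nabla\cdot\rangle+c_{\beta}$ with $c_{\beta}=-\tfrac{\beta}{2}\bigl(\Delta_{g}f-\tfrac12|\nabla f|^{2}-\tfrac{\beta}{2}|\nabla f|^{2}\bigr)$: the principal part is the constant-coefficient Laplacian, hence uniformly elliptic; the drift $-(\tfrac12+\beta)\nabla f$ and the potential $c_{\beta}$ are smooth with all derivatives bounded by the previous paragraph; and, crucially, since the ambient gradient is $\overline{\nabla}f=-2e_{m}$ so that $|\overline{\nabla}f|^{2}\equiv4$ and $\overline{R}\equiv0$, Lemma~\ref{negative} applies and yields $c_{\beta}\le\beta(1+\beta)<0$ uniformly on $L$ for $\beta\in(-1,0)$. (Equivalently, the explicit computation gives $c_{\beta}=\beta\bigl(1+\beta\sin^{2}x_{m}\bigr)$, whose supremum over $L$ is the negative number $\beta(1+\beta)$.)

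Finally I would invoke the standard solvability theory for uniformly elliptic second-order operators on $\mathbb{R}^{m}$ with bounded coefficients and potential bounded above by a negative constant (Krylov~\cite{Krylov}, Ch.~11.6.2): such an operator is an isomorphism $W^{2,p}(\mathbb{R}^{m})\to L^{p}(\mathbb{R}^{m})$ for $1<p<\infty$, together with the a priori bound $\|v\|_{W^{2,p}}\le C\|\mathcal{L}_{\beta}v\|_{L^{p}}$; the strict negativity of $c_{\beta}$ is precisely what rules out a kernel (it is, e.g., what obstructs constant functions, which lie in $\ker\Delta_{g}$) and replaces the coercivity available on bounded domains. The higher-order case $k>2$ follows by differentiating the equation and using boundedness of all derivatives of the coefficients, and undoing the conjugation $e^{\pm\beta f/2}$ transfers the isomorphism and the estimate back to $\Delta_{f}\colon\widetilde{W}^{k,p}_{\beta}(L)\to\widetilde{W}^{k-2,p}_{\beta}(L)$. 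The only genuine input is the sign of the zeroth-order term, which is exactly the content of Lemma~\ref{negative}; everything else is routine Euclidean elliptic theory, so I do not anticipate a serious obstacle here.
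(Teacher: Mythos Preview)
Your proposal is correct and follows exactly the paper's own approach: the paper simply notes that the induced metric on the Grim Reaper cylinder is Euclidean, invokes Lemma~\ref{negative} to get strict negativity of the zeroth-order term of the conjugated operator $\mathcal{L}_{\beta}$, and then cites Krylov~\cite{Krylov}, Ch.~11.6.2, for the isomorphism on $W^{k,p}(\mathbb{R}^{m})$. You have supplied the details (boundedness of $\nabla^{j}f$ via the explicit arc-length computation, the explicit formula $c_{\beta}=\beta(1+\beta\sin^{2}x_{m})$, and the bootstrap to higher $k$) that the paper leaves implicit.
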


\subsection{Analysis on the conical singular ends}
Let $\mathbb{R}^{m}$, $m\geq 3$, be equipped with the Euclidean metric $g_{e}$. Let $\{p_{1}, \cdots, p_{l}\}\subset\mathbb{R}^{m}$ be a finite set of points. Then $\mathbb{R}^{m}\setminus \{p_{1}, \cdots, p_{l}\}$ can be viewed as a Riemannian manifold with $l$ CS ends $E_{j} := \overline{B_{\delta}(p_{j})}\setminus\{p_{j}\}$, with rate $\mu = 3$ and cone $C_{j} = (S^{m-1}\times\mathbb{R}, \:g_{C_{j}} = dr^{2} + r^{2}g_{S^{m-1}})$, $j = 1, \cdots, l$. We would like to study the mapping property of the weighted Laplacian $\Delta_{f}$ on the ends $E_{j}$.

To do this, we first study the mapping property  for the Laplacian $\Delta_{g}$ on Riemannian manifolds with CS ends, which should be well-known for the experts. As a result, most parts of this subsection are not new, as one can find more comprehensive treatments in this topic in {\cite{LM}}, {\cite{JoyceCS1}}, {\cite{Marshall}}, {\cite{PaciniUnifEst}}, to name a few.

Let $(L, g)$ be a Riemannian $m$-manifold with one CS end $E\subset L$. Then by definition there exists $\check{\epsilon}>0$, a compact Riemannian $(m-1)$-manifold $(\Sigma, g_{\Sigma})$, and a diffeomorphism $\check{\phi}: \Sigma\times(0, \check{\epsilon}]\to E$ such that
\begin{align*}
    |\nabla^{k}(\check{\phi}^{*}g - g_{C})|_{g_{C}} = O(r^{\mu-k})\quad\mbox{as $r\to 0$}
\end{align*}
for some $\mu\in\mathbb{R}$, where $g_{C} = dr^{2} + r^{2}g_{\Sigma}$. Let $\check{r}: L\to\mathbb{R}_{+}$ be a smooth function so that
$\check{r}\big|_{L\setminus E_{1}} = 1$ and $\check{r}\big|_{E}\circ\check{\phi}(r,\sigma) = r$, where $E^{1}:=\{\:x\in L\:|\:d_{g}(x, E)\leq 1\:\}$ is a neighbourhood of $E$ in $L$. Namely, $\check{r}$ is a smooth function that coincides with the coordinate function $r\in(0, \check{\epsilon}]$ on $E$. We will call $\check{r}$ a {\it radius function}.

Let $\Omega\subset L$ be an open domain containing $E$, with smooth boundary $\partial\Omega$. Define weighted $C^{k}$-spaces $C^{k}_{\mu}(\Omega)$ to be the space of $k$-times continuously differentiable functions on $\Omega$ so that the norm
\begin{align*}
    \|u\|_{C^{k}_{\nu}} := \sup_{\Omega}|\check{r}^{-\nu + j}\nabla^{j} u|_{g}
\end{align*}
is finite, where $|\cdot|$ and the connection $\nabla$ are computed using the CS metric $g$. The subspace $C^{k}_{\nu, \mathcal{D}}(\Omega)\subset C^{k}_{\nu}$ is defined to be the subspace of functions that vanish on $\partial\Omega$.

We also define the weighted Sobolev spaces $L^{k, p}_{\nu}(\Omega)$ to be the space of locally integrable $k$-times weakly differentiable functions such that the norm
\begin{align*}
    \|u\|_{L^{k, p}_{\nu}} := \left(\sum_{j=0}^{k}\int_{\Omega}|\check{r}^{-\nu+j}\nabla^{j}u|_{g}\:\check{r}^{-m}dV_{g}\right)^{\frac{1}{p}}
\end{align*}
is finite. Again, the norms $|\cdot|$ and connection $\nabla$ are computed using the metric $g$.The subspace $L^{k, p}_{\nu, \mathcal{D}}(\Omega)$ is defined to be the completion of $C^{k}_{\nu, \mathcal{D}}(\Omega)$ with respect to the $L^{k, p}_{\nu}$-norm above. The spaces $L^{k, p}_{\nu}(\Omega)$ and $L^{k, p}_{\nu, \mathcal{D}}(\Omega)$ are Banach spaces.

Recall the following classical result about the mapping property for Laplace operator $\Delta_{g}$.
\begin{proposition}\label{DirLap}
Let $m\geq 3$ and $\nu\in (2-m, 0)$. Then for any $k\geq 2$, $p>1$,
\begin{align}\label{IsomLap}
    \Delta_{g}: L^{k, p}_{\nu, \mathcal{D}}(\Omega)\to L^{k-2, p}_{\nu-2}(\Omega)
\end{align}
is an isomorphism.
\end{proposition}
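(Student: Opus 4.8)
The plan is to follow the standard template for the Laplacian on a manifold with conical ends: establish the Fredholm property by checking that the weight $\nu$ avoids the indicial roots of the model cone operator, deduce injectivity from the maximum principle, and obtain surjectivity by a duality argument which replaces $\nu$ by a conjugate weight that again lies in $(2-m,0)$. All the ingredients are available in \cite{LM}, \cite{JoyceCS1}, \cite{Marshall}, \cite{PaciniUnifEst}; the work is to assemble them and to carry out the indicial root computation.

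First I would invoke the weighted elliptic theory of Lockhart--McOwen type. Since $\Omega$ carries the CS end $E$ modelled on the cone $\bigl(\Sigma\times(0,\infty),\,dr^{2}+r^{2}g_{\Sigma}\bigr)$ together with a compact smooth boundary $\partial\Omega$ on which the Dirichlet condition is elliptic, the operator $\Delta_{g}\colon L^{k,p}_{\nu,\mathcal{D}}(\Omega)\to L^{k-2,p}_{\nu-2}(\Omega)$ is Fredholm whenever $\nu$ is not an indicial root of the cone Laplacian. Separating variables one computes $\Delta_{g_{C}}(r^{\alpha}\varphi)=r^{\alpha-2}\bigl((\alpha^{2}+(m-2)\alpha)\varphi+\Delta_{g_{\Sigma}}\varphi\bigr)$, so the indicial roots are exactly the $\alpha$ for which $\alpha^{2}+(m-2)\alpha$ is an eigenvalue of $-\Delta_{g_{\Sigma}}$; the eigenvalue $0$ contributes $\alpha\in\{0,\,2-m\}$, and each positive eigenvalue contributes one root strictly below $2-m$ and one strictly above $0$. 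Hence, since $m\geq 3$, there is no indicial root in the open interval $(2-m,0)$, and $\Delta_{g}$ is Fredholm there. The same theory gives the regularity statement I will need: if $u\in L^{k,p}_{\nu,\mathcal{D}}(\Omega)$ with $\Delta_{g}u=0$ and $\nu\in(2-m,0)$, then, since the first indicial root $\geq\nu$ is $0$, $u$ lies in $L^{k,p}_{\nu',\mathcal{D}}(\Omega)$ for every $\nu'<0$ and is asymptotic to a constant at the cone point; in particular $u$ is bounded near the cone point.

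Injectivity then follows from the maximum principle: such a $u$ is smooth and harmonic on $\Omega$, bounded near the isolated conical point and zero on $\partial\Omega$, so the singularity is removable, $u$ extends across it, and $u\equiv 0$. (Equivalently, cutting off by $\chi_{\delta}$ vanishing on $\{r\leq\delta\}$ and integrating by parts gives $\int_{\Omega}|\nabla u|^{2}=\lim_{\delta\to 0}\bigl(-2\int_{\Omega}u\,\chi_{\delta}\langle\nabla u,\nabla\chi_{\delta}\rangle\bigr)=0$, using $u|_{\partial\Omega}=0$ and $\int_{\Omega}|\nabla\chi_{\delta}|^{2}=O(\delta^{m-2})\to 0$.) For surjectivity I would identify the cokernel with the kernel of the Banach adjoint. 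Under the $dV_{g}$-pairing the dual weight of $\nu-2$ is $-m-(\nu-2)=2-m-\nu$, so any element of the cokernel is represented by a distribution $v$ in the corresponding weighted space with $\langle v,\Delta_{g}u\rangle=0$ for all $u\in L^{k,p}_{\nu,\mathcal{D}}(\Omega)$. Testing against $u$ supported in the interior forces $\Delta_{g}v=0$, so $v$ is smooth and harmonic by elliptic regularity; testing against $u$ with arbitrary normal derivative on $\partial\Omega$ forces $v|_{\partial\Omega}=0$; and weighted elliptic regularity places $v$ in $L^{k,p'}_{2-m-\nu,\mathcal{D}}(\Omega)$, where $2-m-\nu\in(2-m,0)$ precisely because $\nu\in(2-m,0)$. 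The injectivity argument now applies verbatim to $v$, so $v\equiv 0$, the cokernel vanishes, and $\Delta_{g}$ is onto. Since $\Delta_{g}$ is Fredholm, injective and surjective, it is a bounded bijection of Banach spaces, hence an isomorphism by the open mapping theorem; in particular one obtains the estimate $\|u\|_{L^{k,p}_{\nu}}\leq C\,\|\Delta_{g}u\|_{L^{k-2,p}_{\nu-2}}$.

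I expect the only genuinely delicate point to be the bookkeeping in the duality step. Because the measure in the definition of $L^{k,p}_{\nu}$ is $\check r^{-m}\,dV_{g}$ while Green's identity is taken against $dV_{g}$, the conjugate weight comes out as $2-m-\nu$ rather than $-\nu$, and one must also verify that the boundary term at $\partial\Omega$ forces the adjoint boundary condition to be Dirichlet; both are standard but should be spelled out. Everything else is cited Fredholm theory, the separation of variables on the cone, or the maximum principle, so no further serious difficulty is anticipated.
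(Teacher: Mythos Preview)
Your proposal is correct and follows essentially the same route as the paper: both use the absence of indicial roots in $(2-m,0)$ to upgrade decay to boundedness at the cone point and then invoke the maximum principle for injectivity, and both obtain surjectivity by the self-adjointness/duality that sends $\nu$ to the conjugate weight $2-m-\nu\in(2-m,0)$ (this is exactly the paper's Lemma~\ref{IBP}). Your write-up is in fact more explicit than the paper's---you spell out the indicial root computation, the Fredholm input, and the boundary bookkeeping in the duality step---but the underlying argument is the same.
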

\begin{proof}
It is well-known that there is no indicial roots of $\Delta_{g}$ in the interval $(2-m, 0)$, therefore, if $|u|\leq Cr^{\delta}$ for some $\delta>2-m$, then $u$ must be bounded as $r\to 0$. In particular, if $u\in L^{k, p}_{\nu, \mathcal{D}}(\Omega)$ for $\nu>2-m$ and $\Delta_{g}u = 0$, then $u$ must extend to be defined in the unweighted Sobolev space $L^{k, p}(\Omega)$ with zero Dirichlet boundary condition. By maximum principle, $u \equiv 0$, so $\Delta_{g}: L^{k, p}_{\nu, \mathcal{D}}(\Omega)\to L^{k-2, p}_{\nu-2}(\Omega)$ is injective for $\nu>2-m$.

For surjectivity, note that $\Delta_{g}$ is essentially self-adjoint and we have the following duality:
\begin{lemma}\label{IBP}
Let $p, q>1$ with $\frac{1}{p} + \frac{1}{q} = 1$, and $\nu\in\mathbb{R}$. Suppose $u\in L^{2, p}_{\nu, \mathcal{D}}(\Omega)$ and $v\in L^{2, q}_{-\nu + 2 - m}(\Omega)$. Then
\begin{align}
    \int_{\Omega}\Delta_{g}u\cdot v\:dV_{g} = -\int_{\Omega}\langle\nabla u, \nabla v\rangle_{g}\:dV_{g} = \int_{\Omega}u\cdot\Delta_{g}v\:dV_{g}.
\end{align}
\end{lemma}
It follows that $\Delta_{g}: L^{k, p}_{\nu, \mathcal{D}}(\Omega)\to L^{k-2, p}_{\nu-2}(\Omega)$ is surjective for $\nu<0$. Hence the isomorphism $(\ref{IsomLap})$ holds for $\nu\in (2-m, 0)$.

\end{proof}

Now let $F:L\to\mathbb{C}^{m}$ be a Lagrangian translating soliton with isolated conical singularities. For instance, $F = F_{0}$ or $F_{\phi}$ and $L = \mathbb{R}^{m}\setminus\{p_{1}, \cdots, p_{l}\}$. We prove an isomorphism theorem for $\Delta_{f} = \Delta_{g} - \frac{1}{2}\langle\nabla f, \nabla\cdot\rangle$, where $f(x) = 2\langle F(x), T\rangle$, by using a trick performed in the proof of \cite[Thoerem 5.3]{JoyceCS1} to transform $\Delta_{f}$ into a Laplace operator of some CS Riemannian metric, and then use Proposition $\ref{DirLap}$.

\begin{proposition}\label{invCS}
Let $m\geq 3$. Let $F:L\to\mathbb{C}^{m}$ be a Lagrangian translating soliton with isolated conical singularities $p_{1}, \cdots, p_{e}$ with cones $C_{j} = (\Sigma_{j}\times\mathbb{R}_{>0},\: g_{C_{j}})$, $j= 1, \cdots, e$, with induced metric $g$. By scaling, we may assume that the neighbourhoods $E^{1}_{j}$ of the CS ends $E_{j}$ of $L$ are disjoint. Then the radius function $\check{r}$ is well-defined on $L$. 

Let $\Omega_{i}\subset L$ be an open domain with smooth boundary such that $\Omega_{i}\cap E^{1}_{j} = \phi$ if $i\neq j$.
Then for any $k\geq 2$, $p>1$ and $\nu\in (2-m, 0)$,
\begin{align}
    \Delta_{f}: L^{k, p}_{\nu, \mathcal{D}}(\Omega_{i})\to L^{k-2, p}_{\nu-2}(\Omega_{i})
\end{align}
is an isomorphism.
\end{proposition}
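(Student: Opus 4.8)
The plan is to use the conformal rescaling trick from the proof of \cite[Theorem 5.3]{JoyceCS1}. Since $\Delta_{f} = \Delta_{g} - \tfrac12\langle\nabla f,\nabla\cdot\rangle$ carries no zeroth-order term, it can be realised, up to multiplication by a positive function, as the Laplace--Beltrami operator of a metric conformal to $g$. Concretely, set
\[
\varphi := -\frac{1}{2(m-2)}\,f,\qquad \tilde{g} := e^{2\varphi}g = e^{-f/(m-2)}g,
\]
which is legitimate because $m\geq 3$. Using the conformal change formula $\Delta_{\tilde{g}}u = e^{-2\varphi}\bigl(\Delta_{g}u + (m-2)\langle\nabla\varphi,\nabla u\rangle_{g}\bigr)$ together with $(m-2)\nabla\varphi = -\tfrac12\nabla f$, one obtains the operator identity $\Delta_{f}u = e^{2\varphi}\,\Delta_{\tilde{g}}u$. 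First I would record this identity and check that $(L,\tilde{g})$ is again a CS Riemannian manifold with the same cones $C_{j}$ (up to a constant rescaling of the link metrics) and a positive rate: from $F\circ\check{\phi}_{j} = p_{j} + \iota_{C_{j}} + O(r^{\mu_{j}-1})$ with $\mu_{j}=3$ one gets $|\nabla^{k}(f\circ\check{\phi}_{j} - f(p_{j}))|_{g_{C_{j}}} = O(r^{1-k})$, so $e^{2\varphi}$ is smooth up to the singular points with $e^{2\varphi}\circ\check{\phi}_{j} = e^{2\varphi(p_{j})}(1+O(r))$, whence $|\check{\phi}_{j}^{*}\tilde{g} - e^{2\varphi(p_{j})}g_{C_{j}}|_{g_{C_{j}}} = O(r)$.

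Next I would observe that $e^{2\varphi}$ and $e^{-2\varphi}$ are positive smooth functions, bounded above and below by positive constants on $\overline{\Omega_{i}}$ (which has compact closure once $p_{i}$ is filled in, and on which $f$ is bounded), so that multiplication by $e^{2\varphi}$ is a Banach space isomorphism of $L^{k-2,p}_{\nu-2}(\Omega_{i})$ onto itself. Moreover the weighted Sobolev spaces $L^{k,p}_{\nu,\mathcal{D}}(\Omega_{i})$ and $L^{k-2,p}_{\nu-2}(\Omega_{i})$ built from $g$ coincide, with uniformly equivalent norms, with the analogous spaces built from $\tilde{g}$: the two metrics are uniformly equivalent, their radius functions are comparable, their volume forms differ by the bounded factor $e^{m\varphi}$, and the difference of their Levi--Civita connections is governed by $\nabla\varphi$ with the correct weights, so the extra terms arising when one commutes $e^{2\varphi}$ past $\nabla$ are absorbed. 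Granting this, Proposition \ref{DirLap} applied to the CS manifold $(L,\tilde{g})$ gives that $\Delta_{\tilde{g}}:L^{k,p}_{\nu,\mathcal{D}}(\Omega_{i})\to L^{k-2,p}_{\nu-2}(\Omega_{i})$ is an isomorphism for all $\nu\in(2-m,0)$, $k\geq 2$, $p>1$; composing with the isomorphism ``multiplication by $e^{2\varphi}$'' shows that $\Delta_{f} = e^{2\varphi}\,\Delta_{\tilde{g}}$ is an isomorphism, which is the assertion.

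The main point requiring care — more a bookkeeping matter than a genuine obstacle — is the verification that the two families of weighted Sobolev norms are uniformly equivalent at order $k>0$; this is precisely where the near-cone decay $|\nabla^{k}(f-f(p_{j}))|_{g_{C_{j}}} = O(r^{1-k})$ is used, so that all lower-order contributions land in the correct weighted spaces, and everything else is formal. One could also sidestep this: on the bounded domain $\Omega_{i}$ the difference $\Delta_{f} - \Delta_{g}$ is a first-order operator with bounded coefficients, hence factors through the compact embedding $L^{k-1,p}_{\nu-1}(\Omega_{i})\hookrightarrow L^{k-2,p}_{\nu-2}(\Omega_{i})$, so $\Delta_{f}$ is Fredholm of index zero, and its injectivity then follows from the strong maximum principle for $\Delta_{f}$ on the filled-in domain together with the removable-singularity estimate for solutions lying in $L^{k,p}_{\nu}$ with $\nu>2-m$, exactly as in the proof of Proposition \ref{DirLap}.
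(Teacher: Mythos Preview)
Your proposal is correct and follows essentially the same route as the paper: both define the conformal metric $\tilde g = e^{f/(2-m)}g$, verify it is still a CS metric with cone $e^{f(p_i)/(2-m)}g_{C_i}$, use the identity $\Delta_f = e^{f/(2-m)}\Delta_{\tilde g}$, observe that multiplication by the bounded positive factor $e^{f/(2-m)}$ is an automorphism of $L^{k-2,p}_{\nu-2}(\Omega_i)$ and that the weighted spaces for $g$ and $\tilde g$ coincide, and then invoke Proposition~\ref{DirLap}. Your additional remark on the Fredholm-index-zero alternative is a valid sidestep but is not in the paper.
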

\begin{proof}
Since $m\geq 3$, define a conformal transformation of the metric $g$ by $\widetilde{g}:=e^{\frac{f}{2-m}}g$. We claim that $\widetilde{g}$ is a CS metric asymptotic to $(C_{i},\:\widetilde{g}_{C_{i}} = e^{\frac{f(p_{i})}{2-m}}g_{C_{i}})$. Indeed, let $\check{\phi}_{i}:\Sigma_{i}\times (0, \check{\epsilon}]\to E_{i}$ be the diffeomorphism given by the definition of CS metric, then
\begin{align*}
    |\check{\phi}^{*}_{i}\widetilde{g} - \widetilde{g}_{C_{i}}|_{\widetilde{g}_{C_{i}}} &= |e^{\frac{f\circ\check{\phi}_{i}}{2-m}}\check{\phi}^{*}_{i}{g} - e^{\frac{f(p_{i})}{2-m}}g_{C_{i}}|_{\widetilde{g}_{C_{i}}}\\
    &\leq|e^{\frac{f\circ\check{\phi}_{i}}{2-m}}(\check{\phi}^{*}_{i}{g} - g_{C_{i}})|_{\widetilde{g}_{C_{i}}} + | (e^{\frac{f\circ\check{\phi}_{i}}{2-m}} - e^{\frac{f(p_{i})}{2-m}}) g_{C_{i}}|_{\widetilde{g}_{C_{i}}}\\
    &\leq O(r^{\mu}) + O(r^{\mu})\quad\mbox{as $r\to 0$},
\end{align*}
if $|\check{\phi}_{i}^{*}g - g_{C_{i}}|_{g_{C_{i}}} = O(r^{\mu})$ as $r\to 0$. The higher order estimates follow similarly. Hence the claim is proved by replacing $r$ by $e^{\frac{f(p_{i})}{4-2m}}r$.

Now it is straightforward to check that the Laplacian $\Delta_{\widetilde{g}}$ of the conformal metric $\widetilde{g}$ satisfies
\begin{align*}
    e^{\frac{f}{2-m}}\Delta_{\widetilde{g}} = \Delta_{f},
\end{align*}
and the weighted Sobolev spaces $L^{k, p}_{\nu}(\Omega_{i})$ defined using $g$ and $\widetilde{g}$ are equivalent. Since multiplication by $e^{\frac{f}{2-m}}$ gives an automorphism of $L^{k-2, p}_{\nu-2}(\Omega_{i})$, $\Delta_{f}: L^{k, p}_{\nu, \mathcal{D}}(\Omega_{i})\to L^{k-2, p}_{\nu-2}(\Omega_{i})$ is an isomorphism if and only if $\Delta_{\widetilde{g}}$ is. The proposition now follows from Proposition $\ref{DirLap}$.
\end{proof}

\subsection{Analysis on the wings}\label{invwings}
We now combine the results of previous subsections to prove isomorphism theorem for $\Delta_{f}$ on $\mathbb{R}^{m}\setminus\{p_{1}, \cdots, p_{e}\}$, equipped with Euclidean metric $g$. Here, we view the points $p_{i}$'s as isolated conical singularities with cone $(C_{i} = S^{m-1}\times\mathbb{R}_{+},\:g_{C_{i}} = g)$ and rate $\mu = 3$. By scaling the metric $g$, we may assume $d_{g}(p_{i}, p_{j})\geq 2$ and the radial function $\check{r}:\mathbb{R}^{m}\to\mathbb{R}_{+}$ is well-defined.

Consider the weighted Sobolev spaces $\widecheck{W}^{k, p}_{\beta, \nu}(\mathbb{R}^{m})$ of locally integrable $k$-times weakly differentiable functions so that the norm
\begin{align*}
    \|u\|_{\widecheck{W}^{k, p}_{\beta, \nu}} := \left(\sum_{j=0}^{k}\int_{\mathbb{R}^{m}}|e^{\frac{\beta f}{2}}\check{r}^{-\nu + j}\nabla^{j}u|^{p}\:\check{r}^{-m}dV_{g_{0}}\right)^{\frac{1}{p}}
\end{align*}
is finite. Note that since $e^{\frac{\beta f}{2}}$ is bounded near the punctures $p_{i}$, the $\widecheck{W}^{k, p}_{\beta, \nu}$-norm is equivalent to the $L^{k, p}_{\nu}$-norm near the CS ends, and since $\check{r} = 1$ away from the punctures, the $\widecheck{W}^{k, p}_{\beta, \nu}$-norm is equivalent to the $\widetilde{W}^{k, p}_{\beta}$-norm away from the CS ends.

We can now prove the invertibility of $\Delta_{f}$.
\begin{proposition}\label{isowings}
Suppose $m\geq 3$. Let $\beta\in (-1, 0)$ and $\nu\in (2-m, 0)$. Then for any $k\geq 2$, $p>1$,
\begin{align}
    \Delta_{f}: \widecheck{W}^{k, p}_{\beta, \nu}(\mathbb{R}^{m})\to \widecheck{W}^{k-2, p}_{\beta, \nu-2}(\mathbb{R}^{m})
\end{align}
is an isomorphism.
\end{proposition}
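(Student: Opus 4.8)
The plan is to combine Proposition~\ref{invwingsprop} (invertibility of $\Delta_f$ on the Grim Reaper cylinder $\mathbb{R}^m$ with the weighted space $\widetilde{W}^{k,p}_\beta$) with Proposition~\ref{invCS} (invertibility of $\Delta_f$ on each conical singular end, with Dirichlet condition and rate $\nu\in(2-m,0)$) by a standard parametrix patching argument. First I would fix a partition of unity $\{\chi_0,\chi_1,\dots,\chi_e\}$ subordinate to the cover of $\mathbb{R}^m$ by the ``interior/wing'' piece $L\setminus\bigcup_i\overline{E^{1/2}_i}$ and the conical neighbourhoods $E^1_i$ of the punctures $p_i$, chosen so that $\chi_i\equiv 1$ on a smaller neighbourhood $E^{1/2}_i$ and $\chi_i$ has support in $E^1_i$. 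On the pieces $\Omega_i\supset E^1_i$ one has the inverse $R_i:=(\Delta_f|_{L^{k,p}_{\nu,\mathcal D}(\Omega_i)})^{-1}$ from Proposition~\ref{invCS}, and on the complementary (wing) region one uses the global inverse $R_\infty:=(\Delta_f\text{ on }\widetilde{W}^{k,p}_\beta(\mathbb{R}^m))^{-1}$ from Proposition~\ref{invwingsprop}. The approximate inverse is $G:=R_\infty\circ(\text{multiplication by }\psi_\infty)+\sum_i R_i\circ(\text{multiplication by }\psi_i)$ for a second partition of unity $\{\psi_\bullet\}$ with $\psi_\bullet\equiv 1$ on $\mathrm{supp}\,\chi_\bullet$; then $\Delta_f\circ G=\mathrm{Id}+S$ where $S$ is built out of commutators $[\Delta_f,\chi_\bullet]$, which are first-order operators supported in the (compact) transition collars $E^1_i\setminus E^{1/2}_i$, hence by elliptic estimates $S$ is a compact operator on $\widecheck{W}^{k-2,p}_{\beta,\nu-2}(\mathbb{R}^m)$ with small norm after shrinking the overlaps; so $\mathrm{Id}+S$ is invertible and $\Delta_f$ is surjective, and a symmetric argument (or a Fredholm-index/duality count) gives injectivity.

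The key point that makes the gluing of norms work is the remark already recorded in the excerpt: $e^{\beta f/2}$ is bounded near each puncture $p_i$, so the $\widecheck{W}^{k,p}_{\beta,\nu}$-norm is \emph{equivalent} to the $L^{k,p}_\nu$-norm on each $E^1_i$; and $\check r\equiv 1$ away from the punctures, so the $\widecheck{W}^{k,p}_{\beta,\nu}$-norm is equivalent to the $\widetilde{W}^{k,p}_\beta$-norm on the wings. Consequently $G$ really does map $\widecheck{W}^{k-2,p}_{\beta,\nu-2}\to\widecheck{W}^{k,p}_{\beta,\nu}$ boundedly: the pieces $\psi_i u$ land in $L^{k-2,p}_{\nu-2}(\Omega_i)$, get sent by $R_i$ into $L^{k,p}_{\nu,\mathcal D}(\Omega_i)$, and after multiplying by $\chi_i$ (which kills the boundary values, so Dirichlet data cause no trouble) lie in $\widecheck{W}^{k,p}_{\beta,\nu}$; and $\psi_\infty u$ lands in $\widetilde{W}^{k-2,p}_\beta$ away from the punctures, where $R_\infty$ is bounded. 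Injectivity can alternatively be obtained directly: if $\Delta_f u=0$ with $u\in\widecheck{W}^{k,p}_{\beta,\nu}$, then on the wings $u\in\widetilde{W}^{k,p}_\beta$ with $\Delta_f u=0$, so $u\equiv0$ there by Proposition~\ref{invwingsprop}; in particular $u$ vanishes on the collars $\partial E^{1/2}_i$, so on each $\Omega_i$ it solves $\Delta_f u=0$ with zero Dirichlet data and lies in $L^{k,p}_{\nu,\mathcal D}(\Omega_i)$, hence $u\equiv0$ by Proposition~\ref{invCS}; patching, $u\equiv0$ globally.

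The main obstacle I expect is \emph{matching the weight functions across the transition collar}, i.e.\ making precise that the cut-off error operator $S$ genuinely has small operator norm (not merely that it is compact) so that one gets an honest inverse rather than just a Fredholm statement, and bookkeeping the fact that $R_i$ produces only a \emph{Dirichlet} inverse so the functions $\chi_i R_i(\psi_i u)$ must be glued to $\chi_\infty R_\infty(\psi_\infty u)$ in a way that is consistent on the overlap. This is exactly the situation handled by Pacini's cut-off argument in \cite[\S\S\,4--5]{PaciniUnifEst}, so the cleanest route is to invoke that machinery: verify the hypotheses (the relevant indicial/critical weights $\nu\in(2-m,0)$ are noncritical for $\Delta_g$ on the cone $S^{m-1}\times\mathbb{R}_{>0}$, and $\beta\in(-1,0)$ is admissible on the wings by Lemma~\ref{negative}) and quote the abstract gluing of invertible operators. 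A subtlety worth a sentence is the interplay with the zeroth-order term: the conformal trick of Proposition~\ref{invCS} replaces $\Delta_f$ on $\Omega_i$ by $e^{f/(2-m)}\Delta_{\widetilde g}$ with $\widetilde g$ a genuine CS metric, while on the wings Lemma~\ref{negative} shows $\Delta_f$ behaves like a shifted Laplacian with strictly negative potential; one must check these two descriptions are compatible on the overlap, but since the overlap is compact and away from both $r=0$ and spatial infinity this reduces to standard interior elliptic estimates.
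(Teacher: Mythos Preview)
Your surjectivity strategy is essentially correct and in fact closer to the paper's than you realise. The paper does not build a parametrix at all: it simply decomposes the \emph{data} as $h = \sum_i \varphi_i h + (1-\sum_i\varphi_i)h$, solves $\Delta_f u_i = \varphi_i h$ on $\Omega_i$ via Proposition~\ref{invCS} and $\Delta_f \widetilde u = (1-\sum_i\varphi_i)h$ on the full $\mathbb{R}^m$ via Proposition~\ref{invwingsprop}, and sets $u = \sum_i u_i + \widetilde u$. Because the local inverses are exact, there is no commutator error $S$ to absorb, and the Pacini-style iteration you anticipate as the ``main obstacle'' is simply not needed here. Your version with a second partition of unity and an approximate inverse would also work, but it is more machinery than the problem requires.

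Your direct injectivity argument, however, has a genuine gap. You write: ``on the wings $u\in\widetilde{W}^{k,p}_\beta$ with $\Delta_f u=0$, so $u\equiv0$ there by Proposition~\ref{invwingsprop}.'' But Proposition~\ref{invwingsprop} concerns $\Delta_f$ on $\widetilde{W}^{k,p}_\beta$ of the \emph{entire} Grim Reaper cylinder $\mathbb{R}^m$; it says nothing about harmonic functions on the complement of a collection of balls, where no boundary condition has been imposed. Knowing only that $u$ lies in the weighted space away from the punctures does not place $u$ in the domain on which that proposition applies. The paper's argument supplies exactly the missing step: using the conformal rewriting $\Delta_f = e^{f/(2-m)}\Delta_{\widetilde g}$ with $\widetilde g$ a CS metric, together with the absence of indicial roots of the Laplacian in $(2-m,0)$, one shows that any $u\in\widecheck{W}^{k,p}_{\beta,\nu}$ with $\Delta_f u=0$ and $\nu>2-m$ is in fact \emph{bounded} near each puncture $p_i$. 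This regularity upgrade means $u$ extends across the punctures to lie in $\widetilde{W}^{k,p}_\beta(\mathbb{R}^m)$ on the whole space, and only then does Proposition~\ref{invwingsprop} yield $u\equiv 0$. Without this removable-singularity step your injectivity argument does not go through.
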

\begin{proof}
First we show injectivity. Let $u\in \widecheck{W}^{k, p}_{\beta, \nu}(L)$ with $\Delta_{f}u = 0$. We claim that $u$ actually belongs to $\widetilde{W}^{k, p}_{\beta}$ if $\nu>2-m$. In fact, as in the proof of Proposition $\ref{invCS}$, $\Delta_{\widetilde{g}}u = e^{\frac{f}{m-2}}\Delta_{f}u = 0$, where $\widetilde{g} = e^{\frac{f}{2-m}}g$ is a CS Riemannian metric. Now the proof follows the same lines of the injectivity part of Proposition $\ref{DirLap}$. Since $\nu>2-m$, $u$ is actually bounded on the CS ends, and therefore $u$ belongs to $\widetilde{W}^{k, p}_{\beta}$. From Proposition $\ref{invwingsprop}$, $u = 0$, so $\Delta_{f}$ is injective.

Next we prove surjectivity. Given $h\in \widecheck{W}^{k-2, p}_{\beta,\nu-2}(L)$.
For each CS end $E_{i}$ of $L$, let $\Omega_{i}\supset E_{i}$ be a smooth open subset which does not intersect $E_{j}$ if $i\neq j$. Let $\varphi_{i}$ be the smooth cut-off function satisfies $\varphi_{i}\big|_{E_{i}}\equiv 1$ and $\varphi_{i}\big|_{\mathbb{R}^{m}\setminus\Omega_{i}}\equiv 0$. Then 
\begin{align*}
    h = \sum_{i=1}^{e}\varphi_{i}h + \left(1-\sum_{i=1}^{e}\varphi_{i}\right)h.
\end{align*}
By Proposition $\ref{invCS}$, for each $i = 1, \cdots, e$, there exists $u_{i}\in \widecheck{W}^{k, p}_{\nu, \mathcal{D}}(\Omega_{i})$ solving  $\Delta_{f}u_{i} = \varphi_{i} h$, and by proposition $\ref{invwingsprop}$, there exists $\widetilde{u}\in \widetilde{W}^{k, p}_{\beta}(L)$ so that $\Delta_{f}\widetilde{u} = (1-\sum_{i=1}^{e}\varphi_{i})h$. Finally, set $u = \sum_{i=1}^{e}u_{i} + \widetilde{u}$, then it is easy to see that $u\in \widecheck{W}^{k, p}_{\beta, \nu}(L)$ and $\Delta_{f}u = h$. The proof is completed.

\end{proof}

\subsection{Analysis on the Lawlor necks}\label{invneck}
Consider the special Lagrangian Lawlor neck $N: S^{m-1}\times\mathbb{R}\to\mathbb{C}^{m}$, $m\geq 3$,  defined in \S3.2 with phase $\overline{\theta}$, equipped  with induced AC metric $g$. Then $N$ is not an $f$-special Lagrangian, indeed, the  Lagrangian angle $\theta_{f}$ with respect to $\Omega_{f}$ is given by $\theta_{f} = \overline{\theta} - \langle N, JT\rangle$. Without loss of generality, we may assume $\overline{\theta} = 0$.

Since $N$ is AC with cone $C = \Pi_{0}\cup \Pi_{1}$ and rate $\lambda = 2-m$, there exists a compact subset 
$K\subset S^{m-1}\times\mathbb{R}$, a constant $\hat{R}>0$ and a diffeomorphism $\hat{\phi}:(S^{m-1}\cup S^{m-1})\times[\hat{R}, \infty)$ so that
\begin{align*}
    |\nabla^{k}(N\circ\hat{\phi} - \iota_{C})|_{g_{C}} = O(r^{1-m-k})\quad\mbox{as $r\to\infty$}.
\end{align*}
Define a {\it radius functions} $\hat{r}: S^{m-1}\times\mathbb{R}\to\mathbb{R}_{+}$ to be a smooth function such that
\begin{align*}
    \hat{r}\big|_{(S^{m-1}\times\mathbb{R})\setminus K}\circ\hat{\phi}(\sigma, r) = r,
\end{align*}
that is, $\hat{r}$ coincides with the coordinate function $r$ on the AC ends of $S^{m-1}\times\mathbb{R}$, under the indentification of $\hat{\phi}$.

Since the size of the neck region in the approximate solution $X_{t}$ is dependent on a parameter $t\in(0, \delta)$, we will consider the weighted Sobolev spaces of the rescaled Lawlor neck $N_{t} := tN$. The induced metric is $g_{t}:=t^{2}g$ and the rescaled radius function is $\hat{r}_{t}:=t\hat{r}$.
\begin{definition}
For $k\geq 0$, $p>1$, the weighted Sobolev space $\widehat{W}^{k, p}_{\gamma, t}(S^{m-1}\times\mathbb{R})$ is defined to be the completion of $C^{\infty}_{c}(S^{m-1}\times\mathbb{R})$ with respect to the norm
\begin{align*}
\|u\|_{\widehat{W}^{k, p}_{\gamma, t}} := \left(\sum_{j=1}^{k}\int_{S^{m-1}\times\mathbb{R}}|\hat{r}_{t}^{-\gamma +j}\nabla^{j}u|^{p}_{g_{t}}\:\hat{r}_{t}^{-m}dV_{g_{t}}\right)^{\frac{1}{p}}.
\end{align*}
\end{definition}

For $t$ sufficiently small, we shall study the invertibility of the linear operator
\begin{align*}
\mathcal{L}_{g_{t}} = e^{-\frac{f_{t}}{2}}(\:\cos\theta_{f_{t}}\:\Delta_{f_{t}} - \sin\theta_{f_{t}}\langle\nabla\theta_{f_{t}}, \nabla\cdot\rangle_{g_{t}})
\end{align*}
on the scaled Lawlor neck $N_{t}:S^{m-1}\times\mathbb{R}\to\mathbb{C}^{m}$ with the induced AC metric $g_{t}$, where $f_{t} =t\cdot f$, $\Delta_{f_{t}} = \Delta_{g_{t}} - \frac{1}{2}\langle\nabla f_{t}, \nabla\cdot\rangle_{g_{t}}$, and $\theta_{f_{t}} = - \langle N_{t}, JT\rangle$. 
We first recall the mapping property of the Laplace operator $\Delta_{g_{t}}$ on AC Riemannian manifolds.
\begin{proposition}\label{lapinv}
Given any $t>0$, $k\geq 2$, $p>1$, $\gamma\in (2-m, 0)$,
the Laplace operator $\Delta_{g_{t}}: \widehat{W}^{k, p}_{\gamma, t}(S^{m-1}\times\mathbb{R})\to \widehat{W}^{k-2, p}_{\gamma-2, t}(S^{m-1}\times\mathbb{R})$ is an isomorphism. Furthermore, there is a constant $C>0$ independent of $t$ such that
\begin{align}\label{invLap}
\|u\|_{\widehat{W}^{k, p}_{\gamma, t}}\leq C\|\Delta_{g_{t}}u\|_{\widehat{W}^{k-2, p}_{\gamma-2, t}}.
\end{align}
\end{proposition}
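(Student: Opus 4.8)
The plan is to deduce this from the scale-invariance of the Laplacian on an exact Riemannian cone. First I would establish the statement for $t=1$, i.e.\ that $\Delta_{g}\colon \widehat{W}^{k,p}_{\gamma}(S^{m-1}\times\mathbb{R})\to \widehat{W}^{k-2,p}_{\gamma-2}(S^{m-1}\times\mathbb{R})$ is an isomorphism for $\gamma\in(2-m,0)$. This is the standard Lockhart--McOwen theory for AC manifolds, cf.\ \cite{LM}: the Laplace operator on an AC $m$-manifold is Fredholm on weighted Sobolev spaces precisely when the weight $\gamma$ is not an indicial root of $\Delta$ at any of the cones $C_i$, and the indicial roots for a cone with link $\Sigma$ are governed by the eigenvalues of $\Delta_{\Sigma}$. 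Here each end has link $S^{m-1}$ (the round sphere), so the indicial roots are $\gamma\in\{2-m-k,\ k\ :\ k\ge 0\}$, none of which lie in the open interval $(2-m,0)$. Hence $\Delta_g$ is Fredholm there, and injectivity/surjectivity follow from the usual duality argument on AC ends: an element of the kernel with $\gamma>2-m$ decays, is harmonic on a complete AC manifold with two Euclidean-type ends, hence (by the maximum principle / energy estimate, using that such $u$ lies in $L^2$ or extends to a bounded harmonic function vanishing at infinity) must vanish; surjectivity follows since the cokernel is identified with the kernel of the adjoint at the conjugate weight $-\gamma+2-m$, which again lies in $(2-m,0)$ and so is trivial by the same argument. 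This gives the isomorphism and, by the open mapping theorem, an estimate $\|u\|_{\widehat{W}^{k,p}_{\gamma}}\le C\|\Delta_g u\|_{\widehat{W}^{k-2,p}_{\gamma-2}}$ with some $C=C(k,p,\gamma,m)$.

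Next I would transfer this to general $t>0$ by the dilation $x\mapsto tx$, under which $N_t = tN$ pulls back to $N$, $g_t = t^2 g$, and $\hat r_t = t\hat r$. The point is that $\Delta_{g_t} = t^{-2}\Delta_g$ and, by a direct change of variables in the defining integrals, the weighted norms scale homogeneously: one checks that $\|u\|_{\widehat{W}^{k,p}_{\gamma,t}} = t^{\gamma - m/p}\,\|u\circ(\text{dilation})\|_{\widehat{W}^{k,p}_{\gamma}}$ (the reader should verify the exact exponent by inserting $\hat r_t = t\hat r$, $|\nabla^j u|_{g_t} = t^{-j}|\nabla^j u|_g$ after accounting for the metric, and $dV_{g_t} = t^m dV_g$). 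Crucially, the operator $\Delta_{g_t}$ maps $\widehat{W}^{k,p}_{\gamma,t}\to\widehat{W}^{k-2,p}_{\gamma-2,t}$ because the drop by $2$ in the weight index exactly compensates the factor $t^{-2}$ coming from $\Delta_{g_t}=t^{-2}\Delta_g$; the prefactors $t^{\gamma-m/p}$ and $t^{(\gamma-2)-m/p}$ on the two sides are consistent precisely with this shift. Therefore the isomorphism for $t=1$ transports to an isomorphism for every $t>0$, and the constant $C$ in \eqref{invLap} is the \emph{same} for all $t$, since it is scale-invariant: it is literally the $t=1$ constant.

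The only genuinely substantive step is the Fredholm/isomorphism statement at $t=1$, which is classical; the transfer to general $t$ is a bookkeeping exercise once the scaling of the norms is pinned down, and the $t$-independence of $C$ is automatic from that scaling. The one point requiring a little care is making sure the radius function $\hat r_t$ used to define $\widehat{W}^{k,p}_{\gamma,t}$ really is the dilate of $\hat r$ (up to bounded factors on the compact core, which only affects $C$ by a fixed multiplicative constant), so that the homogeneity of the norm is exact and not merely up to a $t$-dependent equivalence constant. I expect the main obstacle — if any — to be purely notational: tracking the exponent $\gamma - m/p$ through the definition and confirming that the weight shift $\gamma\mapsto\gamma-2$ matches the $t^{-2}$ in $\Delta_{g_t}$ so that the estimate is genuinely uniform in $t$.
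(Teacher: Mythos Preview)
Your approach is exactly the paper's: invoke the standard Lockhart--McOwen isomorphism at $t=1$ (which the paper simply cites as ``well-known'', while you helpfully sketch the indicial-root and duality argument), and then transfer to general $t$ by scaling. The only correction needed is the scaling exponent. There is no dilation of the domain to compose with: the abstract manifold $S^{m-1}\times\mathbb{R}$ is fixed, and only the metric $g_{t}=t^{2}g$ and radius $\hat{r}_{t}=t\hat{r}$ change. A direct substitution (using $|\nabla^{j}u|_{g_{t}}=t^{-j}|\nabla^{j}u|_{g}$, and noting that $\hat{r}_{t}^{-m}=t^{-m}\hat{r}^{-m}$ cancels exactly against $dV_{g_{t}}=t^{m}dV_{g}$) gives
\[
\|u\|_{\widehat{W}^{k,p}_{\gamma,t}}=t^{-\gamma}\|u\|_{\widehat{W}^{k,p}_{\gamma,1}},
\]
with no $m/p$ term and the opposite sign on $\gamma$ from what you wrote. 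With this in hand the uniform estimate is the one-line chain
\[
\|u\|_{\widehat{W}^{k,p}_{\gamma,t}}=t^{-\gamma}\|u\|_{\widehat{W}^{k,p}_{\gamma,1}}\le C_{1}t^{-\gamma}\|\Delta_{g}u\|_{\widehat{W}^{k-2,p}_{\gamma-2,1}}=C_{1}t^{-\gamma}\cdot t^{\gamma-2}\cdot t^{2}\|\Delta_{g_{t}}u\|_{\widehat{W}^{k-2,p}_{\gamma-2,t}}=C_{1}\|\Delta_{g_{t}}u\|_{\widehat{W}^{k-2,p}_{\gamma-2,t}},
\]
which is precisely the paper's computation.
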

\begin{proof}
It is well-known that the Proposition holds for each $t$, hence it remains to show that the constant $C$ is independent of $t$. Observe that the $\widehat{W}^{k, p}_{\gamma, t}$-norm and the Laplacian scale like 
\begin{align*}
    \|u\|_{\widehat{W}^{k, p}_{\gamma, t}} = t^{-\gamma}\|u\|_{\widehat{W}^{k, p}_{\gamma, 1}},\quad\mbox{and} \quad\Delta_{g_{t}}u = t^{-2}\Delta_{g}u.
\end{align*}
Hence, if $C_{1}>0$ is the constant for $t = 1$,
\begin{align*}
    \|u\|_{\widehat{W}^{k, p}_{\gamma, t}} = t^{-\gamma}\|u\|_{\widehat{W}^{k, p}_{\gamma, 1}}\leq t^{-\gamma}C_{1}\|\Delta_{g}u\|_{\widehat{W}^{k-2, p}_{\gamma-2, 1}} = t^{-\gamma}C_{1}t^{\gamma-2}t^{2}\|\Delta_{g_{t}}u\|_{\widehat{W}^{k-2, p}_{\gamma-2, t}} = C_{1}\|\Delta_{g_{t}}u\|_{\widehat{W}^{k-2, p}_{\gamma-2, t}}.
\end{align*}
\end{proof}

Set $\mathcal{P}_{t}:=e^{\frac{f_{t}}{2}}\mathcal{L}_{g_{t}} = \cos\theta_{f_{t}}\:\Delta_{f_{t}} - \sin\theta_{f_{t}}\langle\nabla\theta_{f_{t}}, \nabla\cdot\rangle_{g_{t}}$. Unlike the Laplacian $\Delta_{g_{t}}$, since $\nabla f_{t}$ does not decay as $r\to\infty$, $\mathcal{P}_{t}$ does not map $\widehat{W}^{k, p}_{\gamma, t}$ to $\widehat{W}^{k-2, p}_{\gamma-2, t}$. However, we can still get uniform estimate for functions in $\widehat{W}^{k, p}_{\gamma, t}$ with compact support, provided $t$ is small enough. Let $b\in (0, 1)$. For $t$ small enough such that $t\hat{R}<t^{b}$, consider the domain 
\begin{align*}
    \Omega_{t}:=\{\:x\in S^{m-1}\times\mathbb{R}\:|\:\hat{r}_{t}(x)<t^{b}\}
\end{align*}
We now prove the uniform estimate of $\mathcal{P}_{t}$ for small $t$ in $\Omega_{t}$.
\begin{proposition}\label{estneck}
There exists $\delta>0$ such that for any $t\in (0, \delta)$, $k\geq 2$, $p>1$, $\gamma\in (2-m, 0)$, there exists $C>0$ independent of $t$ so that for any $u\in \widehat{W}^{k, p}_{\gamma, t}(S^{m-1}\times\mathbb{R})$ supported in $\Omega_{t}$,
\begin{align}\label{NeckSchauder}
    \|u\|_{\widehat{W}^{k, p}_{\gamma, t}}\leq C\|\mathcal{P}_{t}u\|_{\widehat{W}^{k-2, p}_{\gamma-2, t}}.
\end{align}
As a result, for any $t\in (0, \delta)$, given $v\in \widehat{W}^{k-2, p}_{\gamma-2, t}(\Omega_{t})$, there exists a unique $u\in\widehat{W}^{k, p}_{\gamma, t, \mathcal{D}}(\Omega_{t})$ such that $\mathcal{P}_{t}u = v$ and $\|u\|_{\widehat{W}^{k, p}_{\gamma, t}}\leq C\|v\|_{\widehat{W}^{k-2, p}_{\gamma-2, t}}$, where $\widehat{W}^{k, p}_{\gamma, t, \mathcal{D}}(\Omega_{t})$ denotes the functions in $\widehat{W}^{k, p}_{\gamma, t}(\Omega_{t})$ with zero Dirichlet boundary condition on $\partial\Omega_{t}$.
\end{proposition}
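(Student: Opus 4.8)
The plan is to treat $\mathcal{P}_{t}$ as a perturbation of the Laplacian $\Delta_{g_{t}}$, whose uniform invertibility on $\widehat{W}^{k,p}_{\gamma,t}$ is Proposition~\ref{lapinv}, and to show that the error $\mathcal{E}_{t}:=\mathcal{P}_{t}-\Delta_{g_{t}}$ has operator norm tending to $0$ as $t\to 0$ once we restrict to functions supported in $\Omega_{t}$. Expanding $\Delta_{f_{t}}=\Delta_{g_{t}}-\tfrac12\langle\nabla f_{t},\nabla\cdot\rangle_{g_{t}}$ gives
\begin{align*}
\mathcal{E}_{t}u \;=\; (\cos\theta_{f_{t}}-1)\,\Delta_{g_{t}}u \;-\; \tfrac12\cos\theta_{f_{t}}\,\langle\nabla f_{t},\nabla u\rangle_{g_{t}} \;-\; \sin\theta_{f_{t}}\,\langle\nabla\theta_{f_{t}},\nabla u\rangle_{g_{t}}.
\end{align*}
Since $f_{t}=\widetilde{F}_{t}^{*}f=2\langle tN,T\rangle=t\,(f\!\circ\!N)$ and $\theta_{f_{t}}=-\langle tN,JT\rangle=-t\,\langle N,JT\rangle$ (with $\overline{\theta}=0$), the one-forms $df_{t}$ and $d\theta_{f_{t}}$ are the differentials of restrictions to $N_{t}$ of linear functions on $\mathbb{C}^{m}$; hence $|df_{t}|_{g_{t}}\le 2$, $|d\theta_{f_{t}}|_{g_{t}}\le 1$, and -- using $|\mathrm{Hess}_{g_{t}}f_{t}|_{g_{t}}\lesssim|\mathrm{II}_{N_{t}}|_{g_{t}}$, the scaling $|\mathrm{II}_{N_{t}}|_{g_{t}}=t^{-1}|\mathrm{II}_{N}|_{g}$, and the asymptotically conical decay $(\ref{lawlordiff})$, which give $|\nabla^{j}df_{t}|_{g_{t}}=O(\hat{r}_{t}^{\,-j})$ and likewise for $d\theta_{f_{t}}$ -- the quantities $df_{t}$ and $d\theta_{f_{t}}$ are bounded \emph{uniformly in $t$} in the weight-$0$ $C^{k-2}$-norm $\sum_{j\le k-2}\sup_{N_{t}}\hat{r}_{t}^{\,j}|\nabla^{j}\cdot|_{g_{t}}$ on $N_{t}$.

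The essential gain comes from restricting to $\Omega_{t}=\{\hat{r}_{t}<t^{b}\}$. There the unscaled radius obeys $\hat{r}<t^{b-1}$, so $|N|\lesssim\hat{r}\lesssim t^{b-1}$ and hence $|\theta_{f_{t}}|=t|\langle N,JT\rangle|\lesssim t^{b}$; combined with the weighted $C^{k-2}$ bounds this yields $\|\sin\theta_{f_{t}}\|_{C^{k-2}_{0,t}(\Omega_{t})}\lesssim t^{b}$ and $\|\cos\theta_{f_{t}}-1\|_{C^{k-2}_{0,t}(\Omega_{t})}\lesssim t^{2b}$, while $\cos\theta_{f_{t}}$ itself is bounded there. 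For the first-order terms, contracting the weight-$0$ forms $df_{t},d\theta_{f_{t}}$ against $du$ produces an object of ``weight $\gamma-1$'', whereas the target norm $\widehat{W}^{k-2,p}_{\gamma-2,t}$ measures with weight $\gamma-2$; since on $\Omega_{t}$ one has $\hat{r}_{t}^{\,2-\gamma+j}=\hat{r}_{t}\cdot\hat{r}_{t}^{\,1-\gamma+j}\le t^{b}\,\hat{r}_{t}^{\,1-\gamma+j}$, this mismatch costs only a favourable factor $t^{b}$. Collecting the pieces -- and using the boundedness $\|\Delta_{g_{t}}u\|_{\widehat{W}^{k-2,p}_{\gamma-2,t}}\lesssim\|u\|_{\widehat{W}^{k,p}_{\gamma,t}}$ from Proposition~\ref{lapinv} for the first term of $\mathcal{E}_{t}u$ -- one obtains $\|\mathcal{E}_{t}u\|_{\widehat{W}^{k-2,p}_{\gamma-2,t}}\le\varepsilon(t)\|u\|_{\widehat{W}^{k,p}_{\gamma,t}}$ with $\varepsilon(t)=O(t^{b})\to 0$, for every $u\in\widehat{W}^{k,p}_{\gamma,t}(S^{m-1}\times\mathbb{R})$ supported in $\Omega_{t}$.

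Granting this, $(\ref{NeckSchauder})$ follows by absorption: choosing $\delta$ so small that $C\varepsilon(t)<\tfrac12$ for $t\in(0,\delta)$, with $C$ the constant of Proposition~\ref{lapinv}, any $u$ supported in $\Omega_{t}$ satisfies $\|u\|_{\widehat{W}^{k,p}_{\gamma,t}}\le C\|\Delta_{g_{t}}u\|_{\widehat{W}^{k-2,p}_{\gamma-2,t}}\le C\|\mathcal{P}_{t}u\|_{\widehat{W}^{k-2,p}_{\gamma-2,t}}+\tfrac12\|u\|_{\widehat{W}^{k,p}_{\gamma,t}}$, whence the estimate with constant $2C$. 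For the solvability assertion I would first note that the Dirichlet Laplacian $\Delta_{g_{t}}\colon\widehat{W}^{k,p}_{\gamma,t,\mathcal{D}}(\Omega_{t})\to\widehat{W}^{k-2,p}_{\gamma-2,t}(\Omega_{t})$ is an isomorphism with $t$-uniform inverse bound -- standard for Dirichlet Laplacians on large truncations of an asymptotically conical manifold, since after rescaling $\Omega_{t}$ is the truncation $\{\hat{r}<t^{b-1}\}$ of $N$ -- and then transfer invertibility to $\mathcal{P}_{t}$ by the same Neumann-series/absorption argument; injectivity of the Dirichlet problem for $t$ small is automatic because $\mathcal{P}_{t}$ carries no zeroth-order term and $\cos\theta_{f_{t}}>0$ (strong maximum principle), so index-zero Fredholmness on the compact domain $\Omega_{t}$ forces surjectivity. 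The uniform bound on $\mathcal{P}_{t}^{-1}$ is precisely $(\ref{NeckSchauder})$.

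I expect the main obstacle to be the perturbation estimate of the second and third paragraphs, i.e.\ controlling $\theta_{f_{t}}$, $df_{t}$, $d\theta_{f_{t}}$ and the functions $\cos\theta_{f_{t}},\sin\theta_{f_{t}}$ built from them in the \emph{weighted} $C^{k-2}$-norm on $\Omega_{t}$, uniformly in $t$; this relies on the sharp asymptotically conical decay of the Lawlor neck $(\ref{lawlordiff})$ and on a careful accounting of how the weight $\hat{r}_{t}$, and the inequality $\hat{r}_{t}<t^{b}$ valid on $\Omega_{t}$, enter the various norms. Once this bookkeeping is set up, the remaining steps are formal.
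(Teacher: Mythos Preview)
Your argument is correct and follows essentially the same perturbation-and-absorption strategy as the paper. Two small tactical differences are worth noting. First, where you split $\mathcal{P}_{t}=\Delta_{g_{t}}+\mathcal{E}_{t}$ and therefore carry the second-order error $(\cos\theta_{f_{t}}-1)\Delta_{g_{t}}u$, the paper instead writes $\cos\theta_{f_{t}}\,\Delta_{g_{t}}u=\mathcal{P}_{t}u+\mathcal{J}_{t}u$ with $\mathcal{J}_{t}$ purely first order; since $|\theta_{f_{t}}|<\pi/3$ on $\Omega_{t}$ gives $\cos\theta_{f_{t}}\ge\tfrac12$, one has $\|\Delta_{g_{t}}u\|\le 2\|\cos\theta_{f_{t}}\Delta_{g_{t}}u\|$ and the second-order piece never appears---a slightly slicker bookkeeping, though your route works just as well. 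Second, for the solvability on $\Omega_{t}$ the paper runs the method of continuity along the family $\mathcal{P}^{s}_{t}=\cos(\theta_{sf_{t}})\Delta_{sf_{t}}-\sin(\theta_{sf_{t}})\langle\nabla\theta_{sf_{t}},\nabla\cdot\rangle$, $s\in[0,1]$, connecting $\Delta_{g_{t}}$ to $\mathcal{P}_{t}$; this is formally equivalent to your Neumann-series/maximum-principle argument, and both rely implicitly on the solvability of the Dirichlet Laplacian on the truncated neck.
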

\begin{proof}
Since $N_{t}$ is AC, it is not hard to see that $|N_{t}| = O(\hat{r}_{t})$. Hence we may 
choose $\delta>0$ small enough so that $\sup_{\Omega_{t}}|\theta_{f_{t}}| < \frac{\pi}{3}$ for $t\in (0, \delta)$. Then by Proposition $\ref{lapinv}$, there exists $C>0$ such that
\begin{align*}
    \|u\|_{\widehat{W}^{k, p}_{\gamma, t}}\leq C\|\Delta_{g_{t}}u\|_{\widehat{W}^{k-2, p}_{\gamma-2, t}}\leq 2C\|\cos\theta_{f_{t}}\Delta_{g_{t}}u\|_{\widehat{W}^{k-2, p}_{\gamma-2, t}} = 2C\|\mathcal{P}_{t}u + \mathcal{J}_{t}u\|_{\widehat{W}^{k-2, p}_{\gamma-2, t}},
\end{align*}
where $\mathcal{J}_{t}$ is a first order differential operator defined by 
\begin{align*}
    \mathcal{J}_{t}u := \frac{1}{2}\cos\theta_{f_{t}}\langle\nabla f_{t}, \nabla u\rangle + \sin\theta_{f_{t}}\langle\nabla\theta_{f_{t}}, \nabla u\rangle.
\end{align*}
Let $V_{t}:=\frac{1}{2}\cos\theta_{f_{t}}\nabla f_{t} + \sin\theta_{f_{t}}\nabla\theta_{f_{t}}$, then $|\nabla^{k}V_{t}|_{g_{t}}$ are bounded uniformly in $t$, for all $k\geq 0$. Since $\supp u\subset \Omega_{t}$,
\begin{align}
    \|\mathcal{J}_{t}u\|^{p}_{\widehat{W}^{k-2, p}_{\gamma-2, t}} = \|\langle V_{t}, \nabla u\rangle_{g_{t}}\|^{p}_{\widehat{W}^{k-2, p}_{\gamma-2, t}} &= \sum_{j=0}^{k-2}\int_{\Omega_{t}}|\:\hat{r}_{t}^{-\gamma + 2 + j}\nabla^{j}\langle V_{t}, \nabla u\rangle_{g_{t}}\:|^{p}_{g_{t}}\:\hat{r}_{t}^{-m}dV_{g_{t}}\nonumber\\
    &\leq\sum_{j=0}^{k-2}\sum_{l=0}^{j}\int_{\Omega_{t}}|\:\hat{r}^{-\gamma + 2 + j}_{t}\nabla^{l+1}u\:|^{p}_{g_{t}}\:|\nabla^{j-l}V_{t}|^{p}_{g_{t}}\:\hat{r}^{-m}_{t}dV_{g_{t}}\nonumber\\
    &\leq C_{1}\sum_{j=0}^{k}\int_{\Omega_{t}}|\:\hat{r}^{-\gamma + 2 + j}_{t}\nabla^{j}u\:|^{p}_{g_{t}}\:\hat{r}^{-m}_{t}dV_{g_{t}}\nonumber\\
    &\leq C_{1}\cdot t^{2bp}\sum_{j=0}^{k}\int_{\Omega_{t}}|\:\hat{r}^{-\gamma + j}_{t}\nabla^{j}u\:|^{p}_{g_{t}}\:\hat{r}^{-m}_{t}dV_{g_{t}}  = C_{1}\cdot t^{2bp}\:\|u\|^{p}_{\widehat{W}^{k, p}_{\gamma, t}}\label{firstorder}
\end{align}
for some $C_{1}>0$ independent of $t$, where we have used $\hat{r}_{t}^{2}(x) < t^{2b}$ for $x\in \Omega_{t}$ in the last inequality. Choose $\delta>0$ even smaller so that $2C\cdot C_{1}^{1/p}t^{2b}<\frac{1}{2}$ for $t\in (0, \delta)$, we have
\begin{align*}
    \|u\|_{\widehat{W}^{k, p}_{\gamma, t}}\leq 2C\|\mathcal{P}_{t}\|_{\widehat{W}^{k-2, p}_{\gamma-2, t}} + \frac{1}{2}\|u\|_{\widehat{W}^{k, p}_{\gamma, t}}
\end{align*}
for $t\in (0, \delta)$. Hence $(\ref{NeckSchauder})$ is proved. The last assertion now follows from applying method of continuity to the family of operators
\begin{align*}
    \mathcal{P}^{s}_{t}:=\cos(\theta_{sf_{t}})\Delta_{sf_{t}} - \sin(\theta_{sf_{t}})\langle\nabla\theta_{sf_{t}}, \nabla\cdot\rangle,\quad s\in[0, 1],
\end{align*}
for each $t\in(0, \delta)$.
\end{proof}

\subsection{Uniform estimate for the linearized operator on the approximate solutions}
Now we are ready to prove the uniform invertibility of $\mathcal{L}_{g_{t}}$ on the weighted Sobolev space $W^{k, p}_{\beta, \gamma, t}(X_{t})$ which has been defined in \S4.2. Observe that for functions supported on the wings the $W^{k, p}_{\beta, \gamma, t}$-norm is equivalent to the $\widecheck{W}^{k, p}_{\beta, \gamma}$-norm, while for functions supported on the neck region, the $W^{k, p}_{\beta, \gamma, t}$-norm is equivalent to $\widehat{W}^{k, p}_{\gamma, t}$-norm. In the following we define a cut-off function as in \cite[Theorem 12.2]{PaciniUnifEst} to decompose functions on $X_{t}$ into functions supported on the wings and the neck regions, and with uniform estimate for all small $t$.

For any $\tau>0$, choose $a, b\in\mathbb{R}$ such that $0<b<a<\tau$. Let $\eta: \mathbb{R}\to[0,1]$ be a smooth decreasing function such that $\eta(s)=1$ for $s\leq b$ and $\eta(s)=0$ for $s\geq a$. For $t\in(0, \delta)$, define
\begin{align*}
\eta_{t}: (0, \infty)\to[0,1],\quad\eta_{t}(r):=\eta\left(\frac{\log r}{\log t}\right).
\end{align*}
Then $\eta_{t}(r) = 1$ for $r\geq t^{b}$, $\eta_{t}(r) = 0$ for $r\leq t^{a}$, and for each $k\in\mathbb{N}$, there exists $C_{k}>0$ such that
\begin{align}\label{eta}
\left|\:r^{-k}\frac{\partial^{k}\eta_{t}}{\partial r^{k}}(r)\right|\leq\frac{C_{k}}{|\log t|}.
\end{align}
We may extend $\eta_{t}$ to a globally defined function on $X_{t}$ in an obvious manner. Notice that
by choosing $\delta$ even smaller if necessary (specifically, $\delta<(\frac{1}{2})^{\tau - a}$), for $t\in (0, \delta)$ we have the following ordering:
\begin{align*}
    0<t\hat{R}<t^{\tau}<2t^{\tau}<t^{a}<t^{b}<\epsilon.
\end{align*}

By using $\eta_{t}$ as cut-off functions to decompose $u$ into corresponding parts supported in the wings and the necks, we may use the results proved in Section $\ref{invwings}$ and $\ref{invneck}$ and patch them together. The estimate $(\ref{eta})$ ensures that the error produced by this process can be made arbitrarily small. Again, as in \S4.1, we will only present the proof for the case that $X_{t}$ has only one neck region. The proof for general cases is essentially the same, since we have proven the invertibility of $\Delta_{f}$ on $\mathbb{R}^{m}\setminus\{p_{1}, \cdots, p_{e}\}$.

We first consider the operator $\mathcal{P}_{t}:=e^{\frac{f_{t}}{2}}\mathcal{L}_{g_{t}}$.
\begin{proposition}\label{InvL}
Let $k\geq 2$, $p>1$, $\beta\in (-1, 0)$ and $\gamma\in (2-m, 0)$. For $\delta>0$ small enough, there exists $C>0$ independent of $t\in (0, \delta)$ such that for any $u\in W^{k, p}_{\beta, \gamma, t}(X_{t})$,
\begin{align*}
\|u\|_{W^{k, p}_{\beta, \gamma, t}}\leq C\:\|\mathcal{P}_{t} u\|_{W^{k-2,p}_{\beta, \gamma-2, t}}.
\end{align*}
\end{proposition}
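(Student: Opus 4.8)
The plan is to combine the component-wise uniform invertibility results established in \S\ref{invwings} (Proposition~\ref{isowings} for $\Delta_f$ on the wings $\mathbb{R}^m\setminus\{p_1,\dots,p_e\}$) and \S\ref{invneck} (Proposition~\ref{estneck} for $\mathcal{P}_t$ on the scaled Lawlor neck $N_t$ restricted to $\Omega_t$) via the cut-off function $\eta_t$ introduced above, following the patching argument of Pacini \cite[Theorem 12.2]{PaciniUnifEst}. First I would observe that since $X_t$ is $T$-finite, the weight $e^{\beta f_t/2}$ is bounded above and below on the wings, so the $W^{k,p}_{\beta,\gamma,t}$-norm restricted to $(\mathbb{R}^m_0\cup\mathbb{R}^m_\phi)\setminus E_p$ is uniformly equivalent to the $\widecheck{W}^{k,p}_{\beta,\gamma}$-norm of Proposition~\ref{isowings}; likewise, on the neck region and the part of the transition region inside $\Omega_t$, where $f_t = t\cdot f$ is bounded, the $W^{k,p}_{\beta,\gamma,t}$-norm is uniformly equivalent to the $\widehat{W}^{k,p}_{\gamma,t}$-norm of Proposition~\ref{estneck}. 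I must also use Lemma~\ref{metric} to know that on the overlapping annulus $\Sigma\times[t^a,t^b]$ (which lies in the region $[t^\tau,2t^\tau]\subset$ transition region only if $a,b$ are chosen past $2t^\tau$, which the ordering $0<t\hat R<t^\tau<2t^\tau<t^a<t^b<\epsilon$ guarantees) the metric $g_t$ equals the cone metric $\phi^*g$, so the differential operator $\mathcal{P}_t$ there agrees, up to the harmless factor $\cos\theta_{f_t}\approx 1$ and a small first-order perturbation, with $\Delta_f$ on the Euclidean wing.

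The key steps, in order, are: (i) decompose $u = \eta_t u + (1-\eta_t) u =: u_1 + u_2$, where $u_1$ is supported on the wings (where $\rho_t \geq t^a$, i.e. in $\{\eta_t = 1\}$ one has $\rho_t\geq t^b$, and $\operatorname{supp}\eta_t\subset\{\rho_t\geq t^a\}$) and $u_2$ is supported in $\Omega_t$; (ii) apply Proposition~\ref{isowings} to $u_1$ to get $\|u_1\|_{\widecheck{W}^{k,p}_{\beta,\gamma}}\leq C\|\Delta_f u_1\|_{\widecheck{W}^{k-2,p}_{\beta,\gamma-2}}$, and convert $\Delta_f$ to $\mathcal{P}_t$ on the support using that they differ by $(\cos\theta_{f_t}-1)\Delta_{f_t}$ plus the first-order term $-\sin\theta_{f_t}\langle\nabla\theta_{f_t},\nabla\cdot\rangle$, both of which are controlled since $|\theta_{f_t}|$ is small on this region for $\delta$ small (here one needs $\theta_{f_t}$ small on the whole transition region, which holds because $|\widetilde F_t|$ is comparable to $\rho_t\leq\epsilon$ there and $\epsilon$ can be taken small, or rather $|\theta_{f_t}|$ is already bounded by the fixed geometry of the Grim Reaper near $p$); (iii) apply Proposition~\ref{estneck} to $u_2$ to get $\|u_2\|_{\widehat{W}^{k,p}_{\gamma,t}}\leq C\|\mathcal{P}_t u_2\|_{\widehat{W}^{k-2,p}_{\gamma-2,t}}$; (iv) expand $\mathcal{P}_t u_i = \eta_t\mathcal{P}_t u$ (resp. $(1-\eta_t)\mathcal{P}_t u$) plus commutator terms $[\mathcal{P}_t,\eta_t]u$, which are first- and zeroth-order in $u$ with coefficients involving $r^{-j}\partial_r^j\eta_t$; (v) invoke the bound $(\ref{eta})$, namely $|r^{-k}\partial_r^k\eta_t|\leq C_k/|\log t|$, to absorb the commutator contributions: they are bounded by $(C/|\log t|)\|u\|_{W^{k,p}_{\beta,\gamma,t}}$, which for $\delta$ small is at most $\tfrac12\|u\|_{W^{k,p}_{\beta,\gamma,t}}$; (vi) combine to obtain $\|u\|_{W^{k,p}_{\beta,\gamma,t}}\leq C\|\mathcal{P}_t u\|_{W^{k-2,p}_{\beta,\gamma-2,t}}$ after rearranging.

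The main obstacle I expect is step (iv)--(v): controlling the commutator $[\mathcal{P}_t,\eta_t]u$ in the correct weighted norm uniformly in $t$. The subtlety is that $\mathcal{P}_t$ is second-order, so $[\mathcal{P}_t,\eta_t]$ contains a term $2\langle\nabla\eta_t,\nabla u\rangle$ and a term $(\Delta_{f_t}\eta_t)u$; the first is first-order in $u$ and the second zeroth-order, and both are supported in the annulus $\Sigma\times[t^a,t^b]$ where $\eta_t$ transitions. In the weighted norm $\widehat{W}^{k-2,p}_{\gamma-2,t}$ one gains a factor $\hat r_t^{-\gamma+2+j}$ against the $j$-th derivative, while $\nabla^{j+1}\eta_t$ contributes $\hat r_t^{-(j+1)}$ times $|\log t|^{-1}$; since $\gamma-2<0$ one does not automatically gain a power of $t$ from the radius, so it is essential that the derivatives of $\eta_t$ come with the factor $|\log t|^{-1}\to 0$, which is exactly the content of $(\ref{eta})$, and that no positive power of $t$ is needed. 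One also has to be careful that the full linearized operator $\mathcal{P}_t$ differs from the model operators ($\Delta_f$ on the wings, $\mathcal{P}_t$ as treated on $N_t$) only by terms that are small in the overlap region; this is where Lemma~\ref{metric} (giving $g_t = \phi^*g$ on $\Sigma\times[2t^\tau,\epsilon]\supset\Sigma\times[t^a,t^b]$) and the smallness of $|\theta_{f_t}|$ near $p$ are used. The remaining details — equivalence of norms, boundedness of $\mathcal{P}_t$ between the spaces, and the final algebraic rearrangement — are routine.
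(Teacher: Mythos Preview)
Your proposal is correct and follows essentially the same route as the paper: the same cut-off decomposition $u=\eta_t u+(1-\eta_t)u$, the same invocation of Propositions~\ref{isowings} and~\ref{estneck} on the respective pieces, and the same absorption of the commutator $[\mathcal{P}_t,\eta_t]u$ via the bound $(\ref{eta})$ yielding the $C/|\log t|$ error.

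One small simplification you missed in step~(ii): on the support of $\eta_t u$, namely $[(\mathbb{R}^m_0\cup\mathbb{R}^m_\phi)\setminus E_p]\cup(\Sigma\times[t^a,\epsilon])$, the approximate solution $\widetilde F_t$ coincides exactly with the Grim Reaper cylinder $\widetilde F$ (since $t^a>2t^\tau$ and the interpolation function $w_t$ equals $u$ on $\Sigma\times[2t^\tau,\epsilon]$), so $\theta_{f_t}\equiv 0$ there and hence $\mathcal{P}_t=\Delta_f$ \emph{exactly}, not merely up to a small perturbation. The conversion you describe is therefore trivial, and no smallness-of-$|\theta_{f_t}|$ argument is needed on the wing side; the paper uses this directly.
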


\begin{proof}
For any $u\in W^{k, p}_{\beta, \gamma, t}(X_{t})$, write $u = \eta_{t}u + (1-\eta_{t})u$. Then $\eta_{t}u$ is supported on $[(\mathbb{R}^{m}_{0}\cup\mathbb{R}^{m}_{\phi})\setminus E_{p}]\cup (\Sigma\times[t^{a}, \epsilon])$, and $(1-\eta_{t})u$ is supported on $(\Sigma\times[t\hat{R}, t^{b}])\cup [(S^{m-1}\times\mathbb{R})\setminus \hat{E}_{\infty}]$, where $\Sigma = S^{m-1}\cup S^{m-1}$.

Since each connected component of $[(\mathbb{R}^{m}_{0}\cup\mathbb{R}^{m}_{\phi})\setminus E_{p}]\cup (\Sigma\times[t^{a}, \epsilon])$ is identified with $\mathbb{R}^{m}\setminus B_{t^{a}}(0)$ with Euclidean metric and $\mathcal{P}_{t} = \Delta_{f}$, by Proposition $\ref{isowings}$  we have
\begin{align*}
\|\eta_{t}u\|_{W^{k, p}_{\beta, \gamma, t}} = \|\eta_{t} u\|_{\widecheck{W}^{k, p}_{\beta, \gamma}}\leq C\| \Delta_{f}(\eta_{t}u)\|_{\widecheck{W}^{k-2, p}_{\beta, \gamma-2}}
\end{align*}
for some $C>0$. Then 
\begin{align*}
    \| \Delta_{f}(\eta_{t}u)\|_{\widecheck{W}^{k-2, p}_{\beta, \gamma-2}} = \left[\;\sum_{j=0}^{k-2}\int_{\mathbb{R}^{m}}|e^{\frac{\beta f}{2}}\check{r}^{-\gamma+2+j}\nabla^{j}(\Delta_{f}(\eta_{t}u))|^{p}_{g_{0}}\:\check{r}^{-m}dV_{g_{0}}\;\right]^{\frac{1}{p}}.
\end{align*}
By direct computation we have
\begin{align}\label{expand}
    \Delta_{f}(\eta_{t}u) = \Delta_{f}\eta_{t}\cdot u + \eta_{t}\cdot\Delta_{f}u + 2\langle\nabla\eta_{t}, \nabla u\rangle_{g_{0}}.
\end{align}
The first term of ($\ref{expand}$) is estimated by:
\begin{align*}
    &\sum_{j=0}^{k-2}\int_{\mathbb{R}^{m}}|e^{\frac{\beta f}{2}}\check{r}^{-\gamma+2+j}\nabla^{j}(\Delta_{f}\eta_{t}\cdot u)|^{p}_{g_{0}}\:\check{r}^{-m}dV_{g_{0}}\\
    &\leq\sum_{j=0}^{k-2}\sum_{l=0}^{j}\int_{\mathbb{R}^{m}}|e^{\frac{\beta f}{2}}\check{r}^{-\gamma+2+j}\nabla^{j-l}\Delta_{f}\eta_{t}\cdot \nabla^{l}u|^{p}_{g_{0}}\:\check{r}^{-m}dV_{g_{0}}\\
    &\leq C\sum_{j=0}^{k-2}\sum_{l=0}^{j}\int_{\mathbb{R}^{m}}|e^{\frac{\beta f}{2}}\check{r}^{-\gamma+2+j}\nabla^{j-l+2}\eta_{t}\cdot \nabla^{l}u|^{p}_{g_{0}}\:\check{r}^{-m}dV_{g_{0}} \\
    &\hspace{5cm}+ C\sum_{j=0}^{k-2}\sum_{l=0}^{j}\int_{\mathbb{R}^{m}}|e^{\frac{\beta f}{2}}\check{r}^{-\gamma+2+j}\nabla^{j-l+1}\eta_{t}\cdot \nabla^{l}u|^{p}_{g_{0}}\:\check{r}^{-m}dV_{g_{0}}\\
    &=C\sum_{j=0}^{k-2}\sum_{l=0}^{j}\int_{\mathbb{R}^{m}}|e^{\frac{\beta f}{2}}\check{r}^{-\gamma+l}(\check{r}^{j-l+2}\nabla^{j-l+2}\eta_{t})\cdot \nabla^{l}u|^{p}_{g_{0}}\:\check{r}^{-m}dV_{g_{0}} \\
    &\hspace{5cm}+ C\sum_{j=0}^{k-2}\sum_{l=0}^{j}\int_{\mathbb{R}^{m}}|e^{\frac{\beta f}{2}}\check{r}^{-\gamma+1+l}(\check{r}^{j-l+1}\nabla^{j-l+1}\eta_{t})\cdot \nabla^{l}u|^{p}_{g_{0}}\:\check{r}^{-m}dV_{g_{0}}\\
    &\leq \frac{C}{|\log t|^{p}}\sum_{j=0}^{k-2}\sum_{l=0}^{j}\int_{\mathbb{R}^{m}}|e^{\frac{\beta f}{2}}\check{r}^{-\gamma+l} \nabla^{l}u|^{p}_{g_{0}}\:\check{r}^{-m}dV_{g_{0}}\leq \frac{C}{|\log t|^{p}}\|u\|^{p}_{W^{k, p}_{\beta, \gamma}}.
\end{align*}
Note that we have used ($\ref{eta}$) and $\check{r}\leq 1$ in the third inequality above. By similar computation we can obtain the estimates
\begin{align*}
    &\sum_{j=0}^{k-2}\int_{\mathbb{R}^{m}}|e^{\frac{\beta f}{2}}\check{r}^{-\gamma+2+j}\nabla^{j}(\eta_{t}\cdot \Delta_{f}u)|^{p}_{g_{0}}\:\check{r}^{-m}dV_{g_{0}}\leq C\left( 1 + \frac{1}{|\log t|}\right)^{p}\|\Delta_{f}u\|^{p}_{W^{k-2, p}_{\beta, \gamma-2}},\\
    &\sum_{j=0}^{k-2}\int_{\mathbb{R}^{m}}|e^{\frac{\beta f}{2}}\check{r}^{-\gamma+2+j}\nabla^{j}(\langle\nabla\eta_{t}, \nabla u\rangle)|^{p}_{g_{0}}\:\check{r}^{-m}dV_{g_{0}}\leq\frac{C}{|\log t|^{p}}\|u\|^{p}_{W^{k, p}_{\beta, \gamma}}.
\end{align*}
Therefore
\begin{align}\label{www}
    \|\eta_{t} u\|_{W^{k, p}_{\beta, \gamma, t}} \leq C\|\mathcal{P}_{t}u\|_{W^{k-2, p}_{\beta, \gamma-2, t}} + \frac{C}{|\log t|}\|u\|_{W^{k, p}_{\beta, \gamma, t}}.
\end{align}
Next, we estimate the norm of $(1-\eta_{t})u$, which is supported on $(\Sigma\times[t\hat{R}, t^{b}])\cup [(S^{m-1}\times\mathbb{R})\setminus \hat{E}_{\infty}]$. Under the identification of $\hat{\phi}$, we may think of $(1-\eta_{t})u$ as a function on $S^{m-1}\times\mathbb{R}$, with induced metric equivalent to the AC metric $g_{t}$ on the Lawlor neck by $(\ref{metric})$. By Proposition $\ref{estneck}$ we have
\begin{align*}
    \|(1-\eta_{t})u\|_{{W}^{k, p}_{\beta, \gamma, t}}\leq C\|(1-\eta_{t})u\|_{\widehat{W}^{k, p}_{\gamma, t}}\leq C\|\mathcal{P}_{t}[(1-\eta_{t})u]\|_{\widehat{W}^{k-2, p}_{\gamma-2, t}}.
\end{align*}
By similar computation as that for $\eta_{t}u$, we obtain
\begin{align}\label{nnn}
    \|(1-\eta_{t})u\|_{W^{k, p}_{\beta, \gamma, t}}\leq C\|\mathcal{P}_{t}u\|_{W^{k-2, p}_{\beta, \gamma-2, t}} + \frac{C}{|\log t|}\|u\|_{W^{k, p}_{\beta, \gamma, t}}.
\end{align}
Combining ($\ref{www}$) and ($\ref{nnn}$), we find 
\begin{align*}
    \|u\|_{W^{k, p}_{\beta, \gamma, t}}\leq \|\eta_{t}u\|_{W^{k, p}_{\beta, \gamma, t }} + \|(1-\eta_{t})u\|_{W^{k, p}_{\beta, \gamma, t}}\leq C\|\mathcal{P}_{t}u\|_{W^{k-2, p}_{\beta, \gamma-2, t}} + \frac{C}{|\log t|}\|u\|_{W^{k, p}_{\beta, \gamma, t}}
\end{align*}
for some $C>0$ independent of $t$. Setting $\delta>0$ small enough so that $\frac{C}{|\log t|}<\frac{1}{2}$, then the result follows.

\end{proof}

We finally obtain the uniform invertibility of $\mathcal{L}_{g_{t}}$ by composing the isomorphism $u\mapsto e^{-\frac{f_{t}}{2}}u$ with the isomorphism $\mathcal{P}_{t}$.
\begin{proposition}\label{LinearIsom}
For $k\geq 2$, $p>1$, $\beta\in (-1, 0)$ and $\gamma\in (2-m, 0)$, there exists $\delta>0$ so that the bounded operator $\mathcal{L}_{g_{t}}: W^{k, p}_{\beta, \gamma, t}(X_{t})\to W^{k-2, p}_{\beta+1, \gamma-2, t}(X_{t})$ is an isomorphism for all $t\in (0, \delta)$. Moreover, there exists $C>0$ independent of $t$ such that
\begin{align}\label{UnifInv}
    \|u\|_{W^{k, p}_{\beta, \gamma, t}}\leq C\:\|\mathcal{L}_{g_{t}} u\|_{W^{k-2,p}_{\beta+1, \gamma-2, t}}.
\end{align}
\end{proposition}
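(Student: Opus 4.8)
The plan is to deduce the statement directly from Proposition \ref{InvL} together with two elementary observations: that $\mathcal L_{g_t} = e^{-f_t/2}\mathcal P_t$ by definition, and that multiplication by $e^{\pm f_t/2}$ is an isometric bijection between the relevant weighted spaces (it shifts the $\beta$-weight by $\mp 1$ and leaves the $\gamma$-weight and all derivatives-via-Leibniz up to the usual lower order corrections alone, which is exactly why the target space carries the weight $\beta+1$). First I would record the mapping property: since $\|u\|_{W^{k,p}_{\beta,\gamma,t}} = \|e^{f_t/2}u\|_{W^{k,p}_{\beta+1,\gamma,t}}$ up to a uniform constant (the extra factors produced when differentiating $e^{f_t/2}$ involve $\nabla f_t$, which is bounded on $X_t$ uniformly in $t$ since $T$ is fixed and the wings carry the Euclidean metric), multiplication $v\mapsto e^{-f_t/2}v$ gives a topological isomorphism $W^{k-2,p}_{\beta+1,\gamma-2,t}(X_t)\xrightarrow{\ \sim\ } W^{k-2,p}_{\beta,\gamma-2,t}(X_t)$ with operator norms of it and its inverse bounded independently of $t\in(0,\delta)$.

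Next, from Proposition \ref{InvL} we already have, for $\delta$ small, a uniform a priori estimate $\|u\|_{W^{k,p}_{\beta,\gamma,t}}\le C\|\mathcal P_t u\|_{W^{k-2,p}_{\beta,\gamma-2,t}}$ for all $u\in W^{k,p}_{\beta,\gamma,t}(X_t)$, which in particular makes $\mathcal P_t$ injective with closed range. To upgrade this to surjectivity I would re-run the cut-off/patching argument of Proposition \ref{InvL} at the level of \emph{solving} rather than estimating: given $h\in W^{k-2,p}_{\beta,\gamma-2,t}(X_t)$, split $h = \eta_t h + (1-\eta_t)h$, solve $\mathcal P_t u_1 = \eta_t h$ on the wings using the genuine isomorphism of Proposition \ref{isowings} (where $\mathcal P_t = \Delta_f$ exactly), solve $\mathcal P_t u_2 = (1-\eta_t)h$ on the neck using the solvability clause of Proposition \ref{estneck}, set $u^{(0)} = u_1 + u_2$, and observe that $\mathcal P_t u^{(0)} - h$ has $W^{k-2,p}_{\beta,\gamma-2,t}$-norm bounded by $C|\log t|^{-1}\|h\|$ — exactly the error controlled by \eqref{eta} in the proof of Proposition \ref{InvL}. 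For $\delta$ small this error operator has norm $<\tfrac12$, so a Neumann series (i.e. iterating the correction) converges and produces a genuine solution $\mathcal P_t u = h$ with $\|u\|_{W^{k,p}_{\beta,\gamma,t}}\le C\|h\|_{W^{k-2,p}_{\beta,\gamma-2,t}}$, $C$ independent of $t$. Hence $\mathcal P_t: W^{k,p}_{\beta,\gamma,t}(X_t)\to W^{k-2,p}_{\beta,\gamma-2,t}(X_t)$ is an isomorphism with uniformly bounded inverse.

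Finally I would compose: $\mathcal L_{g_t} = (v\mapsto e^{-f_t/2}v)\circ\mathcal P_t$ is a composition of two isomorphisms, hence an isomorphism $W^{k,p}_{\beta,\gamma,t}(X_t)\to W^{k-2,p}_{\beta+1,\gamma-2,t}(X_t)$, and for $u\in W^{k,p}_{\beta,\gamma,t}(X_t)$,
\begin{align*}
\|u\|_{W^{k,p}_{\beta,\gamma,t}} \le C\,\|\mathcal P_t u\|_{W^{k-2,p}_{\beta,\gamma-2,t}} = C\,\|e^{f_t/2}\mathcal L_{g_t}u\|_{W^{k-2,p}_{\beta,\gamma-2,t}} \le C\,\|\mathcal L_{g_t}u\|_{W^{k-2,p}_{\beta+1,\gamma-2,t}},
\end{align*}
with all constants independent of $t\in(0,\delta)$, which is \eqref{UnifInv}. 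The only genuinely non-routine point is the surjectivity/Neumann-series step, i.e. checking that the patched approximate inverse has error $O(|\log t|^{-1})$ uniformly — but this is precisely the content already extracted in the proof of Proposition \ref{InvL} via the gradient bound \eqref{eta} on the cut-off $\eta_t$, so in practice the whole proposition is a short corollary of Propositions \ref{InvL}, \ref{isowings}, and \ref{estneck}; I would expect the bookkeeping of the $e^{\pm f_t/2}$ weight shift (and confirming $|\nabla f_t|$ is uniformly bounded on all of $X_t$, including the scaled neck where $f_t = t\cdot f$ so the gradient is in fact $O(t)$ there) to be the part demanding the most care, though it is not difficult.
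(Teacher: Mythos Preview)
Your proposal is correct and follows exactly the approach the paper takes: the paper's entire proof of Proposition~\ref{LinearIsom} is the single sentence ``We finally obtain the uniform invertibility of $\mathcal{L}_{g_{t}}$ by composing the isomorphism $u\mapsto e^{-\frac{f_{t}}{2}}u$ with the isomorphism $\mathcal{P}_{t}$.'' Your Neumann-series argument for surjectivity of $\mathcal{P}_t$ is in fact \emph{more} than the paper supplies --- the paper simply asserts $\mathcal{P}_t$ is an isomorphism on the strength of Proposition~\ref{InvL} (which literally only states the a~priori estimate) together with the local solvability in Propositions~\ref{isowings} and~\ref{estneck}, leaving the patching-for-existence step implicit; your iteration makes this honest.
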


\section{Perturbation}

\subsection{Lagrangian Neighbourhood Theorem}
Given a Lagrangian submanifold $F:L\to\mathbb{C}^{m}$, by a {\it Lagrangian neighobourhood} we mean a pair $(U_{L}, \Phi_{L})$, where $U_{L}\subset T^{*}L$ is an open neighbourhood of the zero section $\underline{0}\in\Gamma(T^{*}L)$, and $\Phi_{L}:U_{L}\to\mathbb{C}^{m}$ is an embedding such that $\Phi_{L}^{*}\omega_{0} = \hat{\omega}$ and $\Phi_{L}\big|_{\underline{0}} = F$, where $\hat{\omega}$ is the canonical symplectic form on $T^{*}L$. Here, we have identified the zero section $\underline{0}$ with the manifold $L$ itself. We shall construct a Lagrangian neighbourhood of the approximate solution  $\widetilde{F}_{t}:X_{t}\to\mathbb{C}^{m}$ by combining the Lagrangian neighbourhoods of each part of $X_{t}$.

\

\noindent{\it Lagrangian neighbourhood of Grim Reaper cylinder.}\quad
Let $F = F_{0}:\mathbb{R}^{m}\to\mathbb{C}^{m}$ be a Grim Reaper cylinder. We will construct a Lagrangian neighbourhood for $F$. The Lagrangian neoghbourhood of $F^{\lambda}_{\phi}$ can be obtained by composing the Lagrangian meighbourhood of $F_{0}$ with the affine transformation $P_{(\phi, \lambda)}$. 

We need the following theorem.
\begin{theorem}[Weinstein \cite{Weinstein}]\label{weinstein} Let $(M^{2m}, \omega)$ be a symplectic manifold and $i_{L}:L\to M$ be an $m$-dimensional submanifold. Suppose $\{P_{x}\:|\:x\in L\}$ is a smooth family of embedded, noncompact, Lagrangian submanifolds such that $x\in P_{x}$ and $T_{x}L\cap T_{x}P_{x} = \{0\}$. Then there exists an open neighbourhood $U_{L}\subset T^{*}L$ containing the zero section $\underline{0}$ suhc that the fibers of the natural projection $\pi:U_{L}\to L$ is connected, and there is a unique embedding $\Phi_{L}:U_{L}\to M$ with $\Phi_{L}(\pi^{-1}(x))\subset P_{x}$, $\Phi_{L}\big|_{\underline{0}} = i_{L}$, and $\Phi_{L}^{*}\omega = \hat{\omega} + \pi^{*}(i_{L}^{*}\omega)$.
\end{theorem}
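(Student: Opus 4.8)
The plan is to build a tentative symplectic neighbourhood $\Phi_{0}$ of $\underline{0}\subset T^{*}L$ directly out of the family $\{P_{x}\}$ so that $\Phi_{0}^{*}\omega$ is already correct along the zero section, and then to correct the symplectic form by a relative Moser isotopy designed to preserve the fibers of $\pi$, so that the normalizations $\Phi_{L}(\pi^{-1}(x))\subset P_{x}$ and $\Phi_{L}|_{\underline{0}}=i_{L}$ are not spoilt.

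\textbf{Construction of the model map.} Since each $P_{x}$ is Lagrangian and $T_{x}M=T_{x}L\oplus T_{x}P_{x}$ by the transversality hypothesis, $\omega$ restricts to a perfect pairing $T_{x}L\times T_{x}P_{x}\to\mathbb{R}$ (if $v\in T_{x}L$ annihilates $T_{x}P_{x}$ then $v\in (T_{x}P_{x})^{\omega}=T_{x}P_{x}$, so $v=0$), which yields a smooth bundle isomorphism $J\colon T^{*}L\to\bigsqcup_{x}T_{x}P_{x}$ characterised by $\omega(v,J\xi)=\xi(v)$ for $v\in T_{x}L$. Fixing an auxiliary metric on $M$ and setting $\Phi_{0}(x,\xi):=\exp^{P_{x}}_{i_{L}(x)}(J\xi)$, the exponential of the metric induced on $P_{x}$, one obtains (after shrinking to a neighbourhood $U^{0}_{L}$ of $\underline{0}$ of the form $\{|\xi|_{g_{x}}<\rho(x)\}$, which has connected fibers) an embedding with $\Phi_{0}|_{\underline{0}}=i_{L}$ and $\Phi_{0}(\pi^{-1}(x))\subset P_{x}$, whose differential along $\underline{0}$ is $di_{L}$ on horizontal vectors and the inclusion $J\colon T^{*}_{x}L\hookrightarrow T_{x}P_{x}$ on vertical ones. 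A short computation with this differential, using that $P_{x}$ is Lagrangian together with the defining relation for $J$, shows that $\Phi_{0}^{*}\omega$ equals $\omega_{0}:=\hat\omega+\pi^{*}(i_{L}^{*}\omega)$ as a bilinear form at every point of the zero section.

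\textbf{The Moser correction (the main difficulty).} Put $\omega_{1}:=\Phi_{0}^{*}\omega$. Both $\omega_{0}$ and $\omega_{1}$ are closed, nondegenerate near $\underline{0}$, agree along $\underline{0}$, and make every cotangent fiber $\pi^{-1}(x)$ Lagrangian, so $\beta:=\omega_{1}-\omega_{0}$ is closed, vanishes along $\underline{0}$, and vanishes on every pair of vertical vectors. The key point is to take as a primitive of $\beta$ the one produced by the fiberwise homotopy operator of the radial scaling $m_{s}(x,\xi)=(x,s\xi)$, namely $\alpha:=\int_{0}^{1}s^{-1}m_{s}^{*}(\iota_{Y}\beta)\,ds$ with $Y$ the Euler vector field of the cotangent fibers: then $d\alpha=\beta$ (the $s=0$ boundary term vanishes because $\beta|_{\underline{0}}=0$), $\alpha$ vanishes along $\underline{0}$, and $\alpha$ annihilates vertical vectors, since $(m_{s}^{*}\iota_{Y}\beta)(W)=\beta(Y,m_{s*}W)=0$ for vertical $W$ ($Y$ and $m_{s*}W$ are vertical and $\beta$ kills vertical pairs). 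Running Moser's argument on $\omega_{t}:=\omega_{0}+t\,d\alpha$ with $\iota_{X_{t}}\omega_{t}=-\alpha$, the field $X_{t}$ vanishes along $\underline{0}$ (because $\alpha$ does) and is everywhere tangent to the fibers (because $\alpha$ annihilates verticals and the fibers are $\omega_{t}$-Lagrangian). After shrinking $U^{0}_{L}$ so that the time-$1$ flow $\chi_{1}$ of $X_{t}$ exists — the point where one spends a little effort over the noncompact base $L$, using a tube $\{|\xi|_{g_{x}}<\rho(x)\}$ and compactness of $[0,1]$ — $\chi_{1}$ fixes $\underline{0}$ pointwise, preserves every fiber, and satisfies $\chi_{1}^{*}\omega_{1}=\omega_{0}$. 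Then $\Phi_{L}:=\Phi_{0}\circ\chi_{1}$ on the shrunk neighbourhood $U_{L}$ is an embedding with $\Phi_{L}|_{\underline{0}}=i_{L}$, $\Phi_{L}(\pi^{-1}(x))\subset\Phi_{0}(\pi^{-1}(x))\subset P_{x}$, and $\Phi_{L}^{*}\omega=\chi_{1}^{*}\omega_{1}=\omega_{0}=\hat\omega+\pi^{*}(i_{L}^{*}\omega)$, which is all that is claimed. I expect this to be the genuine obstacle: a naive Moser trick would restore the symplectic form but generically break $\Phi_{L}(\pi^{-1}(x))\subset P_{x}$, and the remedy is precisely the choice of a vertical-annihilating primitive, which is available only because the cotangent fibers are Lagrangian for both $\omega_{0}$ and $\omega_{1}$.

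\textbf{Uniqueness.} If $\Phi_{L}$ and $\Phi_{L}'$ both satisfy the conclusion, then on a small enough neighbourhood of $\underline{0}$ the composite $\Psi:=\Phi_{L}^{-1}\circ\Phi_{L}'$ is well defined; it fixes $\underline{0}$, it is a symplectomorphism of $(T^{*}L,\omega_{0})$, and it preserves the fibers of $\pi$ (both maps identify $\pi^{-1}(x)$ with the germ of $P_{x}$ at $i_{L}(x)$). The identity $\Phi_{L}^{*}\omega=(\Phi_{L}')^{*}\omega=\omega_{0}$ forces the vertical part of the differential of each map along $\underline{0}$ to be exactly $J$, hence $d\Psi=\mathrm{id}$ along $\underline{0}$. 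Finally, a fiber-preserving symplectomorphism $\Psi$ of $(T^{*}L,\hat\omega+\pi^{*}(i_{L}^{*}\omega))$ fixing $\underline{0}$ satisfies $\pi\circ\Psi=\pi$, hence $\Psi^{*}\hat\omega=\hat\omega$; writing the tautological $1$-form $\lambda$ and observing that $\Psi^{*}\lambda-\lambda$ is a closed semibasic $1$-form, one gets $\Psi(x,\xi)=(x,\xi+c(x))$ with $c$ a closed $1$-form on $L$, and $\Psi|_{\underline{0}}=\mathrm{id}$ forces $c\equiv 0$, so $\Psi=\mathrm{id}$ and $\Phi_{L}=\Phi_{L}'$ near $\underline{0}$.
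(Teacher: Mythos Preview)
The paper does not prove this theorem; it is quoted from Weinstein \cite{Weinstein} and used as a black box to produce Lagrangian neighbourhoods of the Grim Reaper cylinders. There is therefore no proof in the paper to compare against.

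Your argument is correct and is essentially the standard proof. The construction of $\Phi_{0}$ via the fiberwise exponential along $P_{x}$, together with the identification $J\colon T^{*}_{x}L\to T_{x}P_{x}$ determined by $\omega$, indeed gives $\Phi_{0}^{*}\omega=\omega_{0}$ pointwise along $\underline{0}$. The crux, which you identify correctly, is the fiber-preserving Moser step: the primitive $\alpha$ produced by the fiberwise radial homotopy annihilates vertical vectors precisely because $\beta=\omega_{1}-\omega_{0}$ vanishes on vertical pairs (both $\omega_{0}$ and $\omega_{1}$ make the cotangent fibers Lagrangian), and then $\iota_{X_{t}}\omega_{t}=-\alpha$ forces $X_{t}$ to lie in the $\omega_{t}$-orthogonal of the vertical distribution, which equals the vertical distribution itself since it is Lagrangian for each $\omega_{t}$. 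This is exactly what keeps $\Phi_{L}(\pi^{-1}(x))\subset P_{x}$ intact. Your uniqueness argument is also sound: a fiber-preserving $\omega_{0}$-symplectomorphism fixing $\underline{0}$ satisfies $\Psi^{*}\lambda-\lambda=\pi^{*}c$ for a closed $1$-form $c$ on $L$ (closed semibasic $1$-forms are basic on a bundle with connected fibers), and $\Psi|_{\underline{0}}=\mathrm{id}$ forces $c=0$.

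Two very minor remarks. First, the convergence of $\int_{0}^{1}s^{-1}m_{s}^{*}(\iota_{Y}\beta)\,ds$ near $s=0$ uses only that $Y_{(x,s\xi)}=(0,s\xi)$ already carries a factor of $s$, so the integrand is bounded; you do not strictly need $\beta|_{\underline{0}}=0$ for convergence, though you do need it to conclude $\alpha|_{\underline{0}}=0$. Second, in passing from a local diffeomorphism near each point of $\underline{0}$ to an embedding $\Phi_{0}$ on a tube $\{|\xi|<\rho(x)\}$ one uses a standard injectivity-radius argument, which you implicitly invoke when you say ``after shrinking''; this is routine but worth a word if you write it out in full.
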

In particular, when $i_{L}:L\to M$ is Lagrangian, then $(U_{L}, \Phi_{L})$ defines a Lagrangian neighbourhood of $L$ in $M$. Thus, we have to find the family $\{P_{x}\:|\:x\in L\}$ for $L = F(\mathbb{R}^{m})$ the Grim Reaper cylinder.

Let $\gamma:\mathbb{R}\to\mathbb{C}$ be the Grim Reaper curve, parametrized by arc-length, such that $F(x_{1}, \cdots, x_{m}) = (x_{1}, \cdots, x_{m-1}, \gamma(x_{m}))$. Let $\nu_{m}(x_{m}) := i\gamma'(x_{m})$ and let $\{\nu_{j}:= Je_{j}\}_{j=1}^{m-1}$, where $\{e_{j}\}_{j=1}^{m-1}$ is  the standard basis of $\mathbb{R}^{m-1}\subset\mathbb{C}^{m-1}$, and $J$ is the standard complex structure of $\mathbb{C}^{m-1}$. Then $\{\nu_{j}\}_{j=1}^{m}$ is an orthonormal frame of the normal bundle $T^{\perp}L$. Let $x=F(x_{1}, \cdots, x_{m})\in\mathbb{C}^{m}$, define
\begin{align*}
    P_{x} = \left\{\sum_{j=1}^{m}\xi_{j}\nu_{j}(x)\::\:\sum_{j=1}^{m}\xi_{j}^{2}<d^{2}\right\}
\end{align*}
for $d>0$. Then for each $x = F(x_{1}, \cdots, x_{m})$, $P_{x}$ is Lagrangian and satisfies the required property of Theorem $\ref{weinstein}$. Hence we have a Lagrangian neighbourhood $(U, \Phi)$ to the Grim Reaper cylinder $F:\mathbb{R}^{m}\to\mathbb{C}^{m}$. We will denote the respecting Lagrangian neighbourhoods of the Grim Reaper cylinders $F_{0}$ and $F^{\lambda}_{\phi}$ by $(U_{0}, \Phi_{0})$ and $(U^{\lambda}_{\phi}, \Phi^{\lambda}_{\phi})$.

\

\noindent{\it Lagrangian neighbourhood of Lawlor necks.}\quad
Following the construction of Pacini {\cite{PaciniGlue}}, for any $t\in(0, \delta)$, we have a Lagrangian neighbourhood $(U_{t, N}, \Phi_{t, N})$ for the AC, Lagrangian submanifold $tN: S^{m-1}\times\mathbb{R}\to\mathbb{C}^{m}$. Indeed, if we have a Lagrangian neighbourhood $(U_{N}, \Phi_{N})$, then for $t\in (0, \delta)$, define an $\mathbb{R}^{+}$-action on $T^{*}N$ by $t\cdot(x, \alpha) = (x, t^{2}\alpha)$, and define
\begin{align*}
U_{t, N}:=t\cdot U_{N},\quad\Phi_{t, N}:=t\Phi_{N}t^{-1}.
\end{align*}
Then $(U_{t, N}, \Phi_{t, N})$ is the desired Lagrangian neighbourhood for $tN$.

\

\noindent{\it Lagrangian neighbourhood of the approximate solutions.}\quad
We can now define the Lagrangian neighbourhood for the approximate solution $\widetilde{F}_{t}: X_{t}\to\mathbb{C}^{m}$. Recall that we have a decomposition
\begin{align*}
X_{t} := [\:(\mathbb{R}^{m}_{0}\cup\mathbb{R}^{m}_{\phi})\setminus E_{p}\:]\cup [\: \sigma\times [t\hat{R}, \epsilon]\:]\cup [\:(S^{m-1}\times\mathbb{R})\setminus \hat{E}_{\infty}\:],
\end{align*}
where $(\mathbb{R}^{m}_{0}\cup\mathbb{R}^{m}_{\phi})\setminus E_{p} = \mathbb{R}^{m}_{0}\setminus B_{1}(p_{0})\cup\mathbb{R}^{m}_{\phi}\setminus B_{1}(p_{\phi})$.
Now define
\begin{align*}
U_{\widetilde{F}_{t}} = \left\{\begin{array}{ll}
                          U_{0},&\quad x\in\mathbb{R}^{m}_{0}\setminus B_{1}(p_{0}),\\
                          U_{\phi}^{\lambda},&\quad x\in\mathbb{R}^{m}_{\phi}\setminus B_{1}(p_{\phi}),\\
                          \tau_{dw_{t}}^{-1}(U_{N}),&\quad x\in\sigma\times [t\hat{R}, \epsilon],\\
                          t\cdot U_{N},&\quad x\in(S^{m-1}\times\mathbb{R})\setminus \hat{E}_{\infty},
                         \end{array}\right.
\end{align*}
\begin{align*}
\Phi_{\widetilde{F}_{t}} = \left\{\begin{array}{ll}
                          \Phi_{0},&\quad x\in\mathbb{R}^{m}_{0}\setminus B_{1}(p_{0}),\\
                          \Phi_{\phi}^{\lambda},&\quad x\in\mathbb{R}^{m}_{\phi}\setminus B_{1}(p_{\phi}),\\
                          \Phi_{N}\circ dw_{t},&\quad x\in\sigma\times [t\hat{R}, \epsilon],\\
                          t\Phi_{N}t^{-1},&\quad x\in(S^{m-1}\times\mathbb{R})\setminus \hat{E}_{\infty},
                         \end{array}\right.,
\end{align*}
where $\tau_{dw_{t}}$ is the symplectomorphism given by
\begin{align*}
    \tau_{dw_{t}}:T^{*}\mathbb{R}^{m}\to\ T^{*}\mathbb{R}^{m},\quad \tau_{dw_{t}}(x, y) := (x, y+dw_{t}(x)).
\end{align*}
Then $(U_{\widetilde{F}_{t}}, \Phi_{\widetilde{F}_{t}})$ is the desired Lagrangian neighbourhood for $\widetilde{F}_{t}:X_{t}\to\mathbb{C}^{m}$.

\subsection{Setting up the perturbation problem}
Let $(U_{\widetilde{F}_{t}}, \Phi_{\widetilde{F}_{t}})$ be the Lagrangian meighbourhood constructed in the last subsection.  Consider a closed $1$-form $\eta_{t}\in \Gamma(T^{*}X_{t})$ satisfies $\eta_{t}(X_{t})\in U_{\widetilde{F}_{t}}$, where we regard $\eta_{t}$ as a smooth map $\eta_{t}:X_{t}\to T^{*}X_{t}$. Let $\Gamma_{\eta_{t}} := \eta_{t}(X_{t})\subset U_{\widetilde{F}_{t}}$ denote the graph of $\eta_{t}$, then $\Phi_{\widetilde{F}_{t}}(\Gamma_{\eta_{t}})$ is a Lagrangian submanifold in $\mathbb{C}^{m}$ diffeomorphic to $X_{t}$. Recall that a Lagrangian submanifold $F:X\to\mathbb{C}^{m}$ is a translating soliton if and only if $\Theta(F) := *F^{*}\im\Omega_{f} = 0$. Thus for each small $t$, we want to solve for $\eta_{t}\in \Gamma(T^{*}X_{t})$ satisfying $\Theta_{t}:=*F_{\eta_{t}}^{*}\im\Omega_{f} = 0$, where $F_{\eta_{t}} := \Phi_{\widetilde{F}_{t}}\circ \eta_{t}: X_{t}\to\mathbb{C}^{m}$ is the Lagrangian immersion of the graph of $\eta_{t}$. To this end, we regard $\Theta_{t}$ as a mapping
\begin{align*}
\Theta_{t}: \mathcal{U}_{t}:=\{ \eta_{t}\in\Gamma(T^{*}X_{t})\:|\:\Gamma_{\eta_{t}}\subset U_{\widetilde{F}_{t}}\}\to C^{\infty}(X_{t}),\quad\Theta_{t}(\eta_{t}):=*F_{\eta_{t}}^{*}\im\Omega_{f}.
\end{align*}
Since the value of $\Theta_{t}$ also depends on the first derivative of $\eta_{t}$, we may think of $\Theta_{t}$ as being obtained from an underlying function
\begin{align*}
\Theta'_{t}: U_{\tilde{F}_{t}}\times\otimes^{2}T^{*}_{x}X_{t}\longrightarrow\mathbb{R}
\end{align*}  
satisfying $\Theta'_{t}[x, (\eta_{t})_{x}, (\nabla\eta_{t})_{x}] = \Theta_{t}(\eta_{t})(x)$. Note that while $\Theta_{t}$ is a mapping between infinite dimensional spaces, $\Theta'_{t}$ is a mapping between finite dimensional spaces.

The function $\Theta'_{t}$ can be defined as follows. Choose $(x, \eta)\in U_{\widetilde{F}_{t}}$, let $\{e_{j}\}_{j=1}^{m}$ be a positive orthonormal basis of $T_{x}X_{t}$. For any $\xi\in\otimes^{2}T^{*}_{x}X_{t}$ and $j = 1, \cdots, m$, denote $\iota_{e_{j}}\xi := \xi(e_{j}, \cdot)\in T^{*}_{x}X_{t}$. Then using the splitting $T_{(x, \eta)}U_{\widetilde{F}_{t}} = T_{x}X_{t}\oplus T^{*}_{x}X_{t}$, the vectors $(e_{1}, \iota_{e_{1}}\xi), \cdots, (e_{m}, \iota_{e_{m}}\xi)$ span an $m$-plane in $T_{(x, \eta)}X_{t}$. Define
\begin{align}\label{theta'}
\Theta'_{t}[x, \eta, \xi] := \Phi^{*}_{\tilde{F}_{t}}\im\Omega_{f}\big|_{(x, \eta)}(\:(e_{1}, \iota_{e_{1}}\xi), \cdots, (e_{m}, \iota_{e_{m}}\xi)\:).
\end{align}
It is straightforward to see that $\Theta'_{t}$ defined as above satisfies $\Theta'_{t}[x, (\eta_{t})_{x}, (\nabla\eta_{t})_{x}] = \Theta_{t}(\eta_{t})(x)$.

Define $\widetilde{\Theta}_{t}:=\Theta_{t}\circ d$. Then
by a direct computation, $\widetilde{\Theta}_{t}$ extends to a smooth map
\begin{align*}
\widetilde{\Theta}_{t}: \widetilde{\mathcal{U}}_{t} := \{\:u_{t}\in W^{k, p}_{\beta, \gamma, t}(X_{t})\:|\:\Gamma_{du_{t}}\subset U_{\widetilde{F}_{t}}\}\to W^{k-2, p}_{\beta+1, \gamma-2, t}(X_{t})
\end{align*}
for $k\geq 2$, $p>m$, $\beta, \gamma\in\mathbb{R}$. Note that by Sobolev embedding (Theorem $\ref{sobolevemb}$), $p>m$ implies that $u_{t}$ is in fact continuously differentiable, so the condition $\Gamma_{du_{t}}\subset U_{\widetilde{F}_{t}}$ makes sense. By \cite[Proposition 5.6]{Joyce3}, we may write
\begin{align*}
\widetilde{\Theta}_{t}(u_{t}) = \widetilde{\Theta}_{t}(0) + d\widetilde{\Theta}_{t}\big|_{0}(u_{t}) + Q_{t}(du_{t}),
\end{align*}
where $d\widetilde{\Theta}_{t}\big|_{0}(u_{t}) = \mathcal{L}_{g_{t}}u_{t}$, and $|Q_{t}(du_{t})| = O(|du_{t}|^{2} + |\nabla du_{t}|^{2})$ for small $du_{t}$.

We have estimated $\widetilde{\Theta}_{t}(0)$ in \S4.2 and show the invertibility of $d\widetilde{\Theta}_{t}\big|_{0}$ in \S5.5. It remains to estimate the quadratic term $Q_{t}$.

\subsection{Estimate of the quadratic term}
To estimate $Q_{t}$, we should first estimate the  derivatives of $\Theta'_{t}$. Fix $x\in X_{t}$, let $\partial_{1}, \partial_{2}$ denote the partial derivatives in $\eta, \xi$ direction, respectively. Then we would like to estimate the partial derivatives $\partial_{i}\partial_{j}\Theta'_{t}$ and $\partial_{i}\partial_{j}\partial_{k}\Theta'_{t}$, $i, j, k = 1, 2$.

To this end, we recall the connection on $TU_{\widetilde{F}_{t}}$ defined by Joyce \cite[Definition 5.2]{Joyce3}. The Levi-Civita connection $\nabla$ of the induced metric $g_{t}$ on $X_{t}$ induces a splitting $TU_{\widetilde{F}_{t}} = \mathcal{H}\oplus\mathcal{V}$ into horizontal subbundle $\mathcal{H}\simeq TX_{t}$ and vertical subbundle $\mathcal{V}\simeq T^{*}X_{t}$. Define a connection $\hat{\nabla}$ on $TU_{\tilde{F}_{t}}$ by lifting the Levi-Civita connection $\nabla$ on $\mathcal{H}$, and by using partial differentiation on $\mathcal{V}$. Then it follows from $(\ref{theta'})$ that the partial derivatives of  $\Theta'_{t}$ can be estimated by the covariant derivatives of $\Phi^{*}_{\widetilde{F}_{t}}\im\Omega_{f}$ using $\hat{\nabla}$. For instance, we have
\begin{align*}
|\partial_{1}\partial_{1}\Theta'_{t}|\leq C_{11}|{\hat{\nabla}}^{2}(\Phi^{*}_{\widetilde{F}_{t}}\im\Omega_{f})|, \quad |\partial_{1}\partial_{2}\Theta'_{t}|\leq C_{12}|{\hat{\nabla}}(\Phi^{*}_{\widetilde{F}_{t}}\im\Omega_{f})|, \quad |\partial_{2}\partial_{2}\Theta'_{t}|\leq C_{22}|\Phi^{*}_{\widetilde{F}_{t}}\im\Omega_{f}|
\end{align*}
for some $C_{11}, C_{12}, C_{22}>0$. Now, from the the estimates by Pacini \cite[p250]{PaciniGlue} and our construction of Lagrangian neighborhood, for any $(x, \eta)\in U_{\widetilde{F}_{t}}$, there exist constants $D_{k}>0$ such that
\begin{align}\label{estconn}
    |\hat{\nabla}^{k}\Phi^{*}_{\tilde{F}_{t}}\im\Omega_{f}(x, \eta)|\leq D_{k}e^{-\frac{f_{t}(x)}{2}}\rho^{-k}_{t}(x).
\end{align}
Therefore, there exists $C>0$ such that
\begin{align}\label{esttheta'}
|\partial_{1}\partial_{1}\Theta'_{t}|\leq Ce^{-\frac{f_{t}}{2}}\rho_{t}^{-2},\quad |\partial_{1}\partial_{2}\Theta'_{t}|\leq Ce^{-\frac{f_{t}}{2}}\rho_{t}^{-1},\quad |\partial_{2}\partial_{2}\Theta'_{t}|\leq Ce^{-\frac{f_{t}}{2}}.
\end{align}
Note that the power of $\rho_{t}^{-1}$ depends only on how many times that we apply $\partial_{1}$, since the derivatives in $\xi$-direction does not involve the covariant derivatives using $\hat{\nabla}$. 

\

\noindent{\it $C^{1}$-estimate of $Q_{t}$.}\quad
We prove the $C^{1}$-estimate for $Q_{t}$, following \cite[Proposition 5.8]{Joyce3} and  \cite[Remark 5.4]{PaciniGlue}. First we do the zeroth order estimate.
\begin{lemma}\label{C0}
Fix $x\in X_{t}$. For any $\alpha, \beta\in \mathcal{U}_{t}$, there exists $C>0$ such that
\begin{align}
    \left|\:[Q_{t}(\alpha) - Q_{t}(\beta)](x)\:\right|\leq Ce^{-\frac{f_{t}}{2}}(\rho_{t}^{-1}|\alpha - \beta| + |\nabla\alpha - \nabla\beta|)(\rho_{t}^{-1}|\alpha| + \rho_{t}^{-1}|\beta| + |\nabla\alpha|+|\nabla\beta|),
\end{align}
where the right hand side is evaluated at $x$.
\end{lemma}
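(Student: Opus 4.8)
The plan is to write $Q_t(\alpha)-Q_t(\beta)$ as an integral of its derivative along the segment joining $d\beta$ to $d\alpha$ in the finite-dimensional bundle, and then use the bounds $(\ref{esttheta'})$ on the second partial derivatives of $\Theta'_t$ together with the elementary fact that $Q_t$ is the ``second-order remainder'' of $\widetilde\Theta_t$, so that its first $\eta$-derivative vanishes at the zero section. Concretely, recall that for a fixed point $x$ the value $\Theta_t(\eta_t)(x)$ is recovered from $\Theta'_t[x,(\eta_t)_x,(\nabla\eta_t)_x]$, and $Q_t(du_t)(x)=\Theta'_t[x,(du_t)_x,(\nabla du_t)_x]-\Theta'_t[x,0,0]-d\widetilde\Theta_t|_0(u_t)(x)$ is exactly the Taylor remainder of the function $(\eta,\xi)\mapsto\Theta'_t[x,\eta,\xi]$ at $(\eta,\xi)=(0,0)$, paired with the first-jet $((du_t)_x,(\nabla du_t)_x)$.

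First I would set up notation: fix $x\in X_t$, write $\alpha,\beta\in\mathcal U_t$ and let $a=(\alpha_x,(\nabla\alpha)_x)$, $b=(\beta_x,(\nabla\beta)_x)\in T^*_xX_t\oplus\otimes^2T^*_xX_t$ (using the horizontal/vertical splitting of $TU_{\widetilde F_t}$ coming from $\hat\nabla$, as in Joyce \cite[Definition 5.2]{Joyce3}). Because $Q_t$ is the quadratic remainder, the map $G_x(a):=Q_t(\alpha)(x)$ satisfies $G_x(0)=0$ and $dG_x|_0=0$, so $G_x$ is $C^2$ with $G_x(a)-G_x(b)=\int_0^1 \tfrac{d}{ds}G_x(b+s(a-b))\,ds=\int_0^1 dG_x|_{b+s(a-b)}(a-b)\,ds$, and $dG_x|_c(a-b)=\int_0^1 D^2\Theta'_t[x,\,\text{jet}]\big((a-b),\,\text{something}\big)\,d(\cdots)$ after one more application of the fundamental theorem of calculus in the $c$-variable; collecting terms, $|G_x(a)-G_x(b)|\le C\,\sup_{[b,a]}\|D^2\Theta'_t\|\cdot|a-b|\cdot(|a|+|b|)$, where the sup is over the segment in the $(\eta,\xi)$-fibre. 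The key point is then to feed in $(\ref{esttheta'})$: the mixed second derivatives carry the weight $e^{-f_t/2}\rho_t^{-k}$ where $k$ counts the number of $\partial_1$ (i.e.\ $\eta$-, equivalently $\alpha$-not-$\nabla\alpha$) differentiations. Tracking which component of $a-b$ and of $(|a|+|b|)$ is hit by a $\partial_1$ versus a $\partial_2$ gives precisely the four-term product: a $\rho_t^{-1}|\alpha-\beta|$ whenever the difference enters through the $\eta$-slot, a $|\nabla\alpha-\nabla\beta|$ when it enters through the $\xi$-slot, and likewise $\rho_t^{-1}|\alpha|+\rho_t^{-1}|\beta|+|\nabla\alpha|+|\nabla\beta|$ for the amplitude factor, with an overall $e^{-f_t/2}$ from $(\ref{esttheta'})$. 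All constants are $t$-independent since the bounds $(\ref{estconn})$ are, and the segment $[b,a]$ stays inside $U_{\widetilde F_t}$ by convexity of the fibres.

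The main obstacle I expect is bookkeeping the weight powers correctly: one must be careful that the $\rho_t^{-1}$ weight attaches only to occurrences of $\alpha$ (not $\nabla\alpha$) and that taking the difference $Q_t(\alpha)-Q_t(\beta)$ rather than estimating $Q_t(\alpha)$ alone really does produce one factor of the $\alpha-\beta$ difference linearly — this is where the Taylor-remainder structure (vanishing of $Q_t$ and $dQ_t$ at $0$) is essential, and where one should invoke \cite[Proposition 5.6]{Joyce3} / \cite[Proposition 5.8]{Joyce3} rather than re-deriving it. A secondary technical point is that $\Theta'_t$ depends on $x$ also through the orthonormal frame $\{e_j\}$ and through $g_t$, but since we differentiate only in the fibre directions $\eta,\xi$ for this lemma, those dependencies are harmless. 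Once Lemma $\ref{C0}$ is in hand, the analogous first-order ($\nabla Q_t$) estimate follows by differentiating the same integral representation and using the third-order bounds $|\partial_i\partial_j\partial_k\Theta'_t|$ together with $(\ref{estconn})$ for $k=1,2,3$, and then the full $C^1$, hence weighted-Sobolev, estimate of $Q_t$ is assembled in the next subsection.
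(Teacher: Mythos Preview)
Your approach is correct and is essentially the same as the paper's: write $Q_t(\alpha)-Q_t(\beta)$ as a double integral via two applications of the fundamental theorem of calculus along segments in the fibre (using that $Q_t$ and its differential vanish at the zero section), then bound the resulting Hessian of $\Theta'_t$ by the pointwise estimates $(\ref{esttheta'})$ and track the $\rho_t^{-1}$ weights according to how many $\partial_1$'s appear. The paper parametrises the first segment as $\gamma(s)=s\alpha+(1-s)\beta$ and obtains both Hessian arguments equal to $\alpha-\beta$, then replaces one copy by $|\alpha|+|\beta|$ via the triangle inequality; your version, which integrates the second time from $0$ to the point $c$ on the segment, gives one argument $\approx c$ (hence bounded by $|\alpha|+|\beta|$) directly---these are equivalent. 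One small correction: the sup of $\|D^2\Theta'_t\|$ should be over the convex hull of $\{0,a,b\}$ rather than just $[b,a]$, since the second integration runs out from the origin; this is harmless because the bounds $(\ref{esttheta'})$ hold uniformly on the fibre of $U_{\widetilde F_t}$ over $x$.
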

\begin{proof}
Since $\mathcal{U}_{t}$ is convex, the curve $\gamma(s):=s\alpha + (1-s)\beta$ for $s\in [0, 1]$ is contained in $\mathcal{U}_{t}$. Then
\begin{align*}
    Q_{t}(\alpha) - Q_{t}(\beta) = Q_{t}(\gamma(1)) - Q_{t}(\gamma(0)) = \int_{0}^{1}\frac{d}{ds}Q_{t}(\gamma(s))\:ds.
\end{align*}
Since $\Theta_{t}(\gamma(s))= \Theta_{t}(0) + d\Theta_{t}\big|_{0}(\gamma(s)) + Q_{t}(\gamma(s))$ and $\gamma'(s) = \alpha - \beta$,
\begin{align*}
    \frac{d}{ds}Q_{t}(\gamma(s)) &= \frac{d}{ds}\Theta_{t}(\gamma(s)) - d\Theta_{t}\big|_{0}(\gamma'(s)) = d\Theta_{t}\big|_{\gamma(s)}(\gamma'(s)) - d\Theta_{t}\big|_{0}(\gamma'(s))\\
    &=\int_{0}^{s}\frac{d}{dl}d\Theta_{t}\big|_{\gamma(l)}(\alpha - \beta)\:dl = \int_{0}^{s}(\nabla d\Theta_{t})_{\gamma(l)}(\alpha - \beta, \alpha - \beta)\:dl,
\end{align*}
where $\nabla d\Theta_{t}$ is the Hessian of $\Theta_{t}$ on $\mathcal{U}_{t}$. Since $\Theta'[x, \alpha, \nabla\alpha] = \Theta_{t}(\alpha)(x)$, we can replace $\nabla d\Theta_{t}$ by the Hessian of $\Theta'_{t}$, which can be expressed using partial derivatives $\partial_{i}\partial_{j}\Theta'_{t}$.
Indeed, we have
\begin{align*}
    (\nabla d\Theta_{t})_{\gamma(l)}(\alpha - \beta, \alpha - \beta) = (\hat{\nabla}d\Theta'_{t})_{(\gamma(l), \nabla\gamma(l))}[(\alpha-\beta, \nabla(\alpha - \beta)), (\alpha-\beta, \nabla(\alpha - \beta))].
\end{align*}
Therefore, by $(\ref{esttheta'})$, 
\begin{align*}
    \left|\frac{d}{ds}Q_{t}(\gamma(s))\right|&\leq Ce^{-\frac{f_{t}}{2}}(\:\rho_{t}^{-2}|\alpha - \beta|^{2} + 2\rho_{t}^{-1}|\alpha - \beta||\nabla\alpha - \nabla\beta| + |\nabla\alpha - \nabla\beta|^{2}\:)\\
    &= Ce^{-\frac{f_{t}}{2}}(\rho_{t}^{-1}|\alpha - \beta| + |\nabla\alpha - \nabla\beta|)(\rho_{t}^{-1}|\alpha - \beta| + |\nabla\alpha - \nabla\beta||)\\
    &\leq Ce^{-\frac{f_{t}}{2}}(\rho_{t}^{-1}|\alpha - \beta| + |\nabla\alpha - \nabla\beta|)(\rho_{t}^{-1}(|\alpha| + |\beta|) + (|\nabla\alpha|+|\nabla\beta|)),
\end{align*}
and the result follows.
\end{proof}

The first order estimate follows similarly.
\begin{lemma}\label{C1}
Fix $x\in X_{t}$ and given $\alpha, \beta\in \mathcal{U}_{t}$. Regard $Q_{t}(\alpha)$ and $Q_{t}(\beta)$ as functions on $X_{t}$. Then there exists $C>0$ such that
\begin{align*}
    |\:[\rho_{t}\:d(Q_{t}(\alpha) - Q_{t}(\beta))](x)\:&|\leq Ce^{-\frac{f_{t}}{2}}\{\:\rho_{t}^{-1}|\alpha - \beta|(\rho_{t}^{-1}|\alpha| + \rho_{t}^{-1}|\beta|) + \rho_{t}^{-1}|\alpha - \beta|(|\nabla\alpha| + |\nabla\beta|)\\
    &+\rho_{t}^{-1}|\alpha - \beta|(\rho_{t}|\nabla^{2}\alpha| + \rho_{t}|\nabla^{2}\beta|) + |\nabla\alpha - \nabla\beta|(\rho_{t}^{-1}|\alpha| + \rho_{t}^{-1}|\beta|)\\
    &+ |\nabla\alpha - \nabla\beta|(|\nabla\alpha| + |\nabla\beta|) + |\nabla\alpha - \nabla\beta|(\rho_{t}|\nabla^{2}\alpha| + \rho_{t}|\nabla^{2}\beta|)\\
    &+\rho_{t}|\nabla^{2}\alpha - \nabla^{2}\beta|(\rho_{t}^{-1}|\alpha| + \rho_{t}^{-1}|\beta|) + \rho_{t}|\nabla^{2}\alpha - \nabla^{2}\beta|(|\nabla\alpha| + |\nabla\beta|)\:\}.
\end{align*}
\end{lemma}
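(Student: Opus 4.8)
\emph{Proof idea.} The plan is to differentiate the argument of Lemma~\ref{C0} one derivative further. As in that proof, put $\gamma(s):=s\alpha+(1-s)\beta\in\mathcal{U}_t$ and start from the identity implicit in the proof of Lemma~\ref{C0},
\begin{align*}
Q_t(\alpha)-Q_t(\beta)=\int_0^1\!\!\int_0^s(\hat\nabla d\Theta'_t)_{(\gamma(l),\nabla\gamma(l))}\big[(\alpha-\beta,\nabla(\alpha-\beta)),\,(\alpha-\beta,\nabla(\alpha-\beta))\big]\,dl\,ds,
\end{align*}
whose integrand is, for each $x\in X_t$, a pointwise expression built from $\Theta'_t$ and its $\hat\nabla$-derivatives, from the base point $(\gamma(l)(x),\nabla\gamma(l)(x))$, and from the $2$-jets of $\alpha,\beta$ at $x$. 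Applying the exterior derivative $d$ in $x$ under the integral sign and expanding by the chain rule produces three kinds of terms: (a) $d$ acts on the geometric $x$-dependence of the Hessian $\hat\nabla d\Theta'_t$, giving one further $\hat\nabla$ of $\Phi^*_{\widetilde{F}_t}\im\Omega_f$ contracted with the two argument jets $(\alpha-\beta,\nabla(\alpha-\beta))$; (b) $d$ acts through the base point, giving $\hat\nabla^2 d\Theta'_t$ contracted against the base-point velocity $(\nabla\gamma(l),\nabla^2\gamma(l))$ and the two argument jets; (c) $d$ acts on one of the two argument jets, giving $\hat\nabla d\Theta'_t$ contracted against $(\nabla(\alpha-\beta),\nabla^2(\alpha-\beta))$ and the remaining copy of $(\alpha-\beta,\nabla(\alpha-\beta))$. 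Terms coming from differentiating the connection coefficients implicit in $\hat\nabla$ are harmless, being covered by Lemma~\ref{metric}.

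For the estimates I would first record, exactly as in the derivation of $(\ref{esttheta'})$ but retaining one more $\hat\nabla$, that $(\ref{estconn})$ gives for all $a,b\geq 0$
\begin{align*}
|\partial_1^{a}\partial_2^{b}\Theta'_t|\leq C\,e^{-\frac{f_t}{2}}\rho_t^{-a},
\end{align*}
the power of $\rho_t^{-1}$ counting the number of $\eta$-direction (equivalently horizontal $\hat\nabla$) differentiations, those in the $\xi$-direction being cost-free. Substituting these into (a)--(c), estimating $|\nabla^{j}\gamma(l)|\leq|\nabla^{j}\alpha|+|\nabla^{j}\beta|$ for $j=0,1,2$ (as $\gamma(l)$ is a convex combination), carrying out the trivial $l,s$-integrations, and finally multiplying through by the weight $\rho_t$ on the left-hand side, each resulting term becomes a product of three or fewer jet-magnitude factors with an $e^{-f_t/2}$ prefactor and a definite power of $\rho_t$. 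Applying the triangle inequality to one difference factor, $|\nabla^{j}(\alpha-\beta)|\leq|\nabla^{j}\alpha|+|\nabla^{j}\beta|$, exactly as in Lemma~\ref{C0}, and absorbing any surplus zeroth-order difference factor into the constant via the a priori bound $\rho_t^{-1}|\alpha|,\,\rho_t^{-1}|\beta|\leq C$ (valid because $\alpha,\beta\in\mathcal{U}_t$ and the fibres of $U_{\widetilde{F}_t}$ are of size $O(\rho_t)$), every surviving term assumes the form (difference factor)$\,\times\,$(sum-of-$\alpha,\beta$ factor) with the two factors drawn respectively from $\{\rho_t^{-1}|\alpha-\beta|,\,|\nabla\alpha-\nabla\beta|,\,\rho_t|\nabla^2\alpha-\nabla^2\beta|\}$ and $\{\rho_t^{-1}|\alpha|+\rho_t^{-1}|\beta|,\,|\nabla\alpha|+|\nabla\beta|,\,\rho_t|\nabla^2\alpha|+\rho_t|\nabla^2\beta|\}$, which is precisely the eight-term right-hand side of the statement.

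The routine but delicate part is the bookkeeping: one must verify that in every term the $\rho_t$-powers assemble correctly, i.e.\ that a factor carrying $\nabla^2$ always comes with one power of $\rho_t$ and a factor carrying $\nabla^0$ with one power of $\rho_t^{-1}$, so that the overall $\rho_t$-homogeneity matches the weighted Sobolev norms used in the sequel. This is forced by the weight-counting above: each extra $\partial_1$ or horizontal $\hat\nabla$ contributes exactly one $\rho_t^{-1}$, which is exactly compensated by the single additional $\rho_t$ multiplied in on the left. No analytic input beyond $(\ref{estconn})$, the convexity of $\mathcal{U}_t$, and the triangle inequality is required; the only points demanding care are organising the chain-rule expansion so that no term is dropped, and ensuring that the genuinely cubic contributions arising in families (a) and (b) are reduced to the quadratic form above by the a priori bound rather than left dangling. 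The main obstacle is thus purely the combinatorial control of this expansion and the matching of weights, not any new estimate.
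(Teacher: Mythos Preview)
Your approach is essentially the paper's: write $Q_t(\alpha)-Q_t(\beta)$ as the double integral of the Hessian of $\Theta'_t$ obtained in Lemma~\ref{C0}, apply $d$ under the integral sign, and control all resulting derivatives of $\Theta'_t$ via $(\ref{estconn})$ before replacing some difference factors by the corresponding sums. The paper is terser and writes down a purely quadratic intermediate bound directly, whereas you are more explicit about the cubic contributions coming from differentiating through the base point (your family~(b)) and their absorption via the a~priori bound $\rho_t^{-1}|\alpha|\leq C$ granted by $\alpha\in\mathcal{U}_t$; both routes yield the same eight terms.
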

\begin{proof}
Let $\gamma(s) = s\alpha + (1-s)\beta$, $s\in [0, 1]$. As in the proof of Lemma $\ref{C0}$, 
\begin{align*}
    Q_{t}(\alpha) - Q_{t}(\beta) =  \int_{0}^{1}\frac{d}{ds}Q_{t}(\gamma(s))\:ds=\int_{0}^{1}\int_{0}^{s}(\nabla d\Theta_{t})_{\gamma(l)}(\alpha - \beta, \alpha - \beta)\:dl\:ds.
\end{align*}
Hence
\begin{align*}
    d(Q_{t}(\alpha) - Q_{t}(\beta) ) = \int_{0}^{1}\int_{0}^{s}d[(\nabla d\Theta_{t})_{\gamma(l)}(\alpha - \beta, \alpha - \beta)]\:dl\:ds.
\end{align*}
Express the Hessian $\nabla d\Theta_{t}$ by using partial derivatives $\partial_{i}\partial_{j}\Theta'_{t}$ and use $(\ref{estconn})$, we have
\begin{align*}
    |d[(\nabla d\Theta_{t})_{\gamma(l)}(\alpha - \beta, \alpha - &\beta)]|\\
    \leq Ce^{-\frac{f_{t}}{2}}\{\:&\rho_{t}^{-3}|\alpha - \beta|^{2} + 4\rho_{t}^{-2}|\alpha - \beta||\nabla\alpha - \nabla\beta| + 3\rho_{t}^{-1}|\nabla\alpha - \nabla\beta|^{2}\\
    &+ 2\rho_{t}^{-1}|\alpha - \beta||\nabla^{2}\alpha - \nabla^{2}\beta| + 2|\nabla\alpha - \nabla\beta||\nabla^{2}\alpha - \nabla^{2}\beta|\:\}
\end{align*}
for some $C>0$. Substitute some $|\nabla^{k}\alpha - \nabla^{k}\beta|$ by $|\nabla^{k}\alpha| + |\nabla^{k}\beta|$, $k = 0, 1, 2$, and manipulate, we obtain the result.
\end{proof}

Now we assume $\alpha = du$, $\beta = dv$ for $u, v\in C^{\infty}(X_{t})$.
\begin{corollary}
There exists $C>0$ such that
\begin{align}\label{zero}
    \left|\:[Q_{t}(du) - Q_{t}(dv)](x)\:\right|\leq Ce^{-\frac{f_{t}}{2}}(\rho_{t}^{-1}|du - dv| + |\nabla du - \nabla dv|)(\:\|u\|_{C^{2}_{0, 2, t}} + \|v\|_{C^{2}_{0, 2, t}}\:).
\end{align}
and
\begin{align}\label{1}
    |\:[\:d(Q_{t}(du) - Q_{t}(dv))](x)\:|\leq  Ce^{-\frac{f_{t}}{2}}&\{\:(\rho_{t}^{-2}|du - dv| + \rho_{t}^{-1}|\nabla du - \nabla dv|\nonumber\\
    &\qquad+ |\nabla^{2}du - \nabla^{2}dv|)(\:\|u\|_{C^{2}_{0, 2, t}} + \|v\|_{C^{2}_{0, 2, t}})\nonumber\\
    &\qquad+\|u - v\|_{C^{2}_{0, 2, t}}(|\nabla^{2}du| + |\nabla^{2}dv|)\:\}.
\end{align}
\end{corollary}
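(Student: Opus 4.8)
The plan is to deduce both inequalities directly from Lemmas $\ref{C0}$ and $\ref{C1}$ by specializing to $\alpha = du$, $\beta = dv$ with $u, v \in \widetilde{\mathcal{U}}_{t}$, and then re-expressing the resulting pointwise bounds in terms of the weighted norm $\|\cdot\|_{C^{2}_{0, 2, t}}$. Since $u, v \in \widetilde{\mathcal{U}}_{t}$, the $1$-forms $du$, $dv$ are closed and have graphs lying in $U_{\widetilde{F}_{t}}$, hence $du, dv \in \mathcal{U}_{t}$ and both lemmas apply; under this substitution one has $|\alpha - \beta| = |du - dv|$, $|\nabla\alpha - \nabla\beta| = |\nabla du - \nabla dv|$, $|\nabla^{2}\alpha - \nabla^{2}\beta| = |\nabla^{2} du - \nabla^{2} dv|$, and likewise for the undifferenced quantities. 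The only property of the norm I will use is its elementary pointwise consequence: for every $w$ and every $x \in X_{t}$ one has $\rho_{t}^{-1}|dw|(x) \leq \|w\|_{C^{2}_{0, 2, t}}$ and $|\nabla dw|(x) \leq \|w\|_{C^{2}_{0, 2, t}}$, these being the $j=1$ and $j=2$ terms in the definition $\|w\|_{C^{2}_{0, 2, t}} = \sum_{j=0}^{2}\sup_{X_{t}}|\rho_{t}^{-2+j}\nabla^{j} w|_{g_{t}}$.

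For $(\ref{zero})$ I would apply Lemma $\ref{C0}$ with $\alpha = du$, $\beta = dv$. Its first factor is already in the desired form, while for the second factor the pointwise bounds above give $\rho_{t}^{-1}|du| + \rho_{t}^{-1}|dv| + |\nabla du| + |\nabla dv| \leq 2(\|u\|_{C^{2}_{0, 2, t}} + \|v\|_{C^{2}_{0, 2, t}})$; absorbing the constant $2$ into $C$ yields $(\ref{zero})$.

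For $(\ref{1})$ I would start from Lemma $\ref{C1}$, which bounds $|\rho_{t}\, d(Q_{t}(du) - Q_{t}(dv))|$ by a sum of eight terms, and divide through by $\rho_{t}$. Grouping the eight terms by their common difference factor, after division by $\rho_{t}$ the three terms carrying $\rho_{t}^{-1}|du - dv|$ contribute $\rho_{t}^{-2}|du - dv|$ times the bracket $B := \rho_{t}^{-1}|du| + \rho_{t}^{-1}|dv| + |\nabla du| + |\nabla dv| + \rho_{t}|\nabla^{2}du| + \rho_{t}|\nabla^{2}dv|$, the three terms carrying $|\nabla du - \nabla dv|$ contribute $\rho_{t}^{-1}|\nabla du - \nabla dv|$ times $B$, and the two terms carrying $\rho_{t}|\nabla^{2}du - \nabla^{2}dv|$ contribute $|\nabla^{2}du - \nabla^{2}dv|$ times $B' := \rho_{t}^{-1}|du| + \rho_{t}^{-1}|dv| + |\nabla du| + |\nabla dv|$. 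In the summands of $B$ and $B'$ involving no third derivative I bound the bracket by $\|u\|_{C^{2}_{0, 2, t}} + \|v\|_{C^{2}_{0, 2, t}}$ via the pointwise estimates, which produces precisely the first block $(\rho_{t}^{-2}|du - dv| + \rho_{t}^{-1}|\nabla du - \nabla dv| + |\nabla^{2}du - \nabla^{2}dv|)(\|u\|_{C^{2}_{0, 2, t}} + \|v\|_{C^{2}_{0, 2, t}})$ of $(\ref{1})$. For the two summands $\rho_{t}|\nabla^{2}du|$, $\rho_{t}|\nabla^{2}dv|$ of $B$, the weight $\rho_{t}$ cancels the $\rho_{t}^{-2}$ or $\rho_{t}^{-1}$ in the difference factor, leaving $\rho_{t}^{-1}|du - dv|$ or $|\nabla du - \nabla dv|$ times $|\nabla^{2}du| + |\nabla^{2}dv|$; applying the pointwise estimates to $w = u - v$ bounds each of these by $\|u - v\|_{C^{2}_{0, 2, t}}(|\nabla^{2}du| + |\nabla^{2}dv|)$, which is the second block of $(\ref{1})$. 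Collecting the pieces and renaming constants completes the proof.

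The computation is routine; the one point that needs care is the bookkeeping of the $\rho_{t}$-weights, namely verifying that after dividing out the overall factor $\rho_{t}$ from Lemma $\ref{C1}$ every term containing an undifferenced third derivative $\rho_{t}|\nabla^{2}du|$ is matched with a negative power of $\rho_{t}$ from its difference factor, so that a genuine difference quantity controlled by $\|u - v\|_{C^{2}_{0, 2, t}}$ is liberated and no residual power of $\rho_{t}$ survives --- in particular no residual negative power, which would be fatal since on the neck region $\rho_{t}$ is comparable to $t\hat{r}$ and hence not uniformly bounded below as $t \to 0$. The grouping by difference factor described above makes this verification transparent.
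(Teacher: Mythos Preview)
Your proposal is correct and is precisely the argument the paper has in mind: the corollary is stated there without proof, immediately after the sentence ``Now we assume $\alpha = du$, $\beta = dv$ for $u, v\in C^{\infty}(X_{t})$,'' so the intended derivation is exactly the substitution-and-regrouping you carry out. Your bookkeeping of the $\rho_{t}$-weights is accurate, and the split of the third-derivative terms into the $\|u-v\|_{C^{2}_{0,2,t}}(|\nabla^{2}du|+|\nabla^{2}dv|)$ block matches how those terms must be handled for the subsequent integral estimate in Proposition~\ref{estQ}.
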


Next we prove the integral estimate of $Q_{t}$.
\begin{proposition}\label{estQ}
For any $u, v\in C^{\infty}(X_{t})$ with $du, dv\in U_{\tilde{F}_{t}}$, 
\begin{align*}
\|Q_{t}(du) - &Q_{t}(dv)\|_{W^{1, p}_{\beta+1, \gamma-2, t}}\\
&\leq C\left\{(\:\|u\|_{C^{2}_{0, 2, t}} + \|v\|_{C^{2}_{0, 2, t}}\:)\|u - v\|_{W^{3, p}_{\beta, \gamma, t}} + (\:\|u\|_{W^{3, p}_{\beta, \gamma, t}}  + \|v\|_{W^{3, p}_{\beta, \gamma, t}} ) \|u - v\|_{C^{2}_{0, 2, t}}\right\}.
\end{align*}
\end{proposition}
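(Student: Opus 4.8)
The plan is to reduce the weighted Sobolev estimate to the pointwise estimates \eqref{zero} and \eqref{1} and then integrate against the weight $e^{(\beta+1)f_t/2}\rho_t^{-\gamma+2}$, using the Sobolev embedding Theorem~\ref{sobolevemb} to convert $C^2_{0,2,t}$-factors coming from \emph{one} of the two functions into $W^{3,p}_{\beta,\gamma,t}$-norms. First I would observe that by definition of the $W^{1,p}_{\beta+1,\gamma-2,t}$-norm,
\begin{align*}
\|Q_t(du)-Q_t(dv)\|_{W^{1,p}_{\beta+1,\gamma-2,t}}^p
=\int_{X_t}\!\Big|e^{\frac{(\beta+1)f_t}{2}}\rho_t^{2-\gamma}\big(Q_t(du)-Q_t(dv)\big)\Big|^p\rho_t^{-m}\,dV_{g_t}
+\int_{X_t}\!\Big|e^{\frac{(\beta+1)f_t}{2}}\rho_t^{3-\gamma}\,d\big(Q_t(du)-Q_t(dv)\big)\Big|^p\rho_t^{-m}\,dV_{g_t},
\end{align*}
and then substitute the pointwise bounds from the Corollary. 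The factor $e^{-f_t/2}$ appearing on the right of \eqref{zero} and \eqref{1} combines with $e^{(\beta+1)f_t/2}$ to give $e^{\beta f_t/2}$, which is exactly the weight attached to the $W^{k,p}_{\beta,\gamma,t}$-spaces; this is the reason the target exponent is $\beta+1$ rather than $\beta$.

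The key step is the asymmetric treatment of the two factors in each product on the right-hand side of \eqref{zero}–\eqref{1}. In each term one factor is a ``difference'' quantity like $\rho_t^{-1}|du-dv|+|\nabla du-\nabla dv|+\rho_t|\nabla^2 du-\nabla^2 dv|$ (which, weighted by $e^{\beta f_t/2}\rho_t^{-\gamma+j}$ and integrated, is controlled by $\|u-v\|_{W^{3,p}_{\beta,\gamma,t}}$) and the other factor is a ``size'' quantity; the latter I would bound in $L^\infty$ by $\|u\|_{C^2_{0,2,t}}+\|v\|_{C^2_{0,2,t}}$, pulling it out of the integral. Concretely, for the zeroth-order part, $e^{\beta f_t/2}\rho_t^{2-\gamma}\big(\rho_t^{-1}|du-dv|+|\nabla du-\nabla dv|\big)\le e^{\beta f_t/2}\big(\rho_t^{-\gamma+1}|\nabla(u-v)|+\rho_t^{-\gamma+2}|\nabla^2(u-v)|\big)$, whose $L^p(\rho_t^{-m}dV_{g_t})$-norm is $\le\|u-v\|_{W^{3,p}_{\beta,\gamma,t}}$ (note $du=\nabla u$, $\nabla du=\nabla^2 u$, so one loses one derivative relative to the $W^{3,p}$-index, which is harmless). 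For the first-order part, the terms grouped with $\|u\|_{C^2_{0,2,t}}+\|v\|_{C^2_{0,2,t}}$ produce $\rho_t^{3-\gamma}(\rho_t^{-2}|du-dv|+\rho_t^{-1}|\nabla du-\nabla dv|+|\nabla^2 du-\nabla^2 dv|)\le \rho_t^{-\gamma+1}|\nabla(u-v)|+\rho_t^{-\gamma+2}|\nabla^2(u-v)|+\rho_t^{-\gamma+3}|\nabla^3(u-v)|$, again bounded by $\|u-v\|_{W^{3,p}_{\beta,\gamma,t}}$ after integration; the remaining term in \eqref{1} has the roles reversed, with $\|u-v\|_{C^2_{0,2,t}}$ pulled out and $\rho_t^{3-\gamma}(|\nabla^2 du|+|\nabla^2 dv|)=\rho_t^{3-\gamma}(|\nabla^3 u|+|\nabla^3 v|)$ integrating to $\|u\|_{W^{3,p}_{\beta,\gamma,t}}+\|v\|_{W^{3,p}_{\beta,\gamma,t}}$.

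Finally I would apply Theorem~\ref{sobolevemb} (with $k=2$, $l$ chosen so $lp>m$, and $\beta'=\beta$, $\gamma'=\gamma-\text{const}$ matched appropriately, using $p>m$) to absorb the $C^2_{0,2,t}$-norms that are \emph{not} of the difference $u-v$ into $W^{3,p}_{\beta,\gamma,t}$-norms, at the cost of a $t$-independent constant; this is legitimate precisely because the approximate solutions here are $T$-finite, which is the hypothesis under which Theorem~\ref{sobolevemb} holds. Summing the two integrals and taking $p$-th roots gives the stated inequality. The main obstacle is bookkeeping: \eqref{1} has eight terms, and one must verify for each that after multiplication by the weight $e^{(\beta+1)f_t/2}\rho_t^{3-\gamma}$ and use of $e^{-f_t/2}$, every power of $\rho_t$ lands in the admissible range so that the factors genuinely reassemble into $\|u-v\|_{W^{3,p}_{\beta,\gamma,t}}$ and $\|u\|_{C^2_{0,2,t}}+\|v\|_{C^2_{0,2,t}}$ (or the symmetric pairing) — no cancellation or borrowing of derivatives across regions is needed, so there is no analytic subtlety beyond what is already in the pointwise lemmas, but the $\rho_t$-exponent arithmetic must be done carefully, and one must make sure the $C^2_{0,2,t}$ factors pulled out in $L^\infty$ are finite, which again is where $T$-finiteness and $p>m$ enter.
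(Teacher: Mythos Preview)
Your first two paragraphs are correct and match the paper's proof essentially line for line: split the $W^{1,p}_{\beta+1,\gamma-2,t}$-norm into its zeroth- and first-order pieces, insert the pointwise bounds \eqref{zero} and \eqref{1}, use the $e^{-f_t/2}$ to shift $\beta+1$ back to $\beta$, pull the sup-norm factor out as $\|u\|_{C^2_{0,2,t}}+\|v\|_{C^2_{0,2,t}}$ (or $\|u-v\|_{C^2_{0,2,t}}$ for the last term of \eqref{1}), and integrate the remaining factor against the $W^{3,p}_{\beta,\gamma,t}$-weight. That is the whole proof.

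The ``Finally'' paragraph, however, does not belong here and contains an error. The conclusion of Proposition~\ref{estQ} explicitly keeps the $C^2_{0,2,t}$-norms on the right-hand side; no Sobolev embedding is invoked, and neither $T$-finiteness nor $p>m$ is assumed. Converting those $C^2_{0,2,t}$-norms to $W^{3,p}_{\beta,\gamma,t}$-norms is precisely the content of the \emph{next} result, Corollary~\ref{QuadraticEst}, under the additional hypotheses you mention. Moreover, your claim that the embedding costs only ``a $t$-independent constant'' is wrong: Theorem~\ref{sobolevemb} with $\beta'=0>\beta$ and $\gamma'=2>\gamma$ produces the factor $t^{\gamma-2}$, and this $t$-dependent factor is exactly what appears in Corollary~\ref{QuadraticEst}. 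So drop the last paragraph entirely and you have the paper's proof.
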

\begin{proof}
By $(\ref{zero})$, we estimate
\begin{align}\label{1'}
   \int_{X_{t}}|e^{\frac{(\beta+1)f_{t}}{2}}&\rho_{t}^{-\gamma+2}(Q_{t}(du) - Q_{t}(dv))|^{p}\rho_{t}^{-m}\:dV_{g_{t}}\nonumber\\
   &\leq C(\:\|u\|_{C^{2}_{0, 2, t}} + \|v\|_{C^{2}_{0, 2, t}}\:)^{p}\int_{X_{t}}|e^{\frac{\beta f_{t}}{2}}\rho_{t}^{-\gamma+2}(\rho_{t}^{-1}|du - dv| + |\nabla du - \nabla dv|)|^{p}\rho_{t}^{-m}\:dV_{g_{t}}\nonumber\\
   &\leq C(\:\|u\|_{C^{2}_{0, 2, t}} + \|v\|_{C^{2}_{0, 2, t}}\:)^{p}\:\|u - v\|_{W^{2, p}_{\beta, \gamma, t}}^{p}.
\end{align}
By $(\ref{1})$,
\begin{align*}
    \int_{X_{t}}|e^{\frac{(\beta + 1)f_{t}}{2}}&\rho_{t}^{-\gamma+3}d[(Q_{t}(du) - Q_{t}(dv))]|^{p}\rho_{t}^{-m}\:dV_{g_{t}}\\
    &\leq C(\:\|u\|_{C^{2}_{0, 2, t}} + \|v\|_{C^{2}_{0, 2, t}})^{p}\sum_{j = 1}^{3}\int_{X_{t}}|e^{\frac{\beta f_{t}}{2}}\rho_{t}^{-\gamma + j}\nabla^{j}(u - v)|^{p}\rho_{t}^{-m}\:dV_{g_{t}}\\
    &\qquad\qquad+ C\|u - v\|_{C^{2}_{0, 2, t}}^{p}\int_{X_{t}}|e^{\frac{\beta f_{t}}{2}}\rho_{t}^{-\gamma+3}(|\nabla^{2}du| + |\nabla^{2}dv|)|^{p}\rho_{t}^{-m}\:dV_{g_{t}}.
\end{align*}
Hence we find that there exists $C>0$ such that
\begin{align}\label{3'}
    \int_{X_{t}}|e^{\frac{(\beta + 1)f_{t}}{2}}&\rho_{t}^{-\gamma+3}d[(Q_{t}(du) - Q_{t}(dv))]|^{p}\rho_{t}^{-m}\:dV_{g_{t}}\nonumber\\
    &\leq C(\:\|u\|_{C^{2}_{0, 2, t}} + \|v\|_{C^{2}_{0, 2, t}})^{p}\|u-v\|_{W^{3, p}_{\beta, \gamma, t}}^{p} + \|u - v\|_{C^{2}_{0, 2, t}}^{p}(\:\|u\|_{W^{3, p}_{\beta, \gamma, t}}^{p} + \|v\|_{W^{3, p}_{\beta, \gamma, t}}^{p}).
\end{align}
Combining $(\ref{1'})$ and $(\ref{3'})$, we obtain the result.
\end{proof}

For $T$-finite approximate solutions $\widetilde{F}_{t}:X_{t}\to\mathbb{C}^{m}$, we are able to obtain the complete quadratic estimate, due to the Sobolev embedding (Theorem $\ref{sobolevemb}$) $W^{3, p}_{\beta, \gamma, t}\hookrightarrow C^{2}_{0, 2, t}$ for $\beta<0$ and $\gamma<2$. 
\begin{corollary}\label{QuadraticEst}
Suppose the approximate solution $\widetilde{F}_{t}:X_{t}\to\mathbb{C}^{m}$ is $T$-finite, then for $p>m$, $\beta\in (-1, 0)$, $\gamma\in (2-m, 0)$ and $t\in (0, \delta)$, there exists $C>0$ independent of $t$ such that 
\begin{align}
    \|Q_{t}(du) - Q_{t}(dv)\|_{W^{1, p}_{\beta+1, \gamma-2, t}}\leq Ct^{\gamma - 2}\|u - v\|_{W^{3, p}_{\beta, \gamma, t}}(\:\|u\|_{W^{3, p}_{\beta, \gamma, t}} + \|v\|_{W^{3, p}_{\beta, \gamma, t}}\:).
\end{align}
\end{corollary}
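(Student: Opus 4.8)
The plan is to deduce this directly from the integral estimate of Proposition~\ref{estQ} together with the weighted Sobolev embedding of Theorem~\ref{sobolevemb}, the latter being the only place where the $T$-finiteness hypothesis is actually needed. First I would record that the constant $C$ appearing in Proposition~\ref{estQ} is independent of $t\in(0,\delta)$, since it is assembled purely from the $t$-uniform constants $D_k$ of $(\ref{estconn})$ and the elementary algebra carried out in Lemmas~\ref{C0} and~\ref{C1}; no $t$-dependent quantity enters that chain of inequalities.

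Next I would invoke Theorem~\ref{sobolevemb} with $k=2$, $l=1$, source weights $(\beta,\gamma)$, and target weights $(\beta',\gamma')=(0,2)$. Because $\widetilde{F}_t:X_t\to\mathbb{C}^m$ is $T$-finite, $f_t=2\langle\widetilde{F}_t,T\rangle$ is bounded above, so for $\beta<0$ the weight $e^{\beta f_t/2}$ is bounded below by a positive constant and the admissibility condition $\beta'=0\ge\beta$ can be used via $e^{\beta' f_t/2}\le C\,e^{\beta f_t/2}$; likewise $\gamma'=2>0>\gamma$, and the hypothesis $lp>m$ is exactly $p>m$. This produces a continuous embedding $W^{3,p}_{\beta,\gamma,t}(X_t)\hookrightarrow C^2_{0,2,t}(X_t)$ with the $t$-uniform bound $\|w\|_{C^2_{0,2,t}}\le C\,t^{\gamma-2}\|w\|_{W^{3,p}_{\beta,\gamma,t}}$.

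Finally I would substitute this embedding into the two occurrences of the $C^2_{0,2,t}$-norm on the right-hand side of Proposition~\ref{estQ}, applied with $w=u$, $w=v$ and $w=u-v$. Each of the two terms then acquires a factor $t^{\gamma-2}$ and the common structure $\|u-v\|_{W^{3,p}_{\beta,\gamma,t}}\bigl(\|u\|_{W^{3,p}_{\beta,\gamma,t}}+\|v\|_{W^{3,p}_{\beta,\gamma,t}}\bigr)$, and collecting them yields the asserted inequality with a $t$-independent constant. I do not expect a genuine obstacle: the analytic content has already been spent in Proposition~\ref{estQ} (the pointwise quadratic estimates and their integration against the weight $e^{(\beta+1)f_t/2}\rho_t^{-\gamma+2}$) and in Theorem~\ref{sobolevemb} (the $t$-uniform weighted Sobolev embedding). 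The only points requiring care are the $t$-independence of the constant inherited from Proposition~\ref{estQ} and the precise power $t^{\gamma-2}$, which is not a loss because in the contraction-mapping argument of the next subsection $\|u\|_{W^{3,p}_{\beta,\gamma,t}}$ will be taken small compared with a positive power of $t$, so this factor is absorbed.
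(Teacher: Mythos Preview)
Your proposal is correct and follows exactly the route indicated in the paper: the corollary is obtained by plugging the $t$-uniform Sobolev embedding $W^{3,p}_{\beta,\gamma,t}\hookrightarrow C^{2}_{0,2,t}$ of Theorem~\ref{sobolevemb} (valid here since $\beta<0\leq 0$, $\gamma<0<2$, and $p>m$) into both $C^{2}_{0,2,t}$-factors on the right-hand side of Proposition~\ref{estQ}. The paper records only this one sentence as justification, so your write-up is in fact a faithful expansion of the intended argument.
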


\subsection{Perturbation of $T$-finite approximate solutions}

We are ready to solve the nonlinear equation
\begin{align*}
\widetilde{\Theta}_{t}(0) + \mathcal{L}_{g_{t}}u_{t} + Q_{t}(u_{t}) = 0
\end{align*}
on $T$-finite approximate solutions using the Fix Point Theorem.

Let $p>m$, $\beta\in (-1, 0)$ and $\gamma\in (2-m, 0)$. By Proposition $\ref{LinearIsom}$, there exists  a small $\delta>0$ such that for any $t\in (0, \delta)$, there exists $C>0$ independent of $t$ so that for any $u_{t}\in W^{3, p}_{\beta, \gamma, t}(X_{t})$, 
\begin{align*}
\|u_{t}\|_{W^{3, p}_{\beta, \gamma, t}}\leq C\:\|\mathcal{L}_{g_{t}} u_{t}\|_{W^{1,p}_{\beta+1, \gamma-2, t}}.
\end{align*}
Set 
\begin{align*}
\mathcal{G}_{t}: W^{3, p}_{\beta, \gamma, t}(X_{t})\to W^{3, p}_{\beta, \gamma, t}(X_{t}),\quad
\mathcal{G}_{t}(u_{t}):=-\mathcal{L}^{-1}_{g_{t}}(\widetilde{\Theta}_{t}(0) + Q_{t}(du_{t})).
\end{align*}
Then it follows from the above uniform invertibility of $\mathcal{L}_{g_{t}}$ that for $t\in (0, \delta)$, there exists $C>0$ such that
\begin{align}\label{EstG}
\|\mathcal{G}_{t}(u_{t})\|_{W^{3, p}_{\beta, \gamma, t}}\leq C\left(\|\widetilde{\Theta}_{t}(0)\|_{W^{1, p}_{\beta+1, \gamma-2, t}} + \|Q_{t}(du_{t})\|_{W^{1, p}_{\beta+1, \gamma-2, t}}\right),
\end{align}
and for $u_{t}, v_{t}\in W^{3, p}_{\beta, \gamma, t}(X_{t})$,
\begin{align}\label{EstGG}
    \|\mathcal{G}_{t}(u_{t}) - \mathcal{G}_{t}(v_{t})\|_{W^{3, p}_{\beta, \gamma, t}}\leq C \|Q_{t}(du_{t}) - Q_{t}(dv_{t})\|_{W^{1, p}_{\beta+1, \gamma-2, t}}.
\end{align}

Let $B_{t^{\alpha}}:=\{\:u\in W^{3, p}_{\beta, \gamma, t}(X_{t})\::\:\|u\|_{W^{3, p}_{\beta, \gamma, t}}<t^{\alpha}\:\}$. Then $B_{t^{\alpha}}$ is clearly convex. Note that for $\tau<1$ and $\gamma>2-m$,
\begin{align*}
    \tau(2-\gamma) + (1-\tau)m - (2-\gamma) = (1-\tau)(\gamma-(2-m))>0.
\end{align*}
\begin{lemma}\label{Aut}
Let $\tau\in (\frac{m}{m+1}, 1)$. Choose $\alpha$ such that $2-\gamma<\alpha<\tau(2-\gamma) + (1-\tau)m$.  Then there exists $\delta'\in (0, \delta]$ so that for all $t\in (0, \delta')$, we have $\mathcal{G}_{t}(B_{t^{\alpha}})\subseteq B_{t^{\alpha}}$.
\end{lemma}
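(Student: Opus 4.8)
The plan is to combine the a priori bound $(\ref{EstG})$ for $\mathcal{G}_{t}$ with the initial error estimate of Proposition \ref{InitialEst} and the quadratic estimate of Corollary \ref{QuadraticEst}, and then verify that the two resulting powers of $t$ strictly exceed $\alpha$, so that $\|\mathcal{G}_{t}(u)\|_{W^{3,p}_{\beta,\gamma,t}}<t^{\alpha}$ once $t$ is small.

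First I would note that $\widetilde{\Theta}_{t}(0)=\Theta(\widetilde{F}_{t})$, since $\Phi_{\widetilde{F}_{t}}$ restricted to the zero section is $\widetilde{F}_{t}$; hence Proposition \ref{InitialEst} gives $\|\widetilde{\Theta}_{t}(0)\|_{W^{1,p}_{\beta+1,\gamma-2,t}}\le C\,t^{\tau(2-\gamma)+(1-\tau)m}$ for all $t\in(0,\delta)$. Next, for $u\in B_{t^{\alpha}}$ one must first know that $u\in\widetilde{\mathcal{U}}_{t}$, i.e. $\Gamma_{du}\subset U_{\widetilde{F}_{t}}$, so that $\mathcal{G}_{t}(u)$ is defined: by the Sobolev embedding $W^{3,p}_{\beta,\gamma,t}\hookrightarrow C^{2}_{0,2,t}$ of Theorem \ref{sobolevemb} (applicable since $\widetilde{F}_{t}$ is $T$-finite, $p>m$, $\beta<0$, $\gamma<2$) we get $\|u\|_{C^{2}_{0,2,t}}\le C\,t^{\gamma-2}\|u\|_{W^{3,p}_{\beta,\gamma,t}}\le C\,t^{\gamma-2+\alpha}$, and $\gamma-2+\alpha>0$ because $\alpha>2-\gamma$, so this tends to $0$ as $t\to0$. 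Thus, after shrinking $\delta$ if necessary, $B_{t^{\alpha}}\subset\widetilde{\mathcal{U}}_{t}$ and $\|u\|_{C^{2}_{0,2,t}}\le 1$ on $B_{t^{\alpha}}$.

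Then I would apply Corollary \ref{QuadraticEst} with $v=0$, using $Q_{t}(0)=0$ (which follows by evaluating $\widetilde{\Theta}_{t}(u_{t})=\widetilde{\Theta}_{t}(0)+\mathcal{L}_{g_{t}}u_{t}+Q_{t}(du_{t})$ at $u_{t}=0$), to obtain, for $u\in B_{t^{\alpha}}$,
\[
\|Q_{t}(du)\|_{W^{1,p}_{\beta+1,\gamma-2,t}}\le C\,t^{\gamma-2}\|u\|_{W^{3,p}_{\beta,\gamma,t}}^{2}\le C\,t^{\gamma-2+2\alpha}.
\]
Feeding these two bounds into $(\ref{EstG})$ gives
\[
\|\mathcal{G}_{t}(u)\|_{W^{3,p}_{\beta,\gamma,t}}\le C\bigl(t^{\tau(2-\gamma)+(1-\tau)m}+t^{\gamma-2+2\alpha}\bigr)\qquad(u\in B_{t^{\alpha}}).
\]
By the choice of $\alpha$ we have $\tau(2-\gamma)+(1-\tau)m>\alpha$ and $\gamma-2+2\alpha>\alpha$, so the right-hand side is $o(t^{\alpha})$ as $t\to0^{+}$; hence there is $\delta'\in(0,\delta]$ such that $\|\mathcal{G}_{t}(u)\|_{W^{3,p}_{\beta,\gamma,t}}<t^{\alpha}$ for all $t\in(0,\delta')$ and $u\in B_{t^{\alpha}}$, i.e. $\mathcal{G}_{t}(B_{t^{\alpha}})\subseteq B_{t^{\alpha}}$.

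The only genuinely delicate point is the claim $B_{t^{\alpha}}\subset\widetilde{\mathcal{U}}_{t}$: it requires that $U_{\widetilde{F}_{t}}$, as assembled in \S6.1, contains a neighbourhood of the zero section of definite size in the weighted $C^{2}_{0,2,t}$ sense, uniformly in $t$. This is guaranteed by the construction --- fixed-radius Weinstein neighbourhoods on the wings, and the neighbourhood of $tN$ obtained from that of $N$ by the rescaling action $t\cdot(x,\alpha)=(x,t^{2}\alpha)$, which scales compatibly with $\rho_{t}$ --- so the verification is routine. Everything else is just bookkeeping of exponents, and the hypothesis $2-\gamma<\alpha<\tau(2-\gamma)+(1-\tau)m$ (non-vacuous exactly because $\tau(2-\gamma)+(1-\tau)m-(2-\gamma)=(1-\tau)(\gamma-(2-m))>0$) is precisely what makes both exponents beat $\alpha$.
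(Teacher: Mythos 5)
Your proof is correct and follows essentially the same route as the paper: plug the error estimate of Proposition \ref{InitialEst} and the quadratic estimate of Corollary \ref{QuadraticEst} (with $v=0$) into $(\ref{EstG})$, and observe that the hypothesis $2-\gamma<\alpha<\tau(2-\gamma)+(1-\tau)m$ makes both exponents $\tau(2-\gamma)+(1-\tau)m$ and $\gamma-2+2\alpha$ strictly larger than $\alpha$, so the bound is below $t^{\alpha}$ for small $t$. Your additional check that $B_{t^{\alpha}}\subset\widetilde{\mathcal{U}}_{t}$ via the Sobolev embedding is a point the paper leaves implicit, and it is a welcome extra.
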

\begin{proof}
By Proposition $\ref{InitialEst}$ and Corollary $\ref{QuadraticEst}$, for $u\in B_{t^{\alpha}}$, there exists $C>0$ such that
\begin{align*}
\|\mathcal{G}_{t}(u)\|_{W^{3, p}_{\beta, \gamma, t}}\leq C\left( t^{\tau(2-\gamma) + (1-\tau)m - \alpha}+ t^{\alpha- (2-\gamma)}\right)\cdot  t^{\alpha}.
\end{align*}
By the choice of $\alpha$, we may choose $\delta'>0$ small enough so that $t^{\tau(2-\gamma) + (1-\tau)m - \alpha}+ t^{\alpha - (2-\gamma)}<\frac{1}{2C}$ for $t\in (0, \delta')$, then the result follows.
\end{proof}
\begin{lemma}\label{Contraction}
For the same choice of $\alpha$ as in Lemma $\ref{Aut}$, there exists $\delta''\in(0, \delta]$ such that for all $t\in (0, \delta'')$, $\mathcal{G}_{t}: B_{t^{\alpha}}\to B_{t^{\alpha}}$ is a contraction mapping.
\end{lemma}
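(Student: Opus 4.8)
The plan is to derive the contraction property directly from the uniform invertibility of $\mathcal{L}_{g_{t}}$ together with the $T$-finite quadratic estimate, with no new analytic input beyond what has already been established. First I would recall that, by the definition of $\mathcal{G}_{t}$ and Proposition $\ref{LinearIsom}$, the estimate $(\ref{EstGG})$ is already in hand:
\begin{align*}
    \|\mathcal{G}_{t}(u_{t}) - \mathcal{G}_{t}(v_{t})\|_{W^{3, p}_{\beta, \gamma, t}}\leq C\:\|Q_{t}(du_{t}) - Q_{t}(dv_{t})\|_{W^{1, p}_{\beta+1, \gamma-2, t}},
\end{align*}
with $C>0$ independent of $t\in(0, \delta)$.

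Next I would feed in Corollary $\ref{QuadraticEst}$: since $\widetilde{F}_{t}:X_{t}\to\mathbb{C}^{m}$ is $T$-finite, for all $u_{t}, v_{t}\in W^{3, p}_{\beta, \gamma, t}(X_{t})$,
\begin{align*}
    \|Q_{t}(du_{t}) - Q_{t}(dv_{t})\|_{W^{1, p}_{\beta+1, \gamma-2, t}}\leq C\:t^{\gamma-2}\|u_{t} - v_{t}\|_{W^{3, p}_{\beta, \gamma, t}}\left(\|u_{t}\|_{W^{3, p}_{\beta, \gamma, t}} + \|v_{t}\|_{W^{3, p}_{\beta, \gamma, t}}\right),
\end{align*}
again with $C$ uniform in $t$. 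Restricting to $u_{t}, v_{t}\in B_{t^{\alpha}}$, so that the two $W^{3, p}_{\beta, \gamma, t}$-norms on the right are each bounded by $t^{\alpha}$, and combining the two displays, I obtain
\begin{align*}
    \|\mathcal{G}_{t}(u_{t}) - \mathcal{G}_{t}(v_{t})\|_{W^{3, p}_{\beta, \gamma, t}}\leq 2C\:t^{\gamma-2+\alpha}\|u_{t} - v_{t}\|_{W^{3, p}_{\beta, \gamma, t}},
\end{align*}
with $C$ still independent of $t$.

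To finish, note that the choice of $\alpha$ in Lemma $\ref{Aut}$ forces $\alpha>2-\gamma$, so the exponent $\gamma-2+\alpha$ is strictly positive; hence $t^{\gamma-2+\alpha}\to 0$ as $t\to 0$. I would then choose $\delta''\in(0, \delta']$ small enough that $2C\:t^{\gamma-2+\alpha}\leq\tfrac{1}{2}$ for all $t\in(0, \delta'')$, which makes $\mathcal{G}_{t}$ a $\tfrac{1}{2}$-contraction on $B_{t^{\alpha}}$; by Lemma $\ref{Aut}$ it also maps $B_{t^{\alpha}}$ into itself, so the assertion is complete.

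The hard part is not analytic but bookkeeping: one must be sure that every constant $C$ entering the chain above is genuinely independent of $t$, and that the powers of $t$ combine with a positive net exponent. The $t$-uniformity is precisely what Proposition $\ref{LinearIsom}$ supplies for $\mathcal{L}_{g_{t}}^{-1}$ and what the $T$-finiteness hypothesis supplies — via the Sobolev embedding of Theorem $\ref{sobolevemb}$, which underlies Corollary $\ref{QuadraticEst}$ — for the quadratic term; the positivity of the exponent is the content of the inequality $\alpha>2-\gamma$ imposed in Lemma $\ref{Aut}$.
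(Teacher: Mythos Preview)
Your proof is correct and follows essentially the same route as the paper: apply the uniform estimate $(\ref{EstGG})$, then Corollary~$\ref{QuadraticEst}$ with $\|u_t\|+\|v_t\|\le 2t^{\alpha}$ on $B_{t^{\alpha}}$, and use $\alpha>2-\gamma$ to make the Lipschitz factor $t^{\gamma-2+\alpha}$ small. The only cosmetic differences are that the paper absorbs your factor~$2$ into the constant and writes the exponent as $\alpha-(2-\gamma)$.
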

\begin{proof}
By estimate ($\ref{EstGG}$) and Corollary $\ref{QuadraticEst}$, there exists $C'>0$ such that for any $u, v\in B_{t^{\alpha}}$,
\begin{align*}
\|\mathcal{G}_{t}(u) - \mathcal{G}_{t}(v)\|_{W^{3, p}_{\beta, \gamma, t}}&\leq C'\|Q_{t}(du) - Q_{t}(dv)\|_{W^{1, p}_{\beta+1, \gamma-2, t}}\\
&\leq C'\cdot t^{\alpha - (2-\gamma)}\|u - v\|_{W^{3, p}_{\beta, \gamma, t}}.
\end{align*} 
It remains to choose $\delta''>0$ small enough so that for $t\in (0, \delta'')$, $t^{\alpha - (2-\gamma)}<\frac{1}{2C'}$.
\end{proof}
By Lemma $\ref{Aut}$, Lemma $\ref{Contraction}$, we may apply the Fixed Point Theorem to conclude the following.
\begin{proposition}\label{solution}
Let $m\geq 3$, $p>m$, $\beta\in (-1, 0)$, $\gamma\in(2-m, 0)$. Choose $\alpha$ as in Lemma $\ref{Aut}$. Then for $t\in (0, \min\{\delta', \delta''\})$, there exists a unique fix point $u_{t}\in B_{t^{\alpha}}$ of $\mathcal{G}_{t}$, and this $u_{t}\in W^{3, p}_{\beta, \gamma, t}(X_{t})$ is the unique solution for $\widetilde{\Theta}_{t}(u_{t}) = 0$.
\end{proposition}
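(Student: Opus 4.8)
The plan is to solve $\widetilde{\Theta}_{t}(u_{t})=0$ by applying the Banach fixed point theorem to $\mathcal{G}_{t}$ on a small ball, so the proof amounts to assembling the estimates already established. First I would check that $\mathcal{G}_{t}$ is well-defined on $B_{t^{\alpha}}$: for $p>m$ the Sobolev embedding of Theorem~\ref{sobolevemb} gives $\|u\|_{C^{2}_{0,2,t}}\le C\,t^{\gamma-2}\|u\|_{W^{3,p}_{\beta,\gamma,t}}$, so, shrinking $\delta$ if necessary and using $\alpha>2-\gamma$ (so that $t^{\gamma-2}\cdot t^{\alpha}\to 0$), every $u\in B_{t^{\alpha}}$ satisfies $\Gamma_{du}\subset U_{\widetilde{F}_{t}}$; hence $\widetilde{\Theta}_{t}(u)$ is defined and lies in $W^{1,p}_{\beta+1,\gamma-2,t}(X_{t})$ by \S5.2 together with Propositions~\ref{InitialEst} and~\ref{estQ}, and $\mathcal{G}_{t}(u)=-\mathcal{L}_{g_{t}}^{-1}(\widetilde{\Theta}_{t}(0)+Q_{t}(du))\in W^{3,p}_{\beta,\gamma,t}(X_{t})$ is well-defined because, by Proposition~\ref{LinearIsom} with $k=3$, $\mathcal{L}_{g_{t}}$ is an isomorphism onto $W^{1,p}_{\beta+1,\gamma-2,t}(X_{t})$ with a $t$-independent bound on its inverse.

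Next I would pass from the open ball $B_{t^{\alpha}}$ to its closure $\overline{B_{t^{\alpha}}}$, which is a closed, convex subset of the Banach space $W^{3,p}_{\beta,\gamma,t}(X_{t})$, hence a complete metric space in the induced metric. The estimates of Lemma~\ref{Aut} and Lemma~\ref{Contraction} extend to $\overline{B_{t^{\alpha}}}$ by continuity: for $t\in(0,\min\{\delta',\delta''\})$ one has $\mathcal{G}_{t}(\overline{B_{t^{\alpha}}})\subseteq \overline{B_{t^{\alpha}}}$ (in fact, by the proof of Lemma~\ref{Aut}, $\mathcal{G}_{t}$ lands in the ball of radius $\tfrac{1}{2}t^{\alpha}$), and $\|\mathcal{G}_{t}(u)-\mathcal{G}_{t}(v)\|_{W^{3,p}_{\beta,\gamma,t}}\le \tfrac{1}{2}\|u-v\|_{W^{3,p}_{\beta,\gamma,t}}$. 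The contraction mapping principle then yields a unique fixed point $u_{t}\in\overline{B_{t^{\alpha}}}$ of $\mathcal{G}_{t}$, and since $\mathcal{G}_{t}$ maps into the open ball $B_{t^{\alpha}}$ we in fact have $u_{t}\in B_{t^{\alpha}}$.

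It remains to identify fixed points with solutions. Unwinding the definition of $\mathcal{G}_{t}$, the identity $u_{t}=\mathcal{G}_{t}(u_{t})$ is equivalent to $\mathcal{L}_{g_{t}}u_{t}=-(\widetilde{\Theta}_{t}(0)+Q_{t}(du_{t}))$, i.e. to $\widetilde{\Theta}_{t}(0)+\mathcal{L}_{g_{t}}u_{t}+Q_{t}(du_{t})=0$; by the expansion $\widetilde{\Theta}_{t}(u_{t})=\widetilde{\Theta}_{t}(0)+\mathcal{L}_{g_{t}}u_{t}+Q_{t}(du_{t})$ recorded in \S5.2, this is precisely $\widetilde{\Theta}_{t}(u_{t})=0$. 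Uniqueness of the fixed point then gives uniqueness of the solution within $B_{t^{\alpha}}$, and this $u_{t}$ produces the sought Lagrangian translating soliton $\Phi_{\widetilde{F}_{t}}(\Gamma_{du_{t}})$. I do not expect a genuine obstacle here: all the hard analysis is already contained in Propositions~\ref{InitialEst}, \ref{LinearIsom} and Corollary~\ref{QuadraticEst}; the only points needing a little care are the additional shrinking of $\delta$ that keeps $\Gamma_{du}$ inside the Lagrangian neighbourhood uniformly over $B_{t^{\alpha}}$, and the routine open/closed-ball bookkeeping required to invoke completeness.
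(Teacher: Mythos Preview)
Your proposal is correct and follows the same approach as the paper, which simply invokes Lemma~\ref{Aut} and Lemma~\ref{Contraction} and applies the Fixed Point Theorem without further comment. Your version is more careful than the paper's one-line argument in that you spell out the well-definedness of $\mathcal{G}_{t}$ on $B_{t^{\alpha}}$, the open/closed-ball bookkeeping needed for completeness, and the equivalence between fixed points and solutions of $\widetilde{\Theta}_{t}(u_{t})=0$ (note only that the Taylor expansion you cite is recorded in \S6.2, not \S5.2).
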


We can finally prove the existence of smooth Lagrangian translating soliton near a $T$-finite approximate solution.
\begin{theorem}
Let $m\geq 3$, $T = (0, \cdots, 0, -1)$, and let $\widetilde{F}:L = \mathbb{R}_{0}^{m}\cup\mathbb{R}_{\phi}^{m}\to\mathbb{C}^{m}$ be the $T$-finite Lagrangian translating soliton with an isolated conical singularity defined in \S 3.4. Let $\widetilde{F}_{t}: X_{t}\to\mathbb{C}^{m}$ be a family of approximate solutions which is constructed by gluing Lawlor necks to the isolated conical singularity of $\widetilde{F}(L)$, as demonstrated in \S4.1. Then there exists $\delta>0$ and a $1$-parameter family of Lagrangian translating solitons $\{F_{t}:X_{t}\to\mathbb{C}^{m}\}_{t\in (0, \delta)}$ satisfying $H_{F_{t}} + T^{\perp} = 0$ and  $F_{t}(X_{t})\to \widetilde{F}(L)$ as $t\to 0$ as currents.
\end{theorem}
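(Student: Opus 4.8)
The plan is to solve the nonlinear equation
$\widetilde{\Theta}_t(0)+\mathcal{L}_{g_t}u_t+Q_t(du_t)=0$
for small $t$ by the Banach contraction principle already assembled above, and then to upgrade the resulting weak solution to a smooth, embedded Lagrangian translating soliton and read off the asserted convergence of currents. First I would fix $p>m$, $\beta\in(-1,0)$, $\gamma\in(2-m,0)$ and $\tau\in(\tfrac{m}{m+1},1)$, and choose $\alpha$ with $2-\gamma<\alpha<\tau(2-\gamma)+(1-\tau)m$ as in Lemma~\ref{Aut}. Since $\widetilde{F}_t$ is $T$-finite (Definition~\ref{Tfinite}), the error estimate (Proposition~\ref{InitialEst}), the uniform invertibility of the linearised operator (Proposition~\ref{LinearIsom}), the quadratic estimate (Corollary~\ref{QuadraticEst}) and the self-map and contraction properties of $\mathcal{G}_t$ (Lemmas~\ref{Aut} and~\ref{Contraction}) all apply, so Proposition~\ref{solution} produces, for each $t$ in some interval $(0,\delta)$, a unique $u_t\in B_{t^{\alpha}}\subset W^{3,p}_{\beta,\gamma,t}(X_t)$ with $\widetilde{\Theta}_t(u_t)=0$ and $\|u_t\|_{W^{3,p}_{\beta,\gamma,t}}\le t^{\alpha}$. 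So the first step is simply to invoke Proposition~\ref{solution}.

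Next I would run elliptic regularity. Since $p>m$, Sobolev embedding (Theorem~\ref{sobolevemb}) places $u_t$ in $C^{2,\alpha'}_{\mathrm{loc}}(X_t)$, and the equation $\widetilde{\Theta}_t(u_t)=0$ is a second-order equation whose linearisation at $u_t$ has, for $u_t$ small in $C^2$, the same principal part as $\mathcal{L}_{g_t}$ and is therefore uniformly elliptic on compact subsets of $X_t$; a standard bootstrap on the wings, the transition region and the neck region — where the metric $g_t$ is respectively Euclidean, a bounded perturbation of a cone metric, and the smooth AC metric of $tN$ — then gives $u_t\in C^{\infty}(X_t)$. After shrinking $\delta$ so that the graph $\Gamma_{du_t}$ of the exact $1$-form $du_t$ lies inside $U_{\widetilde{F}_t}$ (which is part of $u_t\in\widetilde{\mathcal{U}}_t$), this graph is Lagrangian for $\Phi^{*}_{\widetilde{F}_t}\omega_0=\hat{\omega}$, so $F_t:=\Phi_{\widetilde{F}_t}\circ du_t\colon X_t\to\mathbb{C}^m$ is a smooth Lagrangian immersion. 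The identity $0=\widetilde{\Theta}_t(u_t)=\Theta(F_t)=*F_t^{*}\im\Omega_f$ says exactly that $F_t$ is an $f$-special Lagrangian of phase $0$, i.e. $H_{F_t}+T^{\perp}=0$ by \S2.2.

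It remains to check embeddedness and the convergence of currents. Embeddedness is automatic: $F_t$ is the composition of the embedding $X_t\hookrightarrow U_{\widetilde{F}_t}$ onto the graph of $du_t$ with the embedding $\Phi_{\widetilde{F}_t}$, so $F_t$ is an embedding, and $F_t(X_t)$ is diffeomorphic to $X_t\cong\mathbb{R}^m_0\#\mathbb{R}^m_{\phi}\cong S^{m-1}\times\mathbb{R}$. For the convergence: by the construction in \S4.1 one has $\widetilde{F}_t(X_t)\to\widetilde{F}(L)=L_0\cup P_{(\phi,\lambda)}(L_0)$ as currents as $t\to 0$, since off any fixed ball about the singular point $p$ one has $\widetilde{F}_t=\widetilde{F}$ for $t$ small while the neck and transition regions have vanishing area and collapse to $p$. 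On the other hand $F_t$ is $C^0$-close to $\widetilde{F}_t$: the quantity $\sup_{X_t}|F_t-\widetilde{F}_t|$ is bounded, via Theorem~\ref{sobolevemb} and $\|u_t\|_{W^{3,p}_{\beta,\gamma,t}}\le t^{\alpha}$, by a positive power of $t$ (here one uses $\alpha>2-\gamma$), hence tends to $0$; and $F_t(X_t)$ and $\widetilde{F}_t(X_t)$ have uniformly bounded mass on compact sets, $F_t(X_t)$ being a small graph over $\widetilde{F}_t(X_t)$, whose masses on compact sets converge to those of $\widetilde{F}(L)$. It follows that $F_t(X_t)\to\widetilde{F}(L)$ weakly, which is the assertion of the theorem.

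I expect the genuinely delicate points to be: (i) the elliptic bootstrap across the transition region, where for fixed $t$ the operator is a bounded perturbation of the Laplacian of a cone metric and the argument, while classical, must be organised so as to yield full smoothness up to the junctions; and (ii) converting the $t$-weighted smallness $\|u_t\|_{W^{3,p}_{\beta,\gamma,t}}=O(t^{\alpha})$ into honest geometric (Hausdorff and mass) closeness of $F_t(X_t)$ to $\widetilde{F}(L)$, for which one leans on the precise $t$-scaling in the weighted Sobolev embedding (Theorem~\ref{sobolevemb}) together with the inequality $\alpha>2-\gamma$. The substantive analytic work — the $t$-uniform invertibility of $\mathcal{L}_{g_t}$ between the interpolated weighted spaces $W^{k,p}_{\beta,\gamma,t}$ — has already been carried out in Proposition~\ref{LinearIsom}, so the present theorem is essentially a matter of packaging.
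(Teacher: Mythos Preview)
Your proposal is correct and follows essentially the same route as the paper: invoke Proposition~\ref{solution} to produce $u_t\in B_{t^{\alpha}}$, upgrade to $C^{\infty}$ by elliptic regularity (the paper simply cites \cite[Theorem~3.56]{Aubin} rather than sketching a bootstrap), define $F_t=\Phi_{\widetilde{F}_t}\circ du_t$, and deduce current convergence from $C^1$-closeness to $\widetilde{F}_t$ together with $\widetilde{F}_t(X_t)\to\widetilde{F}(L)$. Your treatment of embeddedness and of the current convergence is in fact more detailed than the paper's, which dispatches both in a single sentence.
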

\begin{proof}
By Proposition $\ref{solution}$, we can solve $\widetilde{\Theta}_{t}(u_{t}) = 0$ in $B_{t^{\alpha}}\subset W^{3, p}_{\beta, \gamma, t}(X_{t})$ for $p>m$, $\beta\in (-1, 0)$ and $\gamma\in (2-m, 0)$. By Sobolev embedding (Theorem $\ref{sobolevemb}$), the solution $u_{t}$ is in $C^{2}_{\beta, \gamma, t}(X_{t})$. Now $u_{t}$ is a $C^{2}$ solution of a nonlinear second order equation $\Theta'_{t}[x, du_{t}(x), \nabla du_{t}(x)] = 0$ (see \S 6.2) where $\Theta'_{t}$ is smooth in its arguments, so we may apply the regularity result \cite[Theorem 3.56]{Aubin} to conclude that $u_{t}\in C^{\infty}(X_{t})$. Hence $F_{t}:=\Phi_{\widetilde{F}_{t}}\circ du_{t}:X_{t}\to\mathbb{C}^{m}$ defines a $1$-parameter family of smooth Lagrangian translating soliton with phase $0$, which is $t^{\alpha}$-close to $\widetilde{F}_{t}$ in $C^{1}$ sense for some $\alpha>0$ as chosen in Lemma $\ref{Aut}$. By construction, $\widetilde{F}_{t}(X_{t})\to \widetilde{F}(L)$ in current sense as $t\to 0$, so the same property holds for $F_{t}(X_{t})$. 
\end{proof}

\bibliographystyle{alpha}
\bibliography{LagTranslator.bib}

@article {B,
    AUTHOR = {Behrndt, Tapio},
     TITLE = {On the {C}auchy problem for the heat equation on {R}iemannian
              manifolds with conical singularities},
   JOURNAL = {Q. J. Math.},
  FJOURNAL = {The Quarterly Journal of Mathematics},
    VOLUME = {64},
      YEAR = {2013},
    NUMBER = {4},
     PAGES = {981--1007},
      ISSN = {0033-5606},
   MRCLASS = {35R01 (35K15)},
  MRNUMBER = {3151600},
MRREVIEWER = {Mohammed El A\"\i di},
       URL = {https://doi.org/10.1093/qmath/has016},
}

@phdthesis{Marshall,
  author       = {Marshall, Stephen}, 
  title        = {Deformations of special Lagrangian submanifolds},
  school       = {University of Oxford},
  year         = 2002,
}

@article {HL,
    AUTHOR = {Harvey, Reese and Lawson, Jr., H. Blaine},
     TITLE = {Calibrated {G}eometries},
   JOURNAL = {Acta Math.},
  FJOURNAL = {Acta Mathematica},
    VOLUME = {148},
      YEAR = {1982},
     PAGES = {47--157},
      ISSN = {0001-5962},
   MRCLASS = {53C40 (49F20 53C65 58E15 58G30)},
  MRNUMBER = {666108},
       DOI = {10.1007/BF02392726},
       URL = {https://doi.org/10.1007/BF02392726},
}

@article {Lawlor,
    AUTHOR = {Lawlor, Gary},
     TITLE = {The angle criterion},
   JOURNAL = {Invent. Math.},
  FJOURNAL = {Inventiones Mathematicae},
    VOLUME = {95},
      YEAR = {1989},
    NUMBER = {2},
     PAGES = {437--446},
      ISSN = {0020-9910},
   MRCLASS = {49F10 (53A10)},
  MRNUMBER = {974911},
MRREVIEWER = {Themistocles M. Rassias},
       DOI = {10.1007/BF01393905},
       URL = {https://doi.org/10.1007/BF01393905},
}

@article {NevesSing,
    AUTHOR = {Neves, Andr\'e},
     TITLE = {Singularities of {L}agrangian mean curvature flow:
              zero-{M}aslov class case},
   JOURNAL = {Invent. Math.},
  FJOURNAL = {Inventiones Mathematicae},
    VOLUME = {168},
      YEAR = {2007},
    NUMBER = {3},
     PAGES = {449--484},
      ISSN = {0020-9910},
   MRCLASS = {53C44 (53D12)},
  MRNUMBER = {2299559},
MRREVIEWER = {Henri Anciaux},
       DOI = {10.1007/s00222-007-0036-3},
       URL = {https://doi.org/10.1007/s00222-007-0036-3},
}

@article {JoyceCS1,
    AUTHOR = {Joyce, Dominic},
     TITLE = {Special {L}agrangian submanifolds with isolated conical
              singularities. {I}. {R}egularity},
   JOURNAL = {Ann. Global Anal. Geom.},
  FJOURNAL = {Annals of Global Analysis and Geometry},
    VOLUME = {25},
      YEAR = {2004},
    NUMBER = {3},
     PAGES = {201--251},
      ISSN = {0232-704X},
   MRCLASS = {53C38 (32Q25)},
  MRNUMBER = {2053761},
MRREVIEWER = {Spiro Karigiannis},
       DOI = {10.1023/B:AGAG.0000023229.72953.57},
       URL = {https://doi.org/10.1023/B:AGAG.0000023229.72953.57},
}

@article {JLT,
    AUTHOR = {Joyce, Dominic and Lee, Yng-Ing and Tsui, Mao-Pei},
     TITLE = {Self-similar solutions and translating solitons for
              {L}agrangian mean curvature flow},
   JOURNAL = {J. Differential Geom.},
  FJOURNAL = {Journal of Differential Geometry},
    VOLUME = {84},
      YEAR = {2010},
    NUMBER = {1},
     PAGES = {127--161},
      ISSN = {0022-040X},
   MRCLASS = {53C44 (53D12)},
  MRNUMBER = {2629511},
MRREVIEWER = {Yuguang Zhang},
       URL = {http://projecteuclid.org/euclid.jdg/1271271795},
}

@ARTICLE{SmoWhitney,
   author = {{Savas-Halilaj}, A. and {Smoczyk}, K.},
    title = "{Lagrangian mean curvature flow of Whitney spheres}",
  journal = {ArXiv e-prints},
archivePrefix = "arXiv",
   eprint = {1802.06304},
 primaryClass = "math.DG",
 keywords = {Mathematics - Differential Geometry, Mathematics - Analysis of PDEs},
     year = 2018,
    month = feb,
   adsurl = {http://adsabs.harvard.edu/abs/2018arXiv180206304S},
  adsnote = {Provided by the SAO/NASA Astrophysics Data System}
}

@article {IJO,
    AUTHOR = {Imagi, Yohsuke and Joyce, Dominic and Oliveira dos Santos,
              Joana},
     TITLE = {Uniqueness results for special {L}agrangians and {L}agrangian
              mean curvature flow expanders in {$\Bbb{C}^m$}},
   JOURNAL = {Duke Math. J.},
  FJOURNAL = {Duke Mathematical Journal},
    VOLUME = {165},
      YEAR = {2016},
    NUMBER = {5},
     PAGES = {847--933},
      ISSN = {0012-7094},
   MRCLASS = {53D12 (53C42 53C44 53D40)},
  MRNUMBER = {3482334},
MRREVIEWER = {Jason Dean Lotay},
       DOI = {10.1215/00127094-3167275},
       URL = {https://doi.org/10.1215/00127094-3167275},
}

@article {LL,
    AUTHOR = {Lee, Yng-Ing and Lue, Yang-Kai},
     TITLE = {The stability of self-shrinkers of mean curvature flow in
              higher co-dimension},
   JOURNAL = {Trans. Amer. Math. Soc.},
  FJOURNAL = {Transactions of the American Mathematical Society},
    VOLUME = {367},
      YEAR = {2015},
    NUMBER = {4},
     PAGES = {2411--2435},
      ISSN = {0002-9947},
   MRCLASS = {53C44 (35C06 35K55 35K93)},
  MRNUMBER = {3301868},
MRREVIEWER = {Yongbing Zhang},
       DOI = {10.1090/S0002-9947-2014-05969-2},
       URL = {http://dx.doi.org/10.1090/S0002-9947-2014-05969-2},
}

@article {S,
    AUTHOR = {Shahriyari, Leili},
     TITLE = {Translating graphs by mean curvature flow},
   JOURNAL = {Geom. Dedicata},
  FJOURNAL = {Geometriae Dedicata},
    VOLUME = {175},
      YEAR = {2015},
     PAGES = {57--64},
      ISSN = {0046-5755},
   MRCLASS = {53C44 (53A10)},
  MRNUMBER = {3323629},
MRREVIEWER = {Ana Mar\~A-a Lerma},
       DOI = {10.1007/s10711-014-0028-6},
       URL = {http://dx.doi.org/10.1007/s10711-014-0028-6},
}

@article {LM,
    AUTHOR = {Lockhart Robert B. and McOwen, Robert C.},
     TITLE = {Elliptic differential operators on noncompact manifolds},
   JOURNAL = {Ann. Scuola Norm. Sup. Pisa Cl. Sci. (4)},
  FJOURNAL = {Annali della Scuola Normale Superiore di Pisa. Classe di
              Scienze. Serie IV},
    VOLUME = {12},
      YEAR = {1985},
    NUMBER = {3},
     PAGES = {409--447},
      ISSN = {0391-173X},
   MRCLASS = {58G15 (47A53 47F05 58G10)},
  MRNUMBER = {837256},
MRREVIEWER = {William Margulies},
       URL = {http://www.numdam.org/item?id=ASNSP_1985_4_12_3_409_0},
}

@article{PaciniGlue,
author = {Pacini, Tommaso},
year = {2011},
month = {09},
pages = {},
title = {{S}pecial {L}agrangian conifolds, {II}: {G}luing constructions in $\mathbb{C}^m$},
volume = {107},
journal = {Proceedings of the London Mathematical Society},
doi = {10.1112/plms/pds092}
}

@article{PaciniUnifEst,
author = {Pacini, Tommaso},
year = {2010},
month = {05},
pages = {},
title = {Desingularizing isolated conical singularities: {U}niform estimates via
weighted {S}obolev spaces},
volume = {21},
journal = {Communications in Analysis and Geometry},
doi = {10.4310/CAG.2013.v21.n1.a3}
}

@book{krylov,
  title={Lectures on {E}lliptic and {P}arabolic {E}quations in {S}obolev {S}paces},
  author={Krylov, N.V.},
  isbn={9780821846841},
  lccn={2008016051},
  series={Graduate Studies in Mathematics, Graduate Studies in Mathema},
  url={https://books.google.com.tw/books?id=8RzqBwAAQBAJ},
  year={2008},
  publisher={American Mathematical Society}
}

@article{Weinstein,
title = "Symplectic manifolds and their {L}agrangian submanifolds",
journal = "Advances in Mathematics",
volume = "6",
number = "3",
pages = "329 - 346",
year = "1971",
issn = "0001-8708",
doi = "https://doi.org/10.1016/0001-8708(71)90020-X",
url = "http://www.sciencedirect.com/science/article/pii/000187087190020X",
author = "Alan Weinstein"
}

@article{WBfmin,
    author = {SU, Wei-Bo},
    title = "{f-Minimal Lagrangian Submanifolds in Kähler Manifolds with Real Holomorphy Potentials}",
    journal = {International Mathematics Research Notices},
    volume = {2021},
    number = {4},
    pages = {2539-2564},
    year = {2019},
    month = {12},
    abstract = "{The aim of this paper is to study variational properties for \\$f\\$-minimal Lagrangian submanifolds in Kähler manifolds with real holomorphy potentials. Examples of submanifolds of this kind including minimal Lagrangians and soliton solutions for Lagrangian mean curvature flow (LMCF). We derive 2nd variation formula for \\$f\\$-minimal Lagrangians as a generalization of Chen and Oh’s formula for minimal Lagrangians. As a corollary, we obtain stability of expanding and translating solitons for LMCF. We also define calibrated submanifolds with respect to \\$f\\$-volume in gradient steady Kähler–Ricci solitons as generalizations of special Lagrangians and translating solitons for LMCF and show that these submanifolds are necessarily noncompact. As a special case, we study the exact deformation vector fields on Lagrangian translators. Finally we discuss some generalizations and related problems.}",
    issn = {1073-7928},
    doi = {10.1093/imrn/rnz198},
    url = {https://doi.org/10.1093/imrn/rnz198},
    eprint = {https://academic.oup.com/imrn/article-pdf/2021/4/2539/36271456/rnz198.pdf},
}

@article {Joyce3,
    AUTHOR = {Joyce, Dominic},
     TITLE = {Special {L}agrangian submanifolds with isolated conical
              singularities. {III}. {D}esingularization, the unobstructed
              case},
   JOURNAL = {Ann. Global Anal. Geom.},
  FJOURNAL = {Annals of Global Analysis and Geometry},
    VOLUME = {26},
      YEAR = {2004},
    NUMBER = {1},
     PAGES = {1--58},
      ISSN = {0232-704X},
   MRCLASS = {53C38 (32Q25 53D12)},
  MRNUMBER = {2054578},
MRREVIEWER = {Spiro Karigiannis},
       DOI = {10.1023/B:AGAG.0000023231.31950.cc},
       URL = {https://doi.org/10.1023/B:AGAG.0000023231.31950.cc},
}

@article{Trident,
author = { Xuan   Hien   Nguyen },
title = {Translating Tridents},
journal = {Communications in Partial Differential Equations},
volume = {34},
number = {3},
pages = {257-280},
year  = {2009},
publisher = {Taylor & Francis},
doi = {10.1080/03605300902768685},

URL = { 
        https://doi.org/10.1080/03605300902768685
    
},
eprint = { 
        https://doi.org/10.1080/03605300902768685
    
}

}

@article {CL,
    AUTHOR = {Castro, Ildefonso and Lerma, Ana M.},
     TITLE = {Translating solitons for {L}agrangian mean curvature flow in
              complex {E}uclidean plane},
   JOURNAL = {Internat. J. Math.},
  FJOURNAL = {International Journal of Mathematics},
    VOLUME = {23},
      YEAR = {2012},
    NUMBER = {10},
     PAGES = {1250101, 16},
      ISSN = {0129-167X},
   MRCLASS = {53C42 (53B25 53D12)},
  MRNUMBER = {2999046},
MRREVIEWER = {Joeri Van der Veken},
       DOI = {10.1142/S0129167X12501017},
       URL = {https://doi.org/10.1142/S0129167X12501017},
}

@book {Aubin,
    AUTHOR = {Aubin, Thierry},
     TITLE = {Some nonlinear problems in {R}iemannian geometry},
    SERIES = {Springer Monographs in Mathematics},
 PUBLISHER = {Springer-Verlag, Berlin},
      YEAR = {1998},
     PAGES = {xviii+395},
      ISBN = {3-540-60752-8},
   MRCLASS = {58-02 (35J60 35N99 46N20 53C21 53C25 53C55 58E20)},
  MRNUMBER = {1636569},
MRREVIEWER = {John M. Lee},
       DOI = {10.1007/978-3-662-13006-3},
       URL = {https://doi.org/10.1007/978-3-662-13006-3},
}

@article {JoyceConj,
    AUTHOR = {Joyce, Dominic},
     TITLE = {Conjectures on {B}ridgeland stability for {F}ukaya categories
              of {C}alabi-{Y}au manifolds, special {L}agrangians, and
              {L}agrangian mean curvature flow},
   JOURNAL = {EMS Surv. Math. Sci.},
  FJOURNAL = {EMS Surveys in Mathematical Sciences},
    VOLUME = {2},
      YEAR = {2015},
    NUMBER = {1},
     PAGES = {1--62},
      ISSN = {2308-2151},
   MRCLASS = {53D12 (53C44 53D37)},
  MRNUMBER = {3354954},
MRREVIEWER = {Hai-Long Her},
       DOI = {10.4171/EMSS/8},
       URL = {https://doi.org/10.4171/EMSS/8},
}

@article {Transsurface2017,
    AUTHOR = {D\'{a}vila, Juan and del Pino, Manuel and Nguyen, Xuan Hien},
     TITLE = {Finite topology self-translating surfaces for the mean
              curvature flow in {$\Bbb{R}^3$}},
   JOURNAL = {Adv. Math.},
  FJOURNAL = {Advances in Mathematics},
    VOLUME = {320},
      YEAR = {2017},
     PAGES = {674--729},
      ISSN = {0001-8708},
   MRCLASS = {53C44 (53A05)},
  MRNUMBER = {3709119},
MRREVIEWER = {David James Hartley},
       DOI = {10.1016/j.aim.2017.09.014},
       URL = {https://doi.org/10.1016/j.aim.2017.09.014},
}

@article {Transsurface2015,
    AUTHOR = {Nguyen, Xuan Hien},
     TITLE = {Doubly periodic self-translating surfaces for the mean
              curvature flow},
   JOURNAL = {Geom. Dedicata},
  FJOURNAL = {Geometriae Dedicata},
    VOLUME = {174},
      YEAR = {2015},
     PAGES = {177--185},
      ISSN = {0046-5755},
   MRCLASS = {53C44},
  MRNUMBER = {3303047},
MRREVIEWER = {Yecheng Zhu},
       DOI = {10.1007/s10711-014-0011-2},
       URL = {https://doi.org/10.1007/s10711-014-0011-2},
}

@article {Transsurface2013,
    AUTHOR = {Nguyen, Xuan Hien},
     TITLE = {Complete embedded self-translating surfaces under mean
              curvature flow},
   JOURNAL = {J. Geom. Anal.},
  FJOURNAL = {Journal of Geometric Analysis},
    VOLUME = {23},
      YEAR = {2013},
    NUMBER = {3},
     PAGES = {1379--1426},
      ISSN = {1050-6926},
   MRCLASS = {53C44 (35K93)},
  MRNUMBER = {3078359},
MRREVIEWER = {Ana Mar\'{\i}a Lerma},
       DOI = {10.1007/s12220-011-9292-y},
       URL = {https://doi.org/10.1007/s12220-011-9292-y},
}

@misc{conlon2020steady,
      title={Steady gradient K\"ahler-Ricci solitons on crepant resolutions of Calabi-Yau cones}, 
      author={Ronan J. Conlon and Alix Deruelle},
      year={2020},
      eprint={2006.03100},
      archivePrefix={arXiv},
      primaryClass={math.DG}
}

@misc{biquard2017steady,
      title={Steady K\"ahler-Ricci solitons on crepant resolutions of finite quotients of $\mathbb{C}^n$}, 
      author={Olivier Biquard and Heather Macbeth},
      year={2017},
      eprint={1711.02019},
      archivePrefix={arXiv},
      primaryClass={math.DG}
}

\end{document}